\newtheorem{theorem}{Theorem}[section]
\newtheorem{lemma}[theorem]{Lemma}
\newtheorem{cor}[theorem]{Corollary}
\newtheorem{drconj}[theorem]{Dyson's Rank Conjecture}
\newtheorem{prop}[theorem]{Proposition}
\theoremstyle{definition}
\newtheorem{definition}[theorem]{Definition}
\newtheorem{example}[theorem]{Example}
\theoremstyle{remark}
\newtheorem*{remark}{Remark}
\numberwithin{equation}{section}
\newcommand{\SL}{\mbox{SL}}
\newcommand{\GL}{\mbox{GL}}
\newcommand{\Parans}[1]{\left(#1\right)}
\newcommand\leg[2]{\genfrac{(}{)}{}{}{#1}{#2}} 
\newcommand\Mac[3]{M\left(\frac{#1}{#2};#3\right)}
\newcommand\Nac[3]{N\left(\frac{#1}{#2};#3\right)}
\newcommand\Lpar[1]{\left(#1\right)}
\newcommand\Mell[3]{\mathcal{M}\left(\frac{#1}{#2};#3\right)}
\newcommand\Nell[3]{\mathcal{N}\left(\frac{#1}{#2};#3\right)}
\newcommand\Fell[4]{\mathcal{F}_{#1}\left(\frac{#2}{#3};#4\right)}
\newcommand\SFell[4]{\mathcal{F}_{#1}^{*}\left(\frac{#2}{#3};#4\right)}
\newcommand\Gell[4]{\mathcal{G}_{#1}\left(\frac{#2}{#3};#4\right)}
\newcommand\Jpz[1]{\mathcal{J}\left(\frac{1}{p};{#1}\right)}
\newcommand\Jdpz[1]{\mathcal{J}\left(\frac{d}{p};{#1}\right)}
\newcommand\JSdpz[1]{\mathcal{J}^{*}\left(\frac{d}{p};{#1}\right)}
\newcommand\Rpz[1]{\mathcal{R}_p\left(#1\right)}
\newcommand\Rpzb[2]{\mathcal{R}_{#1}\left(#2\right)}
\newcommand\THA[3]{\Theta_1\Lpar{\frac{#1}{#2};#3}}
\newcommand\twidit[1]{\overset {\text{\lower 3pt\hbox{$\sim$}}}#1}
\newcommand\dtwidit[1]{\overset {\text{\lower 6pt\hbox{$\sim$}}}#1}
\newcommand\Wtwid{\overset {\text{\lower 3pt\hbox{$\sim$}}}W}
\newcommand\gtwid{\overset {\text{\lower 3pt\hbox{$\sim$}}}g}
\newcommand\ttwid{\overset {\text{\lower 3pt\hbox{$\sim$}}}\theta}
\newcommand\mutwid{\overset {\text{\lower 3pt\hbox{$\sim$}}}\mu}
\newcommand\zcon{\overline{z}}
\newcommand\AMat{\begin{pmatrix} a & b \\ c & d \end{pmatrix}}
\newcommand\SMat{\begin{pmatrix} 0 & -1 \\ 1 & 0 \end{pmatrix}}
\newcommand\TMat{\begin{pmatrix} 1 & 1 \\ 0 & 1 \end{pmatrix}}
\newcommand\FL[1]{\left\lfloor#1\right\rfloor}
\newcommand\CL[1]{\left\lceil#1\right\rceil}
\newcommand\strokeb[2]{\,\left\arrowvert\,\left[#1\right]_#2\right.}
\newcommand\stroke[3]{#1\,\left\arrowvert\,\left[#2\right]_{#3}\right.}
\newcommand\ord{\mbox{ord}}         
\newcommand\ORD{\mbox{ORD}}         
\newcommand\hord{\mbox{ord}_{\mbox{\scriptsize holo}}}         
\newcommand\mylabel[1]{\label{#1}}
\newcommand\mybibitem[1]{\bibitem{#1}}
\newcommand\thm[1]{\ref{thm:#1}}
\newcommand\lem[1]{\ref{lem:#1}}
\newcommand\corol[1]{\ref{cor:#1}}
\newcommand\propo[1]{\ref{propo:#1}}
\newcommand\eqn[1]{(\ref{eq:#1})}
\newcommand\refdef[1]{\ref{def:#1}}
\newcommand\subsect[1]{\ref{subsec:#1}}
\begin{document}
\newcommand{\beqs}{\begin{equation*}}
\newcommand{\eeqs}{\end{equation*}}
\newcommand{\beq}{\begin{equation}}
\newcommand{\eeq}{\end{equation}}
\renewcommand{\MR}[1]{\href{http://www.ams.org/mathscinet-getitem?mr={#1}}{MR{#1}}}
\title[Dyson rank function symmetries]
{New symmetries for Dyson's rank function}

\author{F. G. Garvan}
\address{Department of Mathematics, University of Florida, Gainesville,
FL 32611-8105}
\email{fgarvan@ufl.edu}
\author{Rishabh Sarma}
\address{Department of Mathematics, University of Florida, Gainesville,
FL 32611-8105}
\email{rishabh.sarma@ufl.edu}

\subjclass[2010]{05A19, 11B65, 11F11, 11F37, 11P82, 11P83, 33D15}

\date{\today}                   


\keywords{Dyson's rank function, Maass forms, mock theta functions, partitions,
Mordell integral}

\begin{abstract}
At the 1987 Ramanujan Centenary meeting Dyson asked for a coherent 
group-theoretical structure for Ramanujan's mock theta functions 
analogous to Hecke's theory of modular forms.  Many of Ramanujan's mock theta functions can be written in terms of $R(\zeta_p,q)$, where $R(z,q)$ is  the two-variable generating function of Dyson's rank function and  $\zeta_p$ is a primitive $p$-th root of unity. 
In his lost notebook Ramanujan gives the $5$-dissection of $R(\zeta_5,q)$. This result is related to Dyson's famous rank conjecture which was proved by Atkin and Swinnerton-Dyer. In 2016 the first author showed that there is an analogous result for the $p$-dissection of $R(\zeta_p,q)$ when $p$ is any prime greater than $3$, by extending work of Bringmann and Ono, and Ahlgren and Treneer. It was also shown how the group $\Gamma_1(p)$ acts on the elements of the $p$-dissection of $R(\zeta_p,q)$. We extend this to the group $\Gamma_0(p)$, thus revealing new and surprising symmetries for Dyson's rank function.
\end{abstract}

\maketitle
\section{Introduction}
\mylabel{sec:intro}

Let $p(n)$ denote the number of partitions of $n$. The following are
Ramanujan's famous partition congruences:
\begin{align*}
p(5n+4) &\equiv 0 \pmod{5},\\
p(7n+5) &\equiv 0 \pmod{7},\\
p(11n+6) &\equiv 0 \pmod{11}.
\end{align*}
In 1944, Dyson \cite{Dy44} sought a simple combinatorial explanation for
these congruences. He defined the rank of a partition as the largest part minus 
the number of parts and conjectured that the rank mod $5$ divided the partitions 
of $5n+4$ into $5$ equal classes and that the rank mod $7$ divided the partitions of 
$7n+5$ into $7$ equal classes. His mod $5$ and $7$ rank conjectures
were proved by Atkin and Swinnerton-Dyer \cite{At-Sw}. 

Let $N(m,n)$ denote the number of partitions of $n$ with rank $m$. We let 
$R(z,q)$ denote the two-variable generating function for the Dyson rank function 
so that
$$
R(z,q) = \sum_{n=0}^\infty \sum_m N(m,n)\,z^m\,q^n.
$$
It is the symmetry of the rank function $R(z,q)$, when $z$
is a root of unity,  that we study in this paper. Many of Ramanujan's mock theta
functions can be written in such a form. For example,
Ramanujan's third order mock theta function $f(q)$ can be written
$$                        
f(q) = 1 + \sum_{n\ge1} \frac{q^{n^2}}{(1+q)^2(1+q^2)^2 \cdots (1+q^n)^2} = R(-1,q).
$$
In fact, it was Dyson \cite[p.20]{Dy1988}, 
who originally called for the study of such symmetry.

\begin{quote}
The mock theta-functions give us tantalizing hints of a grand synthesis
still to be discovered. Somehow it should be possible to build them
into a coherent group-theoretical structure, analogous to the structure
of modular forms which Hecke built around the old theta-functions of
Jacobi. This remains a challenge for the future.

\smallskip

\hskip2in Freeman Dyson, 1987

\hskip2in Ramanujan Centenary Conference\\
\end{quote}

Many authors have taken up Dyson's challenge. In this paper 
we extend the previous work of Ahlgren and Treneer \cite{Ah-Tr08},
Bringmann and Ono \cite{Br-On10} and the first author \cite{Ga19a}.

We have the following identities for the rank generating function $R(z,q)$:
\begin{align}
R(z,q) 
&= 1 + \sum_{n=1}^\infty \frac{q^{n^2}}{(zq,z^{-1}q;q)_n}
\label{eq:Rzqid1}\\
&= \frac{1}{(q;q)_\infty} \Lpar{1 + \sum_{n=1}^\infty 
   \frac{(-1)^n (1 + q^n)(1-z)(1-z^{-1})}
   {(1 - zq^n)(1 - z^{-1}q^n)} \, q^{\frac{1}{2}n(3n+1)}}.
\label{eq:Rzqid2}
\end{align}
See \cite[Eqs (7.2), (7.6)]{Ga88a}.
\\\\
Here and throughout this paper we use the standard $q$-notation:
\begin{align*}
(a;q)_{\infty} &= \prod_{k=0}^\infty (1-aq^k),
\\
(a;q)_{n} &= \frac{(a;q)_{\infty}}{(aq^n;q)_{\infty}},
\\
(a_1,a_2,\dots,a_j;q)_{\infty} 
&= (a_1;q)_{\infty}(a_2;q)_{\infty}\dots(a_j;q)_{\infty},
\\
(a_1,a_2,\dots,a_j;q)_{n} 
&=
(a_1;q)_{n}(a_2;q)_{n}\dots(a_j;q)_{n}.
\end{align*}
\\
Let $N(r,t,n)$ denote the number of partitions of $n$ with rank
congruent to $r$ mod $t$,
and let
$\zeta_p=\exp(2\pi i/p)$. Then
\beq
R(\zeta_p,q) = \sum_{n=0}^\infty \Lpar{\sum_{k=0}^{p-1} N(k,p,n)\,\zeta_p^k}\,q^n.
\label{eq:Rzetaid}
\eeq
Dyson's rank conjectures may be restated

\begin{drconj}[1944]
\label{conj:DRC}
For all nonnegative integers $n$,
\begin{align}
N(0,5,5n+4) &= N(1,5,5n+4) = \cdots  = N(4,5,5n+4) = \tfrac{1}{5}p(5n + 4),
\label{eq:Dysonconj5}\\
N(0,7,7n+5) &= N(1,7,7n+5) = \cdots  = N(6,7,7n+5) = \tfrac{1}{7}p(7n + 5).    
\label{eq:Dysonconj7}
\end{align}
\end{drconj}

As noted in \cite{Ga88a}, \cite{Ga88b}, it can be shown that
Dyson's mod $5$ rank conjecture \eqn{Dysonconj5} follows from an identity
in Ramanujan's Lost Notebook \cite[p.20]{Ra1988}, \cite[Eq. (2.1.17)]{An-Be-RLNIII}.
We let $\zeta_5$ be a primitive $5$th root of unity. Then the
following is Ramanujan's identity.
\begin{align}
R(\zeta_5,q) &= A(q^5) + (\zeta_5 + \zeta_5^{-1}-2)\,\phi(q^5) + q\,B(q^5) + 
(\zeta_5+\zeta_5^{-1})\,q^2\,C(q^5)
\label{eq:Ramid5}\\
&\quad 
- (\zeta_5+\zeta_5^{-1})\,q^3\left\{D(q^5) - (\zeta_5^2 + \zeta_5^{-2}  - 2)\frac{\psi(q^5)}{q^5}
\right\},
\nonumber
\end{align}
where
$$
A(q) = \frac{(q^2,q^3,q^5;q^5)_\infty}{(q,q^4;q^5)_\infty^2},\,
B(q) = \frac{(q^5;q^5)_\infty}{(q,q^4;q^5)_\infty},\,
C(q) = \frac{(q^5;q^5)_\infty}{(q^2,q^3;q^5)_\infty},\,
D(q) = \frac{(q,q^4,q^5;q^5)_\infty}{(q^2,q^3;q^5)_\infty^2},
$$
and
$$
\phi(q) = -1 + \sum_{n=0}^\infty \frac{q^{5n^2}}{(q;q^5)_{n+1} (q^4;q^5)_n},\qquad
\psi(q) =  -1 + 
\sum_{n=0}^\infty \frac{q^{5n^2}}{(q^2;q^5)_{n+1} (q^3;q^5)_n}.
$$
By multiplying by an appropriate power of $q$ and
substituting $q=\exp(2\pi iz)$,
we recognize the functions $A(q)$, $B(q)$, $C(q)$, $D(q)$ 
as being modular forms.
In fact, we can rewrite Ramanujan's identity \eqn{Ramid5} in 
terms of generalized eta-products:
$$
\eta(z) := q^{\frac{1}{24}} \prod_{n=1}^\infty (1 - q^n),\qquad q=\exp(2\pi iz),
$$
and

\begin{equation}
\label{eq:Geta}
\eta_{N,k}(z) =q^{\frac{N}{2} P_2(k/N) }
\prod_{
       \substack{m>0 \\ m\equiv \pm k\pmod{N}}} (1 - q^m),
\end{equation}

where $z \in \mathfrak{h}$, $P_2(t) = \{t\}^2 - \{t\} + \tfrac16$ 
is the second periodic Bernoulli polynomial, 
and $\{t\}=t-\lfloor t\rfloor$ is the fractional part of $t$.
Here as in Robins \cite{Ro94}, $1\le N\nmid k$.
We have
\begin{align}
&q^{-\frac{1}{24}}\left(R(\zeta_5,q) 
 - (\zeta_5 + \zeta_5^4 - 2) \,\phi(q^5) 
 + (1+ 2 \zeta_5 + 2\zeta_5^4 ) \,q^{-2}\,\psi(q^5)\right)  
\label{eq:Ramid5V2}\\
&
= \frac{\eta(25z) \, \eta_{5,2}(5z)}{\eta_{5,1}(5z)^2}
+
\frac{\eta(25z)}{\eta_{5,1}(5z)}
+ (\zeta_5 + \zeta_5^4) \, 
\frac{\eta(25z)}{\eta_{5,2}(5z)}
- (\zeta_5 + \zeta_5^4) \, 
\frac{\eta(25z)\, \eta_{5,1}(5z)}{\eta_{5,2}(5z)^2}.
\nonumber
\end{align}
Equation \eqn{Ramid5}, or equivalently \eqn{Ramid5V2}, give the $5$-dissection of the $q$-series expansion of $R(\zeta_5,q)$. We observe that the function on the right side of \eqn{Ramid5V2} is a weakly holomorphic modular form (with multiplier) of weight $\tfrac{1}{2}$ on the group $\Gamma_0(25) \cap \Gamma_1(5)$.
\\\\
In \cite{Ga19a}, the first author was able to generalize
Ramanujan's result \eqn{Ramid5V2} to all primes $p>3$.

\begin{theorem}[{\cite[Theorem 1.2, p.202]{Ga19a}}]
\label{thm:mainJp0}
For $p>3$ prime and $1 \le a \le \tfrac{1}{2}(p-1)$ define
\beq
\Phi_{p,a}(q) 
:= 
\begin{cases}
\displaystyle
\sum_{n=0}^\infty \frac{q^{pn^2}} 
{(q^a;q^p)_{n+1} (q^{p-a};q^p)_n},
 & \mbox{if $0 < 6a < p$,}\\
\displaystyle
-1 + \sum_{n=0}^\infty \frac{q^{pn^2}} 
{(q^a;q^p)_{n+1} (q^{p-a};q^p)_n},
 & \mbox{if $p < 6a < 3p$,}\\
\end{cases}
\label{eq:phipadef}
\eeq
and
\begin{align}      
\Rpz{\zeta_p,z} 
&:= q^{-\frac{1}{24}} R(\zeta_p,q) 
- 
\chi_{12}(p) \,
\sum_{a=1}^{\frac{1}{2}(p-1)} (-1)^a 
 \,
  \left( \zeta_p^{3a + \frac{1}{2}(p+1)} + \zeta_p^{-3a - \frac{1}{2}(p+1)}  
  \right.
\label{eq:Rpdef}\\
&\hskip 2in \left.  - 
    \zeta_p^{3a + \frac{1}{2}(p-1)} - \zeta_p^{-3a - \frac{1}{2}(p-1)}\right)
  \,
q^{\tfrac{a}{2}(p-3a)-\tfrac{p^2}{24}}\,\Phi_{p,a}(q^p),
\nonumber               
\end{align}      
where
\beq
\chi_{12}(n) := \leg{12}{n} = 
\begin{cases}
1 & \mbox{if $n\equiv\pm1\pmod{12}$,}\\
-1 & \mbox{if $n\equiv\pm5\pmod{12}$,}\\
0 & \mbox{otherwise,}
\end{cases}
\label{eq:chi12}
\eeq
and $q=\exp(2\pi iz)$ with  $\Im(z)>0$.
Then the function 
$$
\eta(p^2 z) \, \Rpz{\zeta_p,z} 
$$
is a weakly holomorphic modular form of weight $1$ on the group
$\Gamma_0(p^2) \cap \Gamma_1(p)$.
\end{theorem}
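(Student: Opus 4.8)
The plan is to realize both $q^{-\frac{1}{24}}R(\zeta_p,q)$ and each auxiliary piece $q^{\frac{a}{2}(p-3a)-\frac{p^2}{24}}\,\Phi_{p,a}(q^p)$ as mock modular forms of weight $\tfrac12$, and then to show that the particular linear combination defining $\Rpz{\zeta_p,z}$ has vanishing non-holomorphic part, so that after multiplying by $\eta(p^2z)$ one is left with an honest weakly holomorphic modular form. First I would use the Hecke/Appell--Lerch representation: specializing $z=\zeta_p$ in \eqref{eq:Rzqid2} writes $R(\zeta_p,q)$ as a sum of the shape $\sum_n (-1)^n q^{n(3n+1)/2}/(1-\zeta_p q^n)$, which is exactly the kind of Appell--Lerch sum handled by Zwegers' theory of the function $\mu$, and likewise each $\Phi_{p,a}$ is a universal mock theta function of the same type. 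This identifies, for every ingredient, its natural modular completion and its \emph{shadow}, a weight $\tfrac32$ unary theta function.

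Rather than carry the full non-holomorphic completion around, I would track the transformation of each piece under the generators $T=\TMat$ and $S=\SMat$ of $\SLZ$. Under $T$ every summand transforms by a root-of-unity phase, so the $T$-invariance (up to the $\zeta_p$-multiplier) is routine. Under $S$ each mock piece acquires an explicit \emph{Mordell integral}, equivalently a period integral of its shadow. The next step is thus to compute the Mordell integral attached to $q^{-\frac{1}{24}}R(\zeta_p,q)$ and the Mordell integral attached to each $q^{\frac{a}{2}(p-3a)-\frac{p^2}{24}}\,\Phi_{p,a}(q^p)$, using the transformation laws for the rank function from Bringmann--Ono and Ahlgren--Treneer together with those for the universal mock theta functions.

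\textbf{The crux, and the step I expect to be the main obstacle, is the cancellation.} One must show that with the precise coefficients $\chi_{12}(p)(-1)^a\big(\zeta_p^{3a+\frac12(p+1)}+\zeta_p^{-3a-\frac12(p+1)}-\zeta_p^{3a+\frac12(p-1)}-\zeta_p^{-3a-\frac12(p-1)}\big)$ the Mordell integrals (equivalently the shadows) contributed by the $\Phi_{p,a}$ terms exactly annihilate the Mordell integral coming from $R(\zeta_p,q)$ under $S$. This is a delicate piece of bookkeeping: it requires matching the theta coefficients of the competing shadows, which forces one to reconcile the $12$th-roots of unity encoded by $\chi_{12}$ with the $p$th-roots of unity $\zeta_p$, and typically reduces to a Gauss-sum or finite-theta identity. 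Establishing that this total shadow vanishes is the whole content of the theorem, since a mock modular form with zero shadow is weakly holomorphic.

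Once the non-holomorphic part is shown to cancel, $\Rpz{\zeta_p,z}$ transforms as a weight $\tfrac12$ form with multiplier, and I would multiply by $\eta(p^2z)$, itself weight $\tfrac12$, to clear the multiplier and obtain weight $1$. Tracking the congruence conditions then pins down the group: the factor $\eta(p^2z)$ forces $p^2\mid c$, giving $\Gamma_0(p^2)$, while invariance of the fixed root of unity $\zeta_p$ under the substitution $z\mapsto\gamma z$ forces $a\equiv d\equiv 1\pmod p$, giving $\Gamma_1(p)$, so the combined group is $\Gamma_0(p^2)\cap\Gamma_1(p)$. Finally, holomorphy on $\mathfrak{h}$ is immediate since every ingredient is a $q$-series with exponents bounded below, and a cusp-by-cusp check (using the $S$- and $T$-transformations already computed) shows at most polar growth at the finitely many cusps, so the product is weakly holomorphic, completing the argument.
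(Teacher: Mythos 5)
Your proposal follows essentially the same route as the paper (and \cite{Ga19a}, where this theorem is proved): complete $q^{-\frac{1}{24}}R(\zeta_p,q)$ and the pieces $\Phi_{p,a}$ to weight-$\tfrac{1}{2}$ harmonic Maass forms with unary theta shadows, show that the period-integral (shadow) contributions cancel in exactly this cyclotomic linear combination, and then multiply by $\eta(p^2z)$ to land in weight $1$ on $\Gamma_0(p^2)\cap\Gamma_1(p)$. The cancellation you single out as the crux is precisely Proposition \propo{theta1id} (equivalently \eqn{T1id}) in the paper, established not by Gauss sums but by dissecting the theta series $\THA{d}{p}{z}$ according to the residue of $6n+1$ modulo $p$ — a finite-theta identity of just the type you anticipate.
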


In \cite{Ga19a}, the first author also considered the modularity of each element of the $p$-dissection of 
$\eta(p^2 z)\,\Rpz{\zeta_p,z}$. For example, we have

\begin{theorem}[{\cite[Corollary 1.3, p.202]{Ga19a}}]
\label{cor:Kp0}
Let $p > 3$ be prime and $s_p=\tfrac{1}{24}(p^2-1)$. Then the function 
$$
\prod_{n=1}^\infty (1-q^{pn})\, 
\sum_{n=\CL{\frac{1}{p}(s_p)}}^\infty \left(\sum_{k=0}^{p-1} N(k,p,pn -s_p)\,\zeta_p^k\right)q^n
$$                      
is a weakly holomorphic modular form of weight $1$ on the group
$\Gamma_1(p)$.
\end{theorem}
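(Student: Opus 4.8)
The plan is to realise the function in the statement as the image of the weight-$1$ form $\eta(p^2 z)\,\Rpz{\zeta_p,z}$ of Theorem~\thm{mainJp0} under the operator $U_p$, and then to quote the classical level-lowering property of $U_p$. Recall that $U_p$, acting on $q$-series by
\[
U_p\Big(\sum_n a(n)\,q^n\Big) = \sum_n a(pn)\,q^n,
\qquad\text{i.e. } U_p f(z) = \tfrac1p\sum_{k=0}^{p-1} f\!\left(\tfrac{z+k}{p}\right),
\]
extracts the residue class $n\equiv 0\pmod p$. Since $s_p=\tfrac1{24}(p^2-1)$ is an integer for $p>3$ (because $p^2\equiv1\pmod{24}$) and $\eta(p^2z)=q^{p^2/24}\prod_{n\ge1}(1-q^{p^2n})$, the main term of $\eta(p^2z)\,\Rpz{\zeta_p,z}$ is $q^{s_p}\prod_{n\ge1}(1-q^{p^2n})\,R(\zeta_p,q)$, whose exponents lie in $s_p+m+p^2\mathbb{Z}_{\ge0}$ as $m$ ranges over the support of $R(\zeta_p,q)$. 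Extracting $n\equiv0\pmod p$ forces $m\equiv -s_p\pmod p$, i.e. $m=pn-s_p$ with $n\ge\CL{\tfrac1p s_p}$, and a short computation gives
\[
U_p\big(\eta(p^2z)\,\Rpz{\zeta_p,z}\big) = \prod_{n=1}^\infty(1-q^{pn})\,\sum_{n=\CL{\frac1p s_p}}^\infty\Big(\sum_{k=0}^{p-1}N(k,p,pn-s_p)\,\zeta_p^k\Big)q^n,
\]
which is exactly the function in the statement.

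The key point, which I expect to be the heart of the argument, is that $U_p$ annihilates every correction term in \eqn{Rpdef}. After the factor $q^{p^2/24}$ from $\eta(p^2z)$ cancels the $q^{-p^2/24}$ appearing there, the $a$-th correction term becomes a root-of-unity multiple of $\prod_{n\ge1}(1-q^{p^2n})\,q^{E_a}\,\Phi_{p,a}(q^p)$ with $E_a=\tfrac{a}{2}(p-3a)$. Since $\Phi_{p,a}(q^p)$ and $\prod(1-q^{p^2n})$ are power series in $q^p$, every exponent of this term is $\equiv E_a\pmod p$. I would then check that $E_a$ is an integer (one of $a$, $p-3a$ is always even) and that $2E_a\equiv -3a^2\pmod p$, so that $E_a\equiv0\pmod p$ would force $p\mid a$; as $1\le a\le\tfrac12(p-1)$ and $p>3$, this never occurs. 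Hence each correction term is supported off the residue class $0\pmod p$ and is killed by $U_p$, leaving only the contribution of the genuine rank generating function. This exponent bookkeeping is precisely the reason the pure rank piece in this residue class is modular on its own, without the mock/theta corrections that are needed at the other residues.

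Finally I would invoke the standard behaviour of $U_p$ on weakly holomorphic modular forms: for a prime $p$ with $p^2\mid N$, the operator $U_p$ carries weight-$k$ weakly holomorphic modular forms on $\Gamma_0(N)\cap\Gamma_1(p)$ to weight-$k$ weakly holomorphic modular forms on $\Gamma_0(N/p)\cap\Gamma_1(p)$, preserving the weight and the $\Gamma_1(p)$-nebentypus while dropping one power of $p$ from the level. Applying this with $k=1$ and $N=p^2$ to the form of Theorem~\thm{mainJp0} yields a weakly holomorphic modular form of weight $1$ on $\Gamma_0(p)\cap\Gamma_1(p)=\Gamma_1(p)$, which by the identification above is the stated function. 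The delicate step is the careful tracking of the multiplier/nebentypus under $U_p$ at weight $1$, together with the verification that no extra growth is introduced at the cusps beyond what ``weakly holomorphic'' permits; the exponent computation and the level drop themselves are routine once the $U_p$ machinery is in place.
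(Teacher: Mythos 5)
Your proof is correct and follows essentially the same route as the paper's: by Definition \refdef{Kpm} and Propositions \propo{RpJ}, \propo{JJSid} and \propo{Kpm2}, the stated function is exactly $\mathcal{K}_{p,0}(\zeta_p,z) = \stroke{\eta(p^2z)\,\Rpz{\zeta_p,z}}{U_{p,0}}{1}$, and the annihilation of the $\Phi_{p,a}$-correction terms that you verify by exponent arithmetic mod $p$ is the paper's Proposition \propo{F2transU} (the congruence $(6\ell)^2\equiv -24m\equiv 0\pmod{p}$ being impossible for $p>3$). The only difference is one of packaging: where you cite the classical level-lowering property of $U_p$ as a black box, the paper (following \cite{Ga19a}) proves precisely that step by hand, via the coset identity $T_rA = B_rT_{r'}$ with $B_r\in\Gamma_0(p^2)$ together with explicit multiplier bookkeeping for the component functions (see the proof of Proposition \propo{F1transUA}).
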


\begin{remark}
This result is an improvement of a theorem of Ahlgren and Treneer 
\cite[Theorem 1.6, p.271]{Ah-Tr08}.
\end{remark}

In this paper we improve the previous results of the first author for the other elements
of the $p$-dissection. These elements feature the functions 
$\mathcal{K}_{p,m}(\zeta_p^d,z)$.

\begin{definition}
\label{def:Kpm}
For $p>3$ prime,  $0 \le m \le p-1$ and $1 \le d \le p-1$ 
define $\mathcal{K}_{p,m}(\zeta_p^d,z)$ as follows :
\begin{enumerate}
\item[(i)]
For $m=0$ or $\leg{-24m}{p}=-1$ define 
\beq
\mathcal{K}_{p,m}(\zeta_p^d,z) := q^{m/p}\,\prod_{n=1}^\infty (1-q^{pn})\, 
\sum_{n=\CL{\frac{1}{p}(s_p-m)}}^\infty \left(\sum_{k=0}^{p-1} N(k,p,pn + m -s_p)\,\zeta_p^{kd}\right)q^n,
\label{eq:Kpm1prop}
\eeq
where $s_p=\frac{1}{24}(p^2-1)$, and $q=\exp(2\pi iz)$. 
\item[(ii)]
For $\leg{-24m}{p}=1$ define                                                  
\begin{align}
\mathcal{K}_{p,m}(\zeta_p^d,z) 
&:= q^{m/p}\,\prod_{n=1}^\infty (1-q^{pn})\, 
\Bigg(
\sum_{n=\CL{\frac{1}{p}(s_p-m)}}^\infty \left(\sum_{k=0}^{p-1} N(k,p,pn + m -s_p)\,\zeta_p^{kd}\right)q^n
\label{eq:Kpm2prop}
\\
& \quad            
- 4 \chi_{12}(p) \, (-1)^{a+d+1} \,
  \sin\Lpar{\frac{d\pi}{p}} \, 
  \sin\Lpar{\frac{6ad\pi}{p}} \, 
  q^{\frac{1}{p}( \frac{a}{2}(p - 3a) - m)}
  \,
  \Phi_{p,a}(q) 
  \Bigg),
\nonumber
\end{align}
where $1\le a \le \frac{1}{2}(p-1)$ has been chosen so that
$$
-24m \equiv \Lpar{6a}^2 \pmod{p}.
$$
\end{enumerate}
\end{definition}

In \cite{Ga19a}, the first author studied the action of the the group $\Gamma_1(p)$ on  $\mathcal{K}_{p,m}(\zeta_p^d,z)$ for $d=1$ and obtained :

\begin{theorem}\cite[Theorem 6.3, p.234]{Ga19a}
\label{thm:KpmthmG}
Suppose $p>3$ prime, $0 \le m \le p-1$.             
Then\\
\begin{enumerate}
\item[(i)]
$\mathcal{K}_{p,0}(\zeta_p,z)$ is a weakly holomorphic modular form of weight 
$1$ on
$\Gamma_1(p)$.
\item[(ii)]
If $1 \le m \le (p-1)$ then $\mathcal{K}_{p,m}(\zeta_p,z)$ is a 
weakly holomorphic modular form of weight $1$ on
$\Gamma(p)$.                           
In particular,
\beq
\stroke{\mathcal{K}_{p,m}(\zeta_p,z)}{B}{1}
= \exp\Lpar{\frac{2\pi ibm}{p}}\,\mathcal{K}_{p,m}(\zeta_p,z),
\label{eq:Kpmtrans}
\eeq
for $B=\AMat\in\Gamma_1(p)$.  
\end{enumerate}
\end{theorem}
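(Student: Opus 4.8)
The plan is to exhibit $\mathcal{K}_{p,m}(\zeta_p,z)$ as a rescaled $p$-dissection component of the weight-one form
$$F(z):=\eta(p^2z)\,\Rpz{\zeta_p,z},$$
which by Theorem~\ref{thm:mainJp0} is weakly holomorphic and modular on $\Gamma_0(p^2)\cap\Gamma_1(p)$, and then to transport the modularity of $F$ through the dissection. Since $\eta(p^2z)=q^{p^2/24}\prod_{n\ge1}(1-q^{p^2n})$ and $q^{-1/24}R(\zeta_p,q)=\sum_N c_N\,q^{\,N-1/24}$ with $c_N=\sum_{k=0}^{p-1}N(k,p,N)\zeta_p^k$, the product $\eta(p^2z)\,q^{-1/24}R(\zeta_p,q)$ has a $q$-expansion supported on integer exponents $s_p+N$, where $s_p=\tfrac1{24}(p^2-1)\in\mathbb{Z}$. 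Sieving out the exponents $\equiv m\pmod p$ and then applying $z\mapsto z/p$ converts $\prod(1-q^{p^2n})$ into $\prod(1-q^{pn})$ and isolates the coefficients $c_N$ with $s_p+N\equiv m$, i.e.\ $N=pn+m-s_p$; because the Euler product factors out of the coefficient sum, the result is exactly the series defining $\mathcal{K}_{p,m}(\zeta_p,z)$ in Definition~\ref{def:Kpm} (the correction terms are treated below). Writing the sieve as $\tfrac1p\sum_{j=0}^{p-1}\zeta_p^{-mj}F(z+j/p)$ and rescaling, this gives, for a nonzero constant $\kappa_p$ independent of $m$ and $B$,
$$\mathcal{K}_{p,m}(\zeta_p,z)=\kappa_p\sum_{k=0}^{p-1}\zeta_p^{-mk}\,\stroke{F}{T_k}{1},\qquad T_k:=\begin{pmatrix}1&k\\0&p\end{pmatrix},$$
a finite combination of slashes of $F$ by the coset matrices $T_k$.

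Granting this identity, the transformation law reduces to a coset calculation. For $B=\AMat\in\Gamma_1(p)$ I would solve $T_kB=B_k\,T_{k'}$ with $B_k\in\SL_2(\mathbb{Z})$. Multiplying out forces $c'=pc$, $a'=a+kc$, $d'=d-ck'$, together with the off-diagonal congruence $a'k'\equiv b+kd\pmod p$. Using $a\equiv d\equiv1$ and $c\equiv0\pmod p$ one gets $c'\equiv0\pmod{p^2}$, $a'\equiv d'\equiv1\pmod p$, and the congruence collapses to $k'\equiv k+b\pmod p$, a shift-permutation of the residues; moreover $\det B_k=\det(T_kB)/\det T_{k'}=p/p=1$, so $B_k\in\Gamma_0(p^2)\cap\Gamma_1(p)$ and Theorem~\ref{thm:mainJp0} yields $\stroke{F}{B_k}{1}=F$. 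Any $k'$ falling outside $\{0,\dots,p-1\}$ is reduced using $\stroke{F}{T}{1}=F$ with $T=\TMat$, valid because $F$ has an integral $q$-expansion.

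Substituting into the dissection identity and using the cocycle property of the slash,
$$\stroke{\mathcal{K}_{p,m}(\zeta_p,z)}{B}{1}=\kappa_p\sum_{k=0}^{p-1}\zeta_p^{-mk}\,\stroke{F}{T_{k'}}{1},$$
and reindexing by $k'\equiv k+b\pmod p$ extracts the factor $\zeta_p^{mb}=\exp(2\pi ibm/p)$, giving $\stroke{\mathcal{K}_{p,m}(\zeta_p,z)}{B}{1}=\exp(2\pi ibm/p)\,\mathcal{K}_{p,m}(\zeta_p,z)$. For $m=0$ this character is trivial, so $\mathcal{K}_{p,0}(\zeta_p,z)$ is $\Gamma_1(p)$-invariant, which is (i); for $1\le m\le p-1$ the factor is trivial precisely when $p\mid b$, i.e.\ on $\Gamma(p)$, which gives (ii) together with \eqref{eq:Kpmtrans}. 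Weak holomorphy is inherited, since each $\stroke{F}{T_k}{1}$ is holomorphic on $\mathfrak{h}$ with at worst poles at the cusps and the established invariance pins down the cusp behaviour on the relevant group.

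The main obstacle is the first step --- making the dissection identity precise, and in particular accounting for the correction terms in $\Rpz{\zeta_p,z}$ of \eqref{eq:Rpdef}. Multiplying $\eta(p^2z)$ into the term $q^{\frac a2(p-3a)-p^2/24}\,\Phi_{p,a}(q^p)$ leaves a series with exponents $\equiv\frac a2(p-3a)\equiv-\tfrac32a^2\pmod p$, where $a$ is the index of Definition~\ref{def:Kpm}(ii); this lands in the class $m$ exactly when $-24m\equiv(6a)^2\pmod p$, i.e.\ when $\leg{-24m}{p}=1$. This is precisely the case split in Definition~\ref{def:Kpm}, and it explains why the $\Phi_{p,a}$ contribution must be subtracted in \eqref{eq:Kpm2prop} while no correction is needed when $m=0$ or $\leg{-24m}{p}=-1$. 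Matching the normalizing constant $-4\chi_{12}(p)(-1)^{a}\sin(\pi/p)\sin(6a\pi/p)$ (the specialization $d=1$ of the constant in \eqref{eq:Kpm2prop}) against the sieved correction coefficient is the most delicate part of the bookkeeping, but it reduces, via the product-to-sum formula for $\cos X-\cos Y$, to a finite trigonometric identity rather than a conceptual difficulty.
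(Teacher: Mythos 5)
Your proposal is correct, but it takes a genuinely different route from the one the paper relies on. The paper itself quotes this theorem from \cite{Ga19a}, and the method used there (and generalized in Section 4 of this paper to the new $\Gamma_0(p)$ symmetry) never slashes the packaged form $F(z)=\eta(p^2z)\,\Rpz{\zeta_p,z}$ directly: it decomposes $\JSdpz{z}$ into the completed components $\tfrac{\eta(p^2z)}{\eta(z)}\,\mathcal{F}_1$ and $\mathcal{F}_2$ (Propositions \ref{propo:Kpm2} and \ref{propo:JJSid}) and pushes each piece separately through $U_{p,m}$ and the coset identity $T_rA=B_rT_{r'}$, using the component-wise transformation law of Theorem \ref{thm:mainthm}. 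You instead treat $F$ as a black box: by Theorem \ref{thm:mainJp0} it is weight-$1$ invariant (trivial multiplier) on $\Gamma_0(p^2)\cap\Gamma_1(p)$; your conjugated matrices $B_k$ for $B\in\Gamma_1(p)$ do land in that group (your computation $c'=pc\equiv 0\pmod{p^2}$, $a'\equiv d'\equiv 1\pmod p$, $\det B_k=1$ is right), and the character $\exp(2\pi ibm/p)$ falls out of the reindexing $k'\equiv k+b\pmod p$, giving (i), (ii) and \eqref{eq:Kpmtrans} at once. The dissection identity $\mathcal{K}_{p,m}(\zeta_p,z)=\tfrac{1}{\sqrt{p}}\sum_k\zeta_p^{-mk}\stroke{F}{T_k}{1}$ that your argument rests on is exactly Propositions \ref{propo:RpJ} and \ref{propo:Kpm2} specialized to $d=1$, including your case split on $\leg{-24m}{p}$ and the $\cos X-\cos Y$ matching of the correction coefficients, so that step is sound rather than a gap. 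What your shortcut buys is economy: for the $\Gamma_1(p)$ statement you need only the packaged modularity of Theorem \ref{thm:mainJp0} plus elementary coset and character bookkeeping, with no Maass-form component analysis. What the paper's finer decomposition buys is generality: for the new result, Theorem \ref{thm:Kpmthm}, a general $A\in\Gamma_0(p)$ is not conjugated by the $T_k$ into the invariance group of the single function $F$ --- slashing by such $A$ mixes the $d$-aspect, carrying $\Rpz{\zeta_p,z}$-data to $\Rpz{\zeta_p^d,z}$-data --- so the component-wise transformations of $\mathcal{F}_1$ and $\mathcal{F}_2$ are genuinely needed there, which is precisely why the paper develops them.
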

\begin{remark}
In equation \eqn{Kpmtrans} we have used the stroke operator notation
defined in \eqn{strokedef}.
\end{remark}

In \cite{An-Be-RLNIII} Entry 2.1.5, we have a 7 dissection of the $q$-series expansion of $R(\zeta_7,q)$. Rewriting the equation in an equivalent form in terms of generalized eta-functions, we have
\begin{align}
&q^{-\frac{1}{24}}\Big(R(\zeta_7,q) 
 + (\zeta_7 + \zeta_7^6 - 2) \,\phi_{7,1}(q^7) 
 + (-\zeta_7^2 + \zeta_7^3 + \zeta_7^4 - \zeta_7^5 ) \,q^{-1}\,\phi_{7,2}(q^7)
\\
\nonumber
&\hspace{58mm}+(1+ 2 \zeta_7^2 + \zeta_7^3 + \zeta_7^4 + 2 \zeta_7^5 ) \,q^{-5}\,\phi_{7,3}(q^7)\Big)  
\label{eq:Ramid7V2}\\
&
= (-1+\zeta_7 + \zeta_7^6) \frac{\eta(49z) \, \eta_{7,3}(7z)}{\eta_{7,1}(7z)\, \eta_{7,2}(7z)}
+ \frac{\eta(49z)}{\eta_{7,1}(7z)}
+ (\zeta_7 + \zeta_7^6) \, \frac{\eta(49z) \, \eta_{7,2}(7z)}{\eta_{7,1}(7z)\, \eta_{7,3}(7z)}\\
&
+ (1+\zeta_7^2 + \zeta_7^5) \,  \frac{\eta(49z)}{\eta_{7,2}(7z)}
- (\zeta_7^2 + \zeta_7^5) \,  \frac{\eta(49z)}{\eta_{7,3}(7z)}
- (1+\zeta_7^3 + \zeta_7^4) \, \frac{\eta(49z) \, \eta_{7,1}(7z)}{\eta_{7,2}(7z)\, \eta_{7,3}(7z)}.
\nonumber
\end{align}
Multiplying both sides by $\eta(49z)$ and letting 
$q\rightarrow q^{\frac{1}{7}}$, we  find the elements 
 $\mathcal{K}_{7,m}(\zeta_7, z)$ of the $7-$dissection of 
 $\eta(49z) \, \Rpzb{7}{\zeta_p,z}$ in terms of generalized eta-products:
\begin{align*}
\mathcal{K}_{7,0}(\zeta_7, z)&=0,\\
\mathcal{K}_{7,1}(\zeta_7, z)&=- (1+\zeta_7^3 + \zeta_7^4) \, \frac{\eta(7z)^2 \, \eta_{7,1}(z)}{\eta_{7,2}(z)\, \eta_{7,3}(z)},\\
\mathcal{K}_{7,2}(\zeta_7, z)&=(-1+\zeta_7 + \zeta_7^6) \frac{\eta(7z)^2 \, \eta_{7,3}(z)}{\eta_{7,1}(z)\, \eta_{7,2}(z)},\\
\mathcal{K}_{7,3}(\zeta_7, z)&=\frac{\eta(7z)^2}{\eta_{7,1}(z)},\\
\mathcal{K}_{7,4}(\zeta_7, z)&=(\zeta_7 + \zeta_7^6) \, \frac{\eta(7z)^2 \, \eta_{7,2}(z)}{\eta_{7,1}(z)\, \eta_{7,3}(z)},\\
\mathcal{K}_{7,5}(\zeta_7, z)&=(1+\zeta_7^2 + \zeta_7^5) \,  \frac{\eta(7z)^2}{\eta_{7,2}(z)},\\
\mathcal{K}_{7,6}(\zeta_7, z)&=- (\zeta_7^2 + \zeta_7^5) \,  \frac{\eta(7z)^2}{\eta_{7,3}(z)}.
\end{align*}
\\
The cases $m=1,2$ and $4$ correspond to when $-24m$ is a quadratic residue modulo $7$ and $m=3,5$ and $6$ to when $-24m$ is a quadratic non-residue modulo $7$. We can clearly see symmetry among between the eta-quotients of the quadratic residue cases and likewise of the non-residue ones. We find that this behavior arises from the transformation of $\mathcal{K}_{p,m}(\zeta_p^d,z)$ under matrices in $\Gamma_0(p)$. This leads us to one of the main results of our paper :

\begin{theorem}
\label{thm:Kpmthm}
Suppose $p>3$ prime, $0 \le m \le p-1$, and $1 \le d \le p-1$.             
Then\\
\beq
\stroke{\mathcal{K}_{p,m}(\zeta_p, z)}{A}{1}
= \frac{\sin(\pi/p)}{\sin(d\pi/p)}\,(-1)^{d+1}\,
\exp\Lpar{\frac{2\pi imak}{p}}\,\mathcal{K}_{p,ma^2}(\zeta_p^d,z),
\label{eq:KpmAtrans}
\eeq
assuming $1 \le a,d \le (p-1)$ 
and
$$
A = \begin{pmatrix}
      a & k \\ p & d 
    \end{pmatrix}
\in \Gamma_0(p).
$$\\
\end{theorem}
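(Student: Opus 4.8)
The plan is to reduce the claimed transformation law to the modularity of the single function $F(z) := \eta(p^2 z)\,\Rpz{\zeta_p,z}$ supplied by Theorem~\thm{mainJp0}, and then to track how its $p$-dissection is permuted by $A$. First I would record the precise sense in which the functions $\mathcal{K}_{p,m}(\zeta_p,z)$ are the components of the $p$-dissection of $F$: after the rescaling $z\mapsto z/p$ (the substitution $q\mapsto q^{1/p}$ used just before the theorem), each $\mathcal{K}_{p,m}(\zeta_p,z)$ is, up to the prefactor $q^{m/p}$ and the explicit $\Phi_{p,a}$ correction of Definition~\refdef{Kpm}(ii), the residue class $m\pmod p$ of the $q$-expansion of $F$. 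Equivalently, there are explicit matrices $W_j$ (translation by $j$ followed by scaling by $p$) and a finite Fourier inversion so that $\mathcal{K}_{p,m}(\zeta_p,z)$ is a root-of-unity weighted average of the translates $F\strokeb{W_j}{1}$. Since the weight-one slash is linear and associative, applying $\strokeb{A}{1}$ commutes into this average, reducing everything to understanding $F\strokeb{W_j A}{1}$.

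The key input --- and where the root of unity is forced to change --- is the transformation of $\Rpz{\zeta_p,z}$ itself under $A=\begin{pmatrix} a & k \\ p & d\end{pmatrix}$. Because $A\in\Gamma_0(p)$ but $A\notin\Gamma_0(p^2)\cap\Gamma_1(p)$, the form $F$ is not preserved; instead I expect $F\strokeb{A}{1}$ to be expressible through $\eta(p^2 z)\,\Rpz{\zeta_p^d,z}$, i.e.\ the same construction but with the internal root of unity replaced by its $d$-th power. This is the manifestation at the level of honest modular forms of the $\SLZ$-transformation of the two-variable rank function $R(z,q)$ and its harmonic Maass-form completion (the Mordell-integral theory set up earlier in the paper): the lower-right entry $d$ of $A$ acts on the elliptic/root-of-unity variable. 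The scalar $\dfrac{\sin(\pi/p)}{\sin(d\pi/p)}\,(-1)^{d+1}$ then emerges from the normalizing factor $(1-\zeta_p)(1-\zeta_p^{-1})=4\sin^2(\pi/p)$ appearing in \eqn{Rzqid2} upon passing from $\zeta_p$ to $\zeta_p^d$, together with a sign from the relevant square-root branch, while the phase $\exp(2\pi i m a k/p)$ is the analogue, for the matrix $A$, of the cusp phase already seen in \eqn{Kpmtrans} of Theorem~\thm{KpmthmG}.

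With these two ingredients in hand I would reassemble the dissection. For each $j$ I factor $W_j A = B_j\,W_{\sigma(j)}$ with $B_j\in\Gamma_0(p^2)\cap\Gamma_1(p)$ preserving $F$ up to its known multiplier, and $\sigma$ a permutation of the residue classes. The exponents driving the dissection are quadratic (the theta-type terms in $R$ and the defining relation $-24m\equiv(6\alpha)^2\pmod p$ of Definition~\refdef{Kpm}(ii), with $\alpha$ the auxiliary index), so conjugation by $A$ multiplies the underlying square root $\alpha$ by $a$ and hence sends $m\mapsto m a^2\pmod p$; in particular the quadratic-residue and non-residue cases of Definition~\refdef{Kpm} are interchanged among themselves, which is exactly the symmetry observed in the $p=7$ list. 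Collecting the $\eta$-multiplier, the finite Fourier weights, and the scalar from the previous paragraph, and recognizing the resulting average as the dissection of $\eta(p^2 z)\,\Rpz{\zeta_p^d,z}$, should yield $\mathcal{K}_{p,ma^2}(\zeta_p^d,z)$ with precisely the constant in \eqn{KpmAtrans}.

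The main obstacle is the bookkeeping of the automorphy and multiplier data on the non-$\Gamma_1(p)$ matrix $A$: pinning down the weight-one factor $(pz+d)^{-1}$ together with the $\eta$-multiplier system and the Gauss-sum/root-of-unity constants so that the sign $(-1)^{d+1}$ and the phase $\exp(2\pi i m a k/p)$ come out exactly right, rather than up to an unknown root of unity. A second delicate point is the $\Phi_{p,a}$ correction terms of Definition~\refdef{Kpm}(ii): these encode the non-holomorphic (Mordell-integral) completion, and I must check that under $A$ the completions on the two sides cancel so that the purely holomorphic identity \eqn{KpmAtrans} survives, with the $\sin(6\alpha d\pi/p)$ factors of the correction transforming consistently with the index change on $\alpha$ induced by $m\mapsto ma^2$. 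Verifying that $\sigma$ is exactly the quadratic map $m\mapsto ma^2$, and not merely a permutation compatible with the residue/non-residue dichotomy, is the crux.
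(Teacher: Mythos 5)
Your outline reproduces the skeleton of the paper's argument: the paper too realizes $\mathcal{K}_{p,m}(\zeta_p,z)$ as a root-of-unity--weighted average of translates (the operator $U_{p,m}$, Proposition \propo{Kpm2}), factors $T_jA=B_jT_{j'}$ with $j\equiv j'a^2-ak\pmod p$ (proof of Proposition \propo{F1transUA}) --- exactly your $W_jA=B_jW_{\sigma(j)}$, and precisely the source of $m\mapsto ma^2$ and of the phase $\exp(2\pi imak/p)$ --- and disposes of the $\Phi_{p,a}$/Mordell-integral corrections by a cancellation of completions (Propositions \propo{theta1id} and \propo{JJSid}). The genuine gap is at the step that carries all the content. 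Any factorization $W_jA=B_jW_{\sigma(j)}$ forces
$B_j\equiv\begin{pmatrix} a & * \\ 0 & d\end{pmatrix}\pmod p$,
so $B_j\in\Gamma_0(p^2)$ but $B_j\notin\Gamma_1(p)$ unless $a=d=1$. Hence Theorem \thm{mainJp0}, which gives modularity only on $\Gamma_0(p^2)\cap\Gamma_1(p)$, can never be applied to $F\strokeb{B_j}{1}$ for your $F=\eta(p^2z)\,\Rpz{\zeta_p,z}$, and your claim that the $B_j$ ``preserve $F$ up to its known multiplier'' is false.

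Moreover, even if that claim were granted, your reassembly would prove the wrong statement: each $F\strokeb{W_{\sigma(j)}}{1}$ is a translate of $F$ itself, so the weighted sum would return a multiple of $\mathcal{K}_{p,ma^2}(\zeta_p,z)$ --- the original root of unity, not $\zeta_p^d$ --- contradicting \eqn{KpmAtrans} whenever $d\neq1$. Note that once you factor, the matrix $A$ never acts by itself, so the ``key input'' of your second paragraph (a transformation law for $F\strokeb{A}{1}$) has nowhere to enter; the change $\zeta_p\mapsto\zeta_p^d$ must happen inside each $F\strokeb{B_j}{1}$. That is exactly what the paper's engine supplies: Theorem \thm{mainthm} (i.e.\ \cite[Theorem 4.1]{Ga19a}) holds on all of $\Gamma_0(p)\supset\Gamma_0(p^2)$ and carries the index change $\ell\mapsto\overline{d\ell}$, so that $\Fell{1}{1}{p}{z}\strokeb{B_j}{1}=\mu(B_j,1)\,\Fell{1}{d}{p}{z}$ because the lower-right entry of $B_j$ is $\equiv d\pmod p$; the multiplier $\mu(B_j,1)=(-1)^{d+1}$ produces the sign, and the sine ratio then falls out of the normalization in Proposition \propo{RpJ}. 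Finally, for the quadratic-residue case one also needs the theta identity \eqn{theta1id} for general $d$ (Proposition \propo{theta1id}; the reference \cite{Ga19a} has only $d=1$) so that the nonholomorphic completions still cancel after the root of unity has changed; your plan flags this as ``delicate'' but provides no mechanism for it.
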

Sometimes it is convenient to rewrite the generalized eta-products in terms of some theta-products of Biagioli. 
\\
\begin{definition}
\label{def:geneta}
Following Biagioli (see \cite[Eq.(2.8),p.277]{Bi89}), define
\beq
f_{N,\rho}(z) := q^{(N-2\rho)^2/(8N)}\,(q^\rho,q^{N-\rho},q^N;q^N)_\infty
\label{eq:fdef},
\eeq
for $N\ge1$ and $N\nmid\rho$.
We have corrected a misprint \cite[p.277]{Bi89} and 
\cite[Eq.(6.14),p.242]{Ga19a}.
\noindent
Then, for a vector $\overrightarrow{n}=(n_0,n_1,n_2,\cdots ,n_{\frac{1}{2}(p-1)}) \in \mathbb{Z}^{\frac{1}{2}(p+1)}$, define 
\beq
j(z)=j(p,\overrightarrow{n},z)=\eta(pz)^{n_0}\prod\limits_{k=1}^{\frac{1}{2}(p-1)}f_{p,k}(z)^{n_k}.
\label{eq:jdef}
\eeq 
\end{definition}
\noindent
We note that 
\beq 
f_{N,\rho}(z) = f_{N,N+\rho}(z) = f_{N,-\rho}(z),
\label{eq:propf1} 
\eeq 
and
\beq 
f_{N,\rho}(z) = \eta(Nz)\,\eta_{N,\rho}(z).
\label{eq:propf2}
\eeq
\\
We illustrate the theorem for $p=7$, with $m=1$ and $A = \begin{pmatrix}
      2 & 1 \\ 7 & 4 
    \end{pmatrix}
\in \Gamma_0(7)$.  
\\
Using \eqn{propf2}, we have $$\mathcal{K}_{7,1}(\zeta_7, z)=- (1+\zeta_7^3 + \zeta_7^4) \, \frac{\eta(7z)^3 \, f_{7,1}(z)}{f_{7,2}(z)\, f_{7,3}(z)}.$$ Then, using the Biagioli transformation identity and the transformation for $\eta(z)$ in \cite[Theorems 6.12 \& 6.14, p.243]{Ga19a}, we have
\begin{align*}
\stroke{\mathcal{K}_{7,1}(\zeta_7, z)}{A}{1}
&=\stroke{- (1+\zeta_7^3 + \zeta_7^4) \, \frac{\eta(7z)^3 \, f_{7,1}(z)}{f_{7,2}(z)\, f_{7,3}(z)}}{A}{1}\\
&=- (1+\zeta_7^3 + \zeta_7^4) \, \frac{\eta(7z)^3 \, f_{7,2}(z)}{f_{7,4}(z)\, f_{7,6}(z)}\frac{e^{\frac{2 \pi i}{7}}}{e^{\frac{8 \pi i}{7}}.e^{\frac{18 \pi i}{7}}}\\
&=- (1+\zeta_7^3 + \zeta_7^4) \, \frac{\eta(7z)^3 \, f_{7,2}(z)}{f_{7,3}(z)\, f_{7,1}(z)}.e^{\frac{-3 \pi i}{7}}\qquad\mbox{(by \eqn{propf1})}\\
&=\exp\Lpar{\frac{4\pi i}{7}}\,(1+\zeta_7^3 + \zeta_7^4) \, \frac{\eta(7z)^2 \, \eta_{7,2}(z)}{\eta_{7,3}(z)\, \eta_{7,1}(z)}\qquad\mbox{(by \eqn{propf2})}.\\
\end{align*}

It can be easily checked that $1+\zeta_7^3 + \zeta_7^4=-\frac{\sin(\pi/7)}{\sin(4\pi/7)}\,(\zeta_7^4 + \zeta_7^3)$. Therefore,

\begin{align*}
\stroke{\mathcal{K}_{7,1}(\zeta_7, z)}{A}{1}
&=\exp\Lpar{\frac{4\pi i}{7}}\,(1+\zeta_7^3 + \zeta_7^4) \, \frac{\eta(7z)^2 \, \eta_{7,2}(z)}{\eta_{7,3}(z)\, \eta_{7,1}(z)}\\
&=-\frac{\sin(\pi/7)}{\sin(4\pi/7)}\,
\exp\Lpar{\frac{4\pi i}{7}}\,(\zeta_7^4 + \zeta_7^3) \, \frac{\eta(7z)^2 \, \eta_{7,2}(z)}{\eta_{7,1}(z)\, \eta_{7,3}(z)}\\
&=-\frac{\sin(\pi/7)}{\sin(4\pi/7)}\,
\exp\Lpar{\frac{4\pi i}{7}}\,\mathcal{K}_{7,4}(\zeta_7^4,z).\\
\end{align*}
which agrees with the transformation in Theorem \thm{Kpmthm}.
\\\\
Our other main result of the paper concerns the symmetry of the zeta-coefficients in the identity for ${\mathcal{K}_{p,0}(\zeta_p, z)}$ in terms of generalized eta-functions. In {\cite[Section 6.4, p.242]{Ga19a}}, the first author found that \\
\beq
{\mathcal{K}_{11,0}(\zeta_{11}, z)}=(q^{11};q^{11})_\infty \,
\sum_{n=1}^\infty \Lpar{\sum_{k=0}^{10} N(k,11,11n-5)\,\zeta_{11}^k}q^n
=
\sum_{k=1}^5 c_{11,k}\, j_{11,k}(z),
\label{eq:rank11id}
\eeq
where
$$
j_{11,k}(z) = \frac{\eta(11z)^4}{\eta(z)^2} \, 
\frac{1}{\eta_{11,4k}(z)\,\eta_{11,5k}(z)^2},
$$
and
\begin{align*}
c_{11,1} &= 2\,{\zeta_{11}}^{9}+2\,{\zeta_{11}}^{8}+{\zeta_{11}}^{7}+{\zeta_{11}}^{4}+2\,{\zeta_{11}}^{3}+2
\,{\zeta_{11}}^{2}+1,\\
c_{11,2} & =-({\zeta_{11}}^{9}+{\zeta_{11}}^{8}+2\,{\zeta_{11}}^{7}+{\zeta_{11}}^{6}+{\zeta_{11}}^{5}+2\,{
\zeta_{11}}^{4}+{\zeta_{11}}^{3}+{\zeta_{11}}^{2}+1),\\
c_{11,3} &= 2\,{\zeta_{11}}^{8}+2\,{\zeta_{11}}^{7}+2\,{\zeta_{11}}^{4}+2\,{\zeta_{11}}^{3}+3,\\
c_{11,4} &= 4\,{\zeta_{11}}^{9}+{\zeta_{11}}^{8}+2\,{\zeta_{11}}^{7}+2\,{\zeta_{11}}^{6}
+2\,{\zeta_{11}}^{5}+2\,{\zeta_{11}}^{4}+{\zeta_{11}}^{3}+4\,{\zeta_{11}}^{2}+4,\\
c_{11,5} &=-({\zeta_{11}}^{9}+2\,{\zeta_{11}}^{8}-{\zeta_{11}}^{7}+2\,{\zeta_{11}}^{6}+2\,{\zeta_{11}}^{5}-
{\zeta_{11}}^{4}+2\,{\zeta_{11}}^{3}+{\zeta_{11}}^{2}+3).\\
\end{align*}
Our theorem reveals hidden symmetry in these coefficients. We find \\
\begin{align*}
c_{11,1} &= 1+2\,\left({\zeta_{11}}^{2}+{\zeta_{11}}^{9})+2\,({\zeta_{11}}^{3}+{\zeta_{11}}^{8})+({\zeta_{11}}^{4}+{\zeta_{11}}^{7}\right),
\\
c_{11,2} & =-\frac{\sin(\pi/11)}{\sin(2\pi/11)}(1+2\,({\zeta_{11}}^{4}+{\zeta_{11}}^{7})+2\,({\zeta_{11}}^{5}+{\zeta_{11}}^{6})+({\zeta_{11}}^{3}+{\zeta_{11}}^{8})),
\\
c_{11,3} &= -\frac{\sin(\pi/11)}{\sin(3\pi/11)}(1+2\,({\zeta_{11}}^{5}+{\zeta_{11}}^{6})+2\,({\zeta_{11}}^{2}+{\zeta_{11}}^{9})+({\zeta_{11}}^{}+{\zeta_{11}}^{10})),
\\
c_{11,4} & =\frac{\sin(\pi/11)}{\sin(4\pi/11)}(1+2\,({\zeta_{11}}^{3}+{\zeta_{11}}^{8})+2\,({\zeta_{11}}^{10}+{\zeta_{11}}^{})+({\zeta_{11}}^{6}+{\zeta_{11}}^{5})),
\\
c_{11,5} &= -\frac{\sin(\pi/11)}{\sin(5\pi/11)}(1+2\,({\zeta_{11}}^{}+{\zeta_{11}}^{10})+2\,({\zeta_{11}}^{7}+{\zeta_{11}}^{4})+({\zeta_{11}}^{2}+{\zeta_{11}}^{9})).
\\
\end{align*}
We clearly see symmetry in these coefficients.
\\\\
In general, our result implies certain symmetries for the coefficients in the identities for ${\mathcal{K}_{p,0}(\zeta_p, z)}$. These involve explicit modular forms in terms of the Biagioli theta functions $f_{N,\rho}(z)$ defined in \eqn{fdef}. In particular, they involve eta-quotients $j(p,\overrightarrow{n},z)$ defined in \eqn{jdef}.
\\\\
The case $m=0$ is quite special and leads us to our other major result. The exact form of this result is given later in Theorem \thm{coeffsymmKp0}.

\begin{theorem}
\label{thm:Kp0sym}
Let $p>3$ prime and the $t$ vectors $\overrightarrow{n_{\ell}}, 1\leq \ell \leq t$ and $j(z)$ be defined as in Definition \refdef{geneta}. Suppose
$$
\mathcal{K}_{p,0}(\zeta_p,z)
=\sum_{{\ell}=1}^t\sum_{r=1}^{\frac{1}{2}(p-1)}c_{p,r,{\ell}}(\zeta_p)
         j(p,\pi_r(\overrightarrow{n_{\ell}}),z),
$$ 
where for $1\leq r \leq \frac{1}{2}(p-1)$, 
$\pi_r$ is permutation on $\{1,2,\cdots,\frac{1}{2}(p-1)\}$ defined as $\pi_r(i)=i'$ 
where $ri'\equiv \pm i \pmod{p}$ and the functions 
$j(p,\pi_r(\overrightarrow{n_{\ell}}),z)$ are linearly independent (over $\mathbb{Q})$. 
Then
$$
c_{p,r,\ell}(\zeta_p)=\frac{\sin(\pi/p)}{\sin(r\pi/p)}\,w(r,p)c_{p,1,\ell}(\zeta_p^r),
$$ 
where $w(r,p)=\pm 1$. 
\end{theorem}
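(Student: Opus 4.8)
The plan is to apply one stroke operator to the whole of $\mathcal{K}_{p,0}(\zeta_p,z)$ and then to match the two resulting expansions in the linearly independent family $\{\,j(p,\pi_s(\overrightarrow{n_\ell}),z)\,\}$. Fix $r$ with $1\le r\le\tfrac12(p-1)$ and choose
\[
A=\begin{pmatrix} a & k \\ p & d\end{pmatrix}\in\Gamma_0(p),\qquad d=r,\quad a\equiv r^{-1}\pmod{p},
\]
which is possible since $\gcd(r,p)=1$. Because $m=0$ we have $ma^2=0$ and $\exp(2\pi i\,mak/p)=1$, so the $m=0$ case of Theorem \thm{Kpmthm} reads
\[
\strokevb{\mathcal{K}_{p,0}(\zeta_p,z)}{A}{1}
=\frac{\sin(\pi/p)}{\sin(r\pi/p)}\,(-1)^{r+1}\,\mathcal{K}_{p,0}(\zeta_p^{\,r},z).
\]

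On the right I would expand $\mathcal{K}_{p,0}(\zeta_p^{\,r},z)$ by applying the Galois automorphism $\tau_r\colon\zeta_p\mapsto\zeta_p^{\,r}$ to the hypothesised expansion: each $j(p,\pi_s(\overrightarrow{n_\ell}),z)$ has rational $q$-coefficients and $q$ is fixed by $\tau_r$, so $\mathcal{K}_{p,0}(\zeta_p^{\,r},z)=\sum_{\ell}\sum_{s}c_{p,s,\ell}(\zeta_p^{\,r})\,j(p,\pi_s(\overrightarrow{n_\ell}),z)$. On the left I would transform each summand via the Biagioli transformation and the transformation of $\eta$ in \cite[Theorems 6.12 \& 6.14]{Ga19a}. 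Under $A$ the building block $f_{p,\rho}$ is sent, up to a root of unity, to $f_{p,a\rho}$; reducing the index into $\{1,\dots,\tfrac12(p-1)\}$ by \eqn{propf1} then permutes the basis by $\pi_s\mapsto\pi_{as}=\pi_{r^{-1}s}$, so that
\[
\strokevb{j(p,\pi_s(\overrightarrow{n_\ell}),z)}{A}{1}=u_{s,\ell}\,j(p,\pi_{r^{-1}s}(\overrightarrow{n_\ell}),z)
\]
for some root of unity $u_{s,\ell}$. In particular $s=r$ is the unique index mapped onto $j(p,\pi_1(\overrightarrow{n_\ell}),z)=j(p,\overrightarrow{n_\ell},z)$.

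Matching the coefficient of $j(p,\overrightarrow{n_\ell},z)$ on the two sides—valid because the $j(p,\pi_s(\overrightarrow{n_\ell}),z)$ are linearly independent over $\mathbb{Q}$, hence over $\mathbb{Q}(\zeta_p)$—then gives
\[
c_{p,r,\ell}(\zeta_p)\,u_{r,\ell}=\frac{\sin(\pi/p)}{\sin(r\pi/p)}\,(-1)^{r+1}\,c_{p,1,\ell}(\zeta_p^{\,r}),
\]
which is exactly the claimed identity with $w(r,p)=(-1)^{r+1}/u_{r,\ell}$.

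To finish I must show this number equals $\pm1$ and is independent of $\ell$. For the first point I would use reality: the rank symmetry $N(m,n)=N(-m,n)$ yields $N(k,p,n)=N(p-k,p,n)$, so each coefficient $\sum_k N(k,p,\cdot)\zeta_p^{kd}$ is fixed by complex conjugation and $\mathcal{K}_{p,0}(\zeta_p^{\,d},z)$ has real $q$-coefficients for every $d$. Conjugating the expansion and using linear independence shows $c_{p,s,\ell}(\zeta_p)\in\mathbb{Q}(\zeta_p+\zeta_p^{-1})$; since this real subfield is $\tau_r$-stable, $c_{p,1,\ell}(\zeta_p^{\,r})$ is real too, and as $\sin(\pi/p)/\sin(r\pi/p)$ is real the root of unity $w(r,p)$ must be $\pm1$. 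The hard part will be the uniformity in $\ell$: reality only forces $(-1)^{r+1}/u_{r,\ell}\in\{\pm1\}$ for each $\ell$ separately, so one has to show the sign does not depend on the exponent vector $\overrightarrow{n_\ell}$. This forces one to make Biagioli's multiplier fully explicit—$u_{r,\ell}$ is multiplicative in $\overrightarrow{n_\ell}$, and one must verify that its sign is constant once the integrality and cusp conditions making each $j(p,\overrightarrow{n_\ell},z)$ a weight-one form on $\Gamma_1(p)$ with the correct multiplier are imposed. Performing this evaluation, which at the same time determines the value of $w(r,p)$, is the substance of the sharper Theorem \thm{coeffsymmKp0} and is where the genuine work lies.
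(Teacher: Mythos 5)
Your proposal is correct in substance and follows the same skeleton as the paper's own proof (given as the exact form, Theorem \thm{coeffsymmKp0}): stroke the hypothesised expansion by $A=\begin{pmatrix} a & k \\ p & r\end{pmatrix}\in\Gamma_0(p)$, use the $m=0$ case of Theorem \thm{Kpmthm} on the left, expand $\mathcal{K}_{p,0}(\zeta_p^{r},z)$ via the Galois automorphism $\zeta_p\mapsto\zeta_p^{r}$ (legitimate because the $j$'s have rational $q$-coefficients), transform each $j(p,\pi_s(\overrightarrow{n_\ell}),z)$ term by term, and match coefficients using linear independence. Where you differ is in how the multiplier is pinned down, and here your route is a detour: you leave $u_{s,\ell}$ as an unspecified root of unity and then invoke reality of the coefficients (via $N(k,p,n)=N(p-k,p,n)$) to force $u_{r,\ell}=\pm1$; the paper instead quotes its Theorem \thm{jtrans}, already proved in Section 3, which for $j(p,\overrightarrow{n},z)\in\mathfrak{F}(0,p)$ gives the multiplier explicitly as $(-1)^{L(\overrightarrow{n},a,k,p)}$, so the sign comes with an explicit exponent and no reality argument is needed. (Your argument is valid, but note it only determines $u_{r,\ell}$ when $c_{p,r,\ell}(\zeta_p)\neq 0$, and, like the paper, you are tacitly assuming each $j(p,\overrightarrow{n_\ell},z)$ satisfies the modularity conditions of Theorem \thm{modular}, without which no transformation law applies.) Finally, the ``hard part'' you defer --- showing the sign is independent of $\ell$ --- is not a genuine gap, because it is not claimed or proved in the paper either: the exact form, Theorem \thm{coeffsymmKp0}, gives the sign as $(-1)^{d+1+L(\overrightarrow{n},d)}$ with $\overrightarrow{n}=\overrightarrow{n_\ell}$, which in general varies with $\ell$; the notation $w(r,p)$ in the present statement is loose shorthand for a per-coefficient sign (invisible in the $p=11$ example since $t=1$). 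So what you have already established is exactly what the paper establishes, and the evaluation you postpone is the content of Theorem \thm{coeffsymmKp0} rather than missing work.
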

\begin{example}
\label{ex:Kp0symeg}
We illustrate the theorem for $p=11$. Here $t=1$. From \eqn{rank11id} we have
$$
\mathcal{K}_{11,0}(\zeta_{11},z)
=\sum_{r=1}^{5}c_{11,r,1}(\zeta_{11})
         j(11,\pi_r(\overrightarrow{n_{1}}),z),
$$
where
$$
\vec{n}_1 = (15, -2,-2 ,-2,-3,-4),
$$
$$
c_{11,r,1}(\zeta_{11}) 
= 1+2\,\left({\zeta_{11}}^{2}+{\zeta_{11}}^{9})
  +2\,({\zeta_{11}}^{3}+{\zeta_{11}}^{8})
   +({\zeta_{11}}^{4}+{\zeta_{11}}^{7}\right),
$$
$$
c_{11,r,1}(\zeta_{11})=\frac{\sin(\pi/11)}{\sin(r\pi/11)}\,w(r,11)
                        c_{11,1,1}(\zeta_p^r),\qquad (1 \le r \le 5),
$$ 
$$
w(1,11)=1, \quad
w(2,11)=-1, \quad
w(3,11)=-1, \quad
w(4,11)=1, \quad
w(5,11)=-1,
$$
and
$$
j(11,\pi_r(\overrightarrow{n_{1}}),z)
= \frac{\eta(11z)^4}{\eta(z)^2} \, 
\frac{1}{\eta_{11,4r}(z)\,\eta_{11,5r}(z)^2}=
\frac{\eta(11z)^{15}}
{f_{11,r}(z)^2 
f_{11,2r}(z)^2 
f_{11,3r}(z)^2 
f_{11,4r}(z)^3 
f_{11,5r}(z)^4 }.
$$
\end{example}

The paper is organized as follows. In Section 2, we review transformation  
results for theta functions and Maass forms. In Section 3, we give conditions 
for 
the modularity of the generalized eta-quotients which are the building blocks 
for our 
expressions for $\mathcal{K}_{p,m}(\zeta_{p},z)$ needed in a later section. 
In Section 4, we state and prove our main result on the symmetry of Dyson's 
rank function. This involves generalizing many earlier transformation results 
in \cite{Ga19a} and finding the transformation of 
$\mathcal{K}_{p,m}(\zeta_{p},z)$ under matrices in $\Gamma_0(p)$. In 
Section 5, we give another symmetry result, more precisely the symmetry 
among the cyclotomic coefficients in an expression for 
$\mathcal{K}_{p,0}(\zeta_{p},z)$ in terms of generalized eta-functions.  
Section 6 is devoted to calculating lower bounds for the orders of 
$\mathcal{K}_{p,m}(\zeta_{p},z)$ at the cusps of $\Gamma_1(p)$ which we 
utilize to prove identities in the subsequent section. 
In Section 7, we give an algorithm that uses the Valence formula
for proving generalized eta-quotient identities
for $\mathcal{K}_{p,m}(\zeta_{p},z)$. We illustrate the algorithm in
detail for $p=11$. We give explicit identities for $p=11$ and $p=13$.
For $p=17$ and $p=19$ we only give the form of the identities.


\section{Preliminary Definitions and Results}

In this section, we state the definitions and results from \cite{Ga19a}, which we will be using in the proof of our main theorem on the symmetry of Dyson's rank function.

\subsection{Dyson's rank function as a mock modular form}
Following \cite{Br-On10} and \cite{Ga19a} we define a number of functions.
Suppose $0 < a < c$ are integers, and assume throughout that $q:=\exp(2\pi iz)$.
We define
\begin{align*}
\Mac{a}{c}{z} &:= \frac{1}{(q;q)_\infty} \sum_{n=-\infty}^\infty 
                  \frac{(-1)^n q^{n+\frac{a}{c}}}
                       {1-q^{n+\frac{a}{c}}} \, q^{\frac{3}{2}n(n+1)}
\\
\Nac{a}{c}{z} &:= \frac{1}{(q;q)_\infty} \Lpar{1 + \sum_{n=1}^\infty 
                  \frac{(-1)^n (1 + q^n)\left(2 - 2\cos\left(\frac{2\pi a}{c}\right)\right)}
                       {1 - 2\cos\left(\frac{2\pi a}{c}\right)q^n + q^{2n}} 
                        \, q^{\frac{1}{2}n(3n+1)}}.
\end{align*}
We find
that
\begin{align}
\sum_{n=0}^\infty \frac{q^{cn^2}}{(q^a;q^c)_{n+1} (q^{c-a};q^c)_n}
&= 1 + \frac{q^a}{(q^c;q^c)_\infty} 
\sum_{n=-\infty}^\infty \frac{(-1)^n q^{\frac{3c}{2}n(n+1)}}{1-q^{cn+a}}
\label{eq:Macid}\\
&= 1 + q^a \, \Mac{a}{c}{cz}.
\nonumber
\end{align}
By \eqn{Rzqid2} we have
\beq
R(\zeta_c^a,q) = \Nac{a}{c}{z}.
\label{eq:RNacid}
\eeq
We also define
\begin{align}
\Nell{a}{c}{z} &:=\csc\Lpar{\frac{a\pi}{c}}\, q^{-\frac{1}{24}}\,\Nac{a}{c}{z},
\label{eq:Ndef}\\
\Mell{a}{c}{z} &:=2 q^{\frac{3a}{2c}\left(1-\frac{a}{c}\right) - \frac{1}{24}} \, \Mac{a}{c}{z}.
\label{eq:Mdef}
\end{align}
\subsection{Theta-function definitions}
We state some results of Shimura \cite{Sh} needed to derive a theta function identity which we use in the next subsection to relate two Maass forms of weight $\frac{1}{2}$.
\\\\
For integers $0\le k < N$ we define
$$
\ttwid(k,N;z) := \sum_{m=-\infty}^\infty (Nm+k) \exp\Lpar{\frac{\pi i z}{N}(Nm+k)^2}.
$$
We note that this corresponds to $\theta(z;k,N,N,P)$ in Shimura's 
notation \cite[Eq.(2.0), p.454]{Sh} (with $n=1$, $\nu=1$, and $P(x)=x$).
For integers $0\le a,b < c$ we define
$$
\Theta_1(a,b,c;z) := \zeta_{c^2}^{3ab}\, \zeta_{2c}^{-a}\,
\sum_{m=0}^{6c-1} (-1)^m \sin\Lpar{\frac{\pi}{3}(2m+1)}
\exp\Lpar{\frac{-2\pi i m a}{c}} \ttwid(2mc-6b+c,12c^2;z),
$$
and
\begin{align*}
\hspace{-17mm}\Theta_2(a,b,c;z) &:= \sum_{\ell=0}^{2c-1} \left( (-1)^\ell \, 
\exp\Lpar{\frac{-\pi ib}{c}(6\ell+1)} \, \ttwid(6c\ell + 6a + c,12c^2;z) \right.\\
&\qquad\qquad \left. + (-1)^\ell
\exp\Lpar{\frac{-\pi ib}{c}(6\ell-1)} \, \ttwid(6c\ell + 6a - c,12c^2;z) \right).
\end{align*}
An easy calculation gives
\begin{align}
\Theta_1(a,b,c;z) &=
6c\,\zeta_{c^2}^{3ab}\, \zeta_{2c}^{-a}\,
\sum_{n=-\infty}^\infty (-1)^n \Lpar{\frac{n}{3} + \frac{1}{6} - \frac{b}{c}}\,
\sin\Lpar{\frac{\pi}{3}(2n+1)}\,
\exp\Lpar{\frac{-2\pi i n a}{c}} 
\label{eq:Theta1abcid}\\
&\hskip 2in \times \exp\Lpar{3\pi i z\Lpar{\frac{n}{3} + \frac{1}{6} - \frac{b}{c}}^2}.
\nonumber
\end{align}
In addition we need to define
\beq
\THA{a}{c}{z} := \sum_{n=-\infty}^\infty (-1)^n (6n+1) \sin\Lpar{\frac{\pi a(6n+1)}{c}}
\exp\Lpar{3\pi iz\Lpar{n + \frac{1}{6}}^2}. 
\label{eq:Theta1acdef}
\eeq
This coincides with Bringmann and Ono's
function $\Theta\Lpar{\frac{a}{c};z}$ which is given in \cite[Eq.(1.6), p.423]{Br-On10}.
An easy calculation gives
$$
\THA{a}{c}{z}  = -\frac{i}{2c} \, \Theta_2(0,-a,c;z).
$$
\begin{prop}
\label{propo:theta1id}
Let $p > 3$ be prime. Then 
\beq
\THA{d}{p}{z}
=
(-1)^d\, \frac{2}{\sqrt{3}} \,\chi_{12}(p) \,
\sum_{a=1}^{\frac{1}{2}(p-1)} (-1)^a 
\sin\Lpar{\frac{6ad\pi}{p}} \,\Theta_1(0,-a,p;p^2z).
\label{eq:theta1id}
\eeq
\end{prop}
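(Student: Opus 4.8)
The goal is to prove Proposition~\ref{propo:theta1id}, an identity expressing the single theta function $\THA{d}{p}{z}$ as a weighted sum of the functions $\Theta_1(0,-a,p;p^2z)$ over $1 \le a \le \tfrac12(p-1)$, with weights $(-1)^a\sin(6ad\pi/p)$ and an overall factor involving $\chi_{12}(p)$. My plan is to reduce both sides to the common normal form of an exponential theta series in the variable $z$ with summation index running over an arithmetic progression, match the Gaussian exponents, and then verify the equality of the finite Fourier/trigonometric coefficients attached to each exponent.

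\medskip\noindent
\textbf{Step 1: Expand both sides as Gaussian series.} First I would rewrite the left side using its definition \eqn{Theta1acdef}: it is a sum over $n\in\mathbb{Z}$ of terms $(-1)^n(6n+1)\sin(\pi d(6n+1)/p)$ times $\exp(3\pi iz(n+\tfrac16)^2)$. The quadratic exponent here is $3\pi iz(n+\tfrac16)^2 = \tfrac{\pi i z}{12}(6n+1)^2$, so the natural summation variable is $u=6n+1$, i.e.\ $u\equiv 1\pmod 6$. For the right side, I would substitute $z\mapsto p^2 z$ into the reduced form \eqn{Theta1abcid} of $\Theta_1(0,-a,p;z)$, whose exponent $3\pi i z(\tfrac n3+\tfrac16+\tfrac ap)^2$ becomes $3\pi i p^2 z(\tfrac n3+\tfrac16+\tfrac ap)^2=\tfrac{\pi i z}{12}(2np+p+6a)^2$. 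Thus each $\Theta_1(0,-a,p;p^2z)$ is a Gaussian series in the variable $v=2np+p+6a$. The heart of the matter is that both sides become $\sum \text{(coefficient)}\exp(\tfrac{\pi i z}{12} N^2)$, and I must show the coefficients of each distinct $N^2$ agree.

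\medskip\noindent
\textbf{Step 2: Match the index sets.} The key arithmetic observation is that as $a$ ranges over $1,\dots,\tfrac12(p-1)$ and $n$ over $\mathbb{Z}$, the values $v=2np+p+6a$ are exactly the integers $v$ with $v\equiv p\pmod 2$ (so $v$ odd, since $p$ is odd) and $v\not\equiv 0\pmod p$ lying in residues $\pm 6a$; meanwhile the left side ranges over $u\equiv 1\pmod 6$. Because $\gcd(6,p)=1$, the congruence $N\equiv 1\pmod 6$ and the parity/coprimality conditions together should make the two index sets coincide after accounting for the $n\mapsto -1-n$ (equivalently $v\mapsto -v$) symmetry that both sides possess. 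I would use the Chinese Remainder Theorem to parametrize $N\pmod{6p}$ and track which residue classes survive; the factor $\chi_{12}(p)=\leg{12}{p}$ and the constraint $N\equiv 1\pmod 6$ on the left are what force the quadratic residue symbol to appear when one reindexes in terms of $6a\pmod p$.

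\medskip\noindent
\textbf{Step 3: Match coefficients and collect.} Once the exponents are aligned, I would compare the trigonometric coefficients. On the left the coefficient is essentially $(-1)^n(6n+1)\sin(\pi d(6n+1)/p)$; on the right, after collecting, the coefficient of a given Gaussian involves $(-1)^a\sin(6ad\pi/p)$ times a sign and the linear factor $(\tfrac n3+\tfrac16+\tfrac ap)$ coming from \eqn{Theta1abcid}, which is proportional to $v$. The main obstacle, and the step I expect to require the most care, is bookkeeping the signs: reconciling the $(-1)^n$ and $\sin(\tfrac\pi3(2n+1))$ factors on the right (which take values in $\{\pm\tfrac{\sqrt3}{2}\}$, explaining the $\tfrac{2}{\sqrt3}$ normalization) with the single $(-1)^d$ and the $\chi_{12}(p)$ appearing on the left. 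I would handle this by splitting $n$ into residue classes mod $6$ (or mod $3$ given the $\sin(\tfrac\pi3(\cdot))$ factor), evaluating the sine explicitly in each class, and checking that the resulting signs reorganize into $(-1)^{a}$ and the Legendre symbol via the standard evaluation of $\leg{12}{p}$ in terms of $p\bmod{12}$. A clean way to control everything uniformly is to verify the identity as an equality of formal $q$-expansions coefficient-by-coefficient in $N$, reducing the whole proposition to the elementary trigonometric identity that the map $a\mapsto 6a\bmod p$ is a bijection carrying $\{1,\dots,\tfrac{p-1}{2}\}$ onto a set of residues whose associated signs produce exactly $\chi_{12}(p)(-1)^{a+d}$.
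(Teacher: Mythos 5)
Your proposal is correct and takes essentially the same approach as the paper's own proof: the paper likewise reduces both sides to Gaussian series $\exp\left(\frac{\pi i z}{12}N^2\right)$ with $\gcd(N,6p)=1$, and its explicit unique parametrization $6n+1=\pm\left(12p^2m+2p\ell+6a\mp p\right)$ (two families, one for each sign) is exactly your CRT index-matching, followed by the same evaluation of the $\sin\left(\tfrac{\pi}{3}(2\ell+1)\right)=\pm\tfrac{\sqrt3}{2}$ factors and the identification $(-1)^{p_1}=\chi_{12}(p)$. One small caution: neither index set is actually invariant under $N\mapsto -N$ (each exponent occurs exactly once per side, with the two sides' indices sometimes differing in sign), but the residue-class bookkeeping you describe handles precisely this, so it is an imprecision of wording rather than a gap.
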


\begin{proof}
We consider two cases as in \cite[Theorem 5.1, p.225]{Ga19a}.

\subsubsection*{CASE 1} $p\equiv 1 \pmod{6}$. Let $p_1 = \frac{1}{6}(p-1)$
so that $6p_1+1=p$. We note that each integer $n$ satisfying $6n+1\not\equiv0
\pmod{p}$ can be written uniquely as
\begin{align*}
&(i) \qquad n=p(2pm+\ell_1) + a  + p_1, \quad
     \mbox{where $1\le a \le \frac{1}{2}(p-1)$, $0 \le \ell_1 < 2p$, and 
           $m \in \mathbb{Z}$,}\\
&\hspace{75mm}\mbox{or}\\
&(ii) \qquad n=p(-2pm-\ell_1) - a  + p_1, \quad
     \mbox{where $1\le a \le \frac{1}{2}(p-1)$, $1 \le \ell_1 \le 2p$, and 
           $m \in \mathbb{Z}$.}\\
\end{align*}
If $n=p(2pm+\ell_1) + a  + p_1$, then
\begin{align*}
&6n+1 = 12p^2m + 2p\ell + 6a+p,\qquad\mbox{where $\ell=3\ell_1$ and 
$0\le \ell < 6p$},
\\
&\sin\Lpar{\frac{\pi\,d\, (6n+1)}{p}} 
= \sin\Lpar{\frac{\pi\,d\, (12p^2m+2p\ell+6a+p)}{p}}
=\sin\Lpar{\frac{6ad\pi}{p}+\pi d}
\\
&\hspace{101mm}=(-1)^d\sin\Lpar{\frac{6ad\pi}{p}},
\\
&\sin\Lpar{\frac{\pi}{3}(2\ell+1)} = \frac{\sqrt{3}}{2}, \quad (-1)^n = (-1)^{\ell + a + p_1}.\\
\end{align*}
If $n=p(-2pm-\ell_1) - a  + p_1$, then
\begin{align*}
&6n+1 = -(12p^2m + 2p\ell + 6a-p),\qquad\mbox{where $\ell=3\ell_1-1$ and 
$0 < \ell \leq 6p-1$},
\\
&\sin\Lpar{\frac{\pi\,d\, (6n+1)}{p}} 
= \sin\Lpar{-\frac{\pi\,d\, (12p^2m+2p\ell+6a+p)}{p}}
= \sin\Lpar{-\frac{6ad\pi}{p}-\pi d}
\\
&\hspace{104mm}=-(-1)^d\sin\Lpar{\frac{6ad\pi}{p}},
\\
&\sin\Lpar{\frac{\pi}{3}(2\ell+1)} = -\frac{\sqrt{3}}{2}, \quad
(-1)^n = (-1)^{\ell + a + p_1}.\\
\end{align*}
Hence we have
\begin{align*}
\THA{d}{p}{z} 
&= \sum_{\substack{n=-\infty \\ 6n+1\not\equiv 0 \pmod{p}}}^\infty
                   (-1)^n (6n+1) \sin\Lpar{\frac{\pi d (6n+1)}{p}}
\exp\Lpar{3\pi iz\Lpar{n + \frac{1}{6}}^2}\\
&=
(-1)^d\sum_{a=1}^{\frac{1}{2}(p-1)}
\sum_{\ell=0}^{6p-1}
(-1)^{\ell + a + p_1} \, \sin\Lpar{\frac{\pi}{3}(2\ell+1)} \, \frac{2}{\sqrt{3}}
\, \sin\Lpar{\frac{6ad\pi}{p}} \\
&\qquad\qquad \times
\sum_{m=-\infty}^\infty (12p^2m + 2\ell p + 6a + p) \,
\exp\Lpar{\frac{\pi iz}{12}(12p^2m + 2\ell p + 6a+p)^2}\\
&=
(-1)^d\frac{2}{\sqrt{3}}(-1)^{p_1}\sum_{a=1}^{\frac{1}{2}(p-1)}(-1)^{a} \, \sin\Lpar{\frac{6 ad \pi}{p}}
 \\
&\qquad\qquad \times
\sum_{\ell=0}^{6p-1} (-1)^{\ell}\sin\Lpar{\frac{\pi}{3}(2\ell+1)}\ttwid(2\ell p + 6a + p,12p^2;p^2z)\\
&=
(-1)^d\frac{2}{\sqrt{3}} \,\chi_{12}(p) \,
\sum_{a=1}^{\frac{1}{2}(p-1)} (-1)^a 
\sin\Lpar{\frac{6ad\pi}{p}} \,\Theta_1(0,-a,p;p^2z),
\end{align*}
since $(-1)^{p_1} = \chi_{12}(p)$.

\subsubsection*{CASE 2} $p\equiv -1 \pmod{6}$. We proceed as in CASE 1
except this time we let  $p_1 = \frac{1}{6}(p+1)$
so that $6p_1-1=p$, and we find  that
each integer $n$ satisfying $6n+1\not\equiv0 \pmod{p}$ can be written uniquely as
\begin{align*}
&(i) \qquad n=p(2pm+\ell_1) + a  - p_1, \quad
     \mbox{where $1\le a \le \frac{1}{2}(p-1)$, $1 \le \ell_1 \le 2p$, and 
           $m \in \mathbb{Z}$,}\\
&\quad\mbox{or}\\
&(ii) \qquad n=p(-2pm-\ell_1) - a  - p_1, \quad
     \mbox{where $1\le a \le \frac{1}{2}(p-1)$, $0 \le \ell_1 < 2p$, and 
           $m \in \mathbb{Z}$.}
\end{align*}
The result \eqn{theta1id} follows as in CASE 1.
\end{proof}
\noindent
Remark : This result is a generalization of the $d=1$ case given in \cite[Proposition 6.8, p.237]{Ga19a}.
\subsection{Maass-form definitions}
Suppose $0\le a < c$ and $0< b < c$ are integers where $(c,6)=1$.
Following \cite{Ga19a}, we define
\beq
\varepsilon_2\Lpar{\frac{a}{c};z} :=
\begin{cases}
2\,\exp\Lpar{-3\pi iz\Lpar{\frac{a}{c}-\frac{1}{6}}^2} & 
\mbox{if $0<\frac{a}{c}<\frac{1}{6}$},\\
0 & \mbox{if $\frac{1}{6}<\frac{a}{c}<\frac{5}{6}$},\\
2\,\exp\Lpar{-3\pi iz\Lpar{\frac{a}{c}-\frac{5}{6}}^2} & 
\mbox{if $\frac{5}{6}<\frac{a}{c}<1$},
\end{cases}
\label{eq:eps2}
\eeq
\begin{align}
T_1\Lpar{\frac{a}{c};z} &:= -\frac{i}{\sqrt{3}}\int_{-\zcon}^{i\infty}
\frac{\THA{a}{c}{\tau}}{\sqrt{-i(\tau + z)}}\,d\tau, 
\label{eq:T1def}
\\
T_2\Lpar{\frac{a}{c};z} &:= 
\frac{i}{3c}\, 
\int_{-\zcon}^{i\infty} \frac{\Theta_1(0,-a,c;\tau)}{\sqrt{-i(\tau + z)}}\,d\tau.
\label{eq:T2def}
\end{align}
\\
Then the following identity follows from Proposition \propo{theta1id}.
\beq
T_1\Lpar{\frac{d}{p};z}
=
2\,\chi_{12}(p) \,
\sum_{a=1}^{\frac{1}{2}(p-1)} (-1)^{a + d + 1}
\sin\Lpar{\frac{6ad\pi}{p}} \,T_2\Lpar{\frac{a}{p};p^2z}.
\label{eq:T1id}
\eeq
We also define the following two Maass forms of weight $\frac{1}{2}$.
\begin{align}
\mathcal{G}_1\Lpar{\frac{a}{c};z} &:= \Nell{a}{c}{z} - T_1\Lpar{\frac{a}{c};z},
\label{eq:G1acdef}\\
\mathcal{G}_2\Lpar{\frac{a}{c};z} &:= \Mell{a}{c}{z} + \varepsilon_2\Lpar{\frac{a}{c};z}
- T_2\Lpar{\frac{a}{c};z}.
\label{eq:G2acdef}
\end{align}
\subsection{Modular Transformations}

For a function $F(z)$, we define
the usual weight $k$ stroke operator
\beq
 \stroke{F}{A}{k} := (ad-bc)^{k/2}\,(cz+d)^{-k} \, F\left(A\,z\right),\quad\mbox{for}\quad
A=\AMat\in\GL_2^{+}(\mathbb{Z}),
\label{eq:strokedef}
\eeq
where $k\in\frac{1}{2}\mathbb{Z}$, and when calculating $(cz+d)^{-k}$ we take the principal
value. The following theorem that concerns with the transformation of a Maass form under the congruence subgroup $\Gamma_0(p)$ is one of the main results in \cite{Ga19a}.

\begin{theorem}[{\cite[Theorem 4.1, p.218]{Ga19a}}]
\label{thm:mainthm}
Let $p>3$ be prime, suppose $1\le\ell\le(p-1)$, and define
\begin{equation}
\Fell{1}{\ell}{p}{z} := \eta(z) \, \Gell{1}{\ell}{p}{z}.
\label{eq:F1def}
\end{equation}
Then
\beq
\stroke{\Fell{1}{\ell}{p}{z}}{A}{1} = \mu(A,\ell)\,
\Fell{1}{\overline{d\ell}}{p}{z},
\label{eq:Fmult}
\eeq
where
$\mu(A,\ell) = \exp\left(\frac{3\pi i c d \ell^2}{p^2}\right) \,(-1)^{\frac{c\ell}{p}} \, (-1)^{\FL{\frac{d\ell}{p}}},$
and
$A = \AMat \in \Gamma_0(p).$
\\
Here $\overline{m}$ is the least nonnegative residue of $m\pmod{p}$.\\
\end{theorem}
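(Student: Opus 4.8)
The plan is to replace the mock object by its harmonic completion and transform there throughout. By \eqn{RNacid} the holomorphic part $\Nell{\ell}{p}{z}=\csc\Lpar{\frac{\ell\pi}{p}}\,q^{-1/24}\,R(\zeta_p^\ell,q)$ is only mock modular, but the definition in the excerpt packages it with the period integral into $\Gell{1}{\ell}{p}{z}=\Nell{\ell}{p}{z}-T_1\Lpar{\frac{\ell}{p};z}$, a bona fide weight-$\tfrac12$ harmonic Maass form whose non-holomorphic completion $-T_1\Lpar{\frac{\ell}{p};z}$ is, by \eqn{T1def}, the Eichler integral of the weight-$\tfrac32$ theta series $\THA{\ell}{p}{z}$. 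Because the stroke operator multiplies across a product when the weights add, $\stroke{\Fell{1}{\ell}{p}{z}}{A}{1}=\Lpar{\stroke{\eta}{A}{\frac12}}\Lpar{\stroke{\Gell{1}{\ell}{p}{z}}{A}{\frac12}}$, so it suffices to compute the weight-$\tfrac12$ transformation of $\Gell{1}{\ell}{p}{z}$ under $A=\AMat\in\Gamma_0(p)$ and then fold in the classical Dedekind-eta multiplier.

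First I would transform the two constituents separately, keeping in mind that neither is modular alone. For the mock part $\Nell{\ell}{p}{z}$ I would invoke the Appell--Lerch/Mordell-integral transformation theory of Zwegers and Bringmann--Ono \cite{Br-On10}: writing $R(\zeta_p^\ell,q)$ through the Hecke-type series in \eqn{Rzqid2}, its image under $z\mapsto Az$ differs from the naive modular transform by a Mordell integral. For $T_1\Lpar{\frac{\ell}{p};z}$, the integral representation \eqn{T1def} reduces the transformation to that of $\THA{\ell}{p}{z}$, which is governed by Shimura's transformation theory for the series $\ttwid(k,N;z)$; here Proposition \propo{theta1id} and the resulting identity \eqn{T1id} provide the explicit reorganization of these theta and period integrals needed to track the dependence on $\ell$. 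The crucial point, and the reason to complete in the first place, is that the Mordell-integral anomaly of $\Nell{\ell}{p}{z}$ exactly cancels the non-holomorphic defect of $T_1\Lpar{\frac{\ell}{p};z}$, so that $\Gell{1}{\ell}{p}{z}$ transforms as an honest Maass form and the output is again of the form $\Gell{1}{\cdot}{p}{z}$.

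Next I would determine how the index moves. For $A\in\Gamma_0(p)$ we have $p\mid c$ and $ad\equiv1\pmod p$, and the condition $p\mid c$ is precisely what keeps the denominator of the theta-characteristic equal to $p$ and keeps the transformation inside the single family $\{\Gell{1}{j}{p}{z}\}$, rather than mixing in the partner forms $\Gell{2}{\cdot}{p}{z}$ that the full inversion $z\mapsto-1/z$ (visible through the $p^2z$ in \eqn{T1id}) would otherwise introduce. The characteristic $\ell/p$ is carried to $d\ell/p$, and reducing modulo $p$ yields the claimed index $\overline{d\ell}$; as a check, $A=I$ gives $d=1$ and $\mu(A,\ell)=1$, while $A=\TMat$ again returns index $\ell$ with trivial multiplier.

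The hard part will be extracting the multiplier $\mu(A,\ell)=\exp\Lpar{\frac{3\pi icd\ell^2}{p^2}}(-1)^{\frac{c\ell}{p}}(-1)^{\FL{\frac{d\ell}{p}}}$ in closed form. This constant must be assembled, with mutually consistent branch choices, from the Dedekind-eta multiplier of $A$, the Gauss-sum and root-of-unity factors produced by Shimura's theta transformation of $\ttwid(k,N;z)$, the normalization of the Mordell integral completing $\Nell{\ell}{p}{z}$, and the principal-value choice of $(cz+d)^{-1/2}$ built into the stroke operator \eqn{strokedef}. The quadratic phase $\exp\Lpar{\frac{3\pi icd\ell^2}{p^2}}$ reflects the theta-characteristic shift (the factor $3$ matching the $3\pi iz$ in \eqn{Theta1acdef}), whereas the two signs $(-1)^{\frac{c\ell}{p}}$ and $(-1)^{\FL{\frac{d\ell}{p}}}$ come from the $(-1)^n$ weighting in the theta series together with the reduction of $d\ell$ modulo $p$. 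Reconciling all of these into the single clean product, uniformly in $A$ and $\ell$, is the principal technical obstacle, and it is exactly here that the preparatory identities \propo{theta1id} and \eqn{T1id} do the heavy lifting.
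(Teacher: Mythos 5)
First, a point of comparison: this paper does not prove Theorem \thm{mainthm} at all --- it is imported verbatim from \cite[Theorem 4.1]{Ga19a}, so there is no in-paper proof to match; the actual proof lives in the earlier paper, where it is carried out by explicit computation of the transformations of $\Gell{1}{\ell}{p}{z}$ and $\Gell{2}{\ell}{p}{z}$ built on the Bringmann--Ono/Zwegers Mordell-integral theory and Shimura's theta transformations. Your outline correctly names that architecture (complete $\Nell{\ell}{p}{z}$ to the harmonic Maass form $\Gell{1}{\ell}{p}{z}$, transform the pieces, fold in the $\eta$-multiplier), and your sanity checks at $A=I$ and $A=T$ are accurate. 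But what you have written is a plan, not a proof: the entire content of the theorem \emph{is} the index shift $\ell\mapsto\overline{d\ell}$ and the closed-form multiplier $\mu(A,\ell)=\exp\Lpar{\tfrac{3\pi icd\ell^2}{p^2}}(-1)^{c\ell/p}(-1)^{\FL{d\ell/p}}$, and you defer exactly these two items, declaring their derivation ``the principal technical obstacle.'' Nowhere do you compute how $\Nell{\ell}{p}{z}$ or $T_1\Lpar{\tfrac{\ell}{p};z}$ transforms under a general $A\in\Gamma_0(p)$; the assertions that the characteristic moves to $d\ell/p$, that no $\Gell{2}{\cdot}{p}{\cdot}$ terms appear, and that the anomalies cancel are restatements of the conclusion, not steps toward it.

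There is also a concrete misdirection: you claim Proposition \propo{theta1id} and identity \eqn{T1id} ``do the heavy lifting'' for this theorem, but those results serve an entirely different purpose in this paper. They express $T_1\Lpar{\tfrac{d}{p};z}$ as a linear combination of the functions $T_2\Lpar{\tfrac{a}{p};p^2z}$, and are used (via Proposition \propo{JJSid}) to pass between $\Jdpz{z}$ and $\JSdpz{z}$ in the dissection argument; they contain no matrix $A$ whatsoever, so they cannot produce a multiplier depending on $c$ and $d$, and in this paper they are applied \emph{after} Theorem \thm{mainthm} is already available, not as ingredients of its proof. To close the gap you would need the explicit generator/coset transformation formulas for $\Gell{1}{\ell}{p}{z}$ and $\Gell{2}{\ell}{p}{z}$ (the content of Sections 3--4 of \cite{Ga19a}, resting on Bringmann--Ono and Shimura), together with the case analysis that assembles the eta multiplier, the theta multiplier, and the reduction of $d\ell$ modulo $p$ into the stated $\mu(A,\ell)$. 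As it stands, the proposal identifies the right literature but proves nothing.
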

It is well-known that the matrices
$$
S=\SMat,\qquad T=\TMat.
$$
generate $\SL_2(\mathbb{Z})$, and
\beq
\stroke{\eta(z)}{T}{\tfrac{1}{2}} = \zeta_{24} \, \eta(z),\qquad
\stroke{\eta(z)}{S}{\tfrac{1}{2}} = \exp\Lpar{-\tfrac{\pi i}{4}}\, \eta(z).
\label{eq:etatrans}
\eeq
We define
\begin{equation}
\Fell{2}{\ell}{p}{z} := \eta(z) \, \Gell{2}{\ell}{p}{z}.
\label{eq:F2def}
\end{equation}
Then from \eqn{etatrans} and \cite[Theorem 4.5, p.220]{Ga19a}, we have

\beq
\stroke{\Fell{1}{\ell}{p}{z}}{S}{1} = (-i)\, \Fell{2}{\ell}{p}{z},
\label{eq:F1SF2}
\eeq
and
\beq
\Fell{2}{\ell}{p}{z+p} = \zeta_p^{\ell'}\, \Fell{2}{\ell}{p}{z} ,
\label{eq:F2z+p}
\eeq
where $\ell' \equiv  \frac{3}{2}(p-1)\ell^2\pmod{p}$. \\

These transformation identities \eqn{Fmult}, \eqn{F1SF2}, \eqn{F2z+p} will be useful when we examine the transformation of $\mathcal{K}_{p,m}(\zeta_p^d,z)$ in the next section to derive our main result concerning the symmetry of the rank function.\\


\section{Modularity conditions for generalized eta-quotients}

In \cite{Ga19a}, the first author gave an identity for  $\mathcal{K}_{p,0}(\zeta_{p},z)$, for  $p=11,13$, in terms of generalized eta-functions defined in \eqn{Geta}. The proof involves showing that both sides of the identity are weakly holomorphic modular forms of weight $1$ on the appropriate congruence subgroup. In this section, we describe the conditions for a generalized eta-quotient to be a weakly holomorphic modular form of weight $1$ on $\Gamma(p)$. Then in a later section, we derive and prove similar identities for $\mathcal{K}_{p,m}(\zeta_{p},z)$, where $p=11, 13, 17$ and $19$ and $0\leq m \leq p-1$.
\\\\
We present a general criteria for an eta-quotient $j(p,\overrightarrow{n},z)$ (see Definition \refdef{geneta}) to be a weakly holomorphic modular form of weight $1$ on $\Gamma(p)$ in the form of a theorem.
\\
\begin{theorem}\label{thm:modular}
Let $p>3$ be prime and suppose 
$\overrightarrow{n}=(n_0,n_1,n_2,\cdots ,n_{\frac{1}{2}(p-1)}) \in \mathbb{Z}^{\frac{1}{2}(p+1)}$.
Then $j(p,\overrightarrow{n},z)$ is a weakly holomorphic modular form 
of weight 1 on $\Gamma(p)$ satisfying the modularity condition 
$$
j(p,\overrightarrow{n},z)\stroke{}{A}{1}
=\exp(\tfrac{2\pi i b m}{p})j(p,\overrightarrow{n},z)
$$
for $A=\begin{pmatrix} a & b \\ c & d\end{pmatrix} \in \Gamma_1(p)$ 
provided the following conditions are met :
\begin{align*}
&(1)~n_0+\sum_{k=1}^{\frac{1}{2}(p-1)}n_k=2,
\\
&(2)~\sum_{k=1}^{\frac{1}{2}(p-1)}k^2 n_k \equiv 2m \pmod{p},
\\
&(3)~n_0+3\sum_{k=1}^{\frac{1}{2}(p-1)}n_k \equiv 0 \pmod{24}. 
\end{align*}
\end{theorem}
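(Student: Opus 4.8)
The plan is to reduce $j(p,\overrightarrow{n},z)$ to classical and generalized eta functions and then apply the transformation laws factor by factor. By \eqn{propf2},
\[
j(p,\overrightarrow{n},z)=\eta(pz)^{\,n_0+\sum_{k}n_k}\prod_{k=1}^{\frac12(p-1)}\eta_{p,k}(z)^{n_k}.
\]
Since $\eta(pz)$ has weight $\tfrac12$ while each generalized eta-factor $\eta_{p,k}$ has weight $0$, the weight of $j$ is $\tfrac12\Lpar{n_0+\sum_k n_k}$; condition $(1)$ forces this to equal $1$ and, equally importantly, makes the automorphy factor $(cz+d)^{-1}$ coming from the weight-$1$ stroke operator cancel the $(cz+d)^{+1}$ produced by $\eta(pz)^{n_0+\sum_k n_k}=\eta(pz)^2$. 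Holomorphy and non-vanishing of $j$ on $\mathfrak h$ are clear, since $\eta$ and each $\eta_{p,k}$ are holomorphic and non-vanishing there; hence the only possible singularities are at the cusps, and once the transformation law below is established, $j$ is weakly holomorphic.

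First I would record how each factor transforms under $A=\AMat\in\Gamma_1(p)$. Setting $A'=\ApMat\in\SLZ$, one has $\eta(p\,Az)=\eta(A'(pz))$, so $\eta(pz)$ returns to itself times the Dedekind multiplier of $A'$, a $24$th root of unity expressed through a Dedekind sum. For the generalized eta-factors I would invoke the Biagioli/Robins transformation used in \cite[Theorems 6.12 \& 6.14]{Ga19a}: under $A$ the index $k$ is sent to $\overline{ak}\pmod p$, but $A\in\Gamma_1(p)$ gives $a\equiv1\pmod p$, so $\overline{ak}\in\{k,\,p-k\}$, and by the symmetry \eqn{propf1} each $\eta_{p,k}$ returns to itself up to an explicit root of unity. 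Thus no permutation of the indices occurs --- this is the feature special to $\Gamma_1(p)$ --- and collecting the phases gives
\[
\strokevb{j(p,\overrightarrow{n},z)}{A}{1}=\Lpar{\nu^{\,2}\prod_{k}\psi_k^{\,n_k}}\,j(p,\overrightarrow{n},z),
\]
where $\nu$ and the $\psi_k$ are the explicit roots of unity just described.

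The crux, and the main obstacle, is to show that this accumulated multiplier equals $\exp\Lpar{\tfrac{2\pi i bm}{p}}$. I would split it into a $24$-part and a $p$-part. The rational prefactors $q^{\frac{p}{2}P_2(k/p)}$ in \eqn{Geta}, together with the Dedekind multiplier $\nu$, contribute a $24$th root of unity whose exponent is a linear combination of $n_0$ and $\sum_k n_k$; condition $(3)$, namely $n_0+3\sum_k n_k\equiv0\pmod{24}$, is precisely the congruence that trivializes it (the generalized-eta analogue of the Ligozat--Newman condition). The $p$-part arises from the quadratic phases in the generalized-eta transformation --- compare the period shift $\ell'\equiv\tfrac32(p-1)\ell^2\pmod p$ in \eqn{F2z+p} --- so each $\eta_{p,k}$ contributes a factor of the shape $\zeta_p^{\,b\,\alpha\,k^2}$ for a fixed unit $\alpha$, and after summing with weights $n_k$ the total $p$-part is $\zeta_p^{\,b\alpha\sum_k k^2 n_k}$. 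Condition $(2)$, $\sum_k k^2 n_k\equiv2m\pmod p$, then collapses this to $\exp\Lpar{\tfrac{2\pi i bm}{p}}$, as required. Finally, since every $A\in\Gamma(p)$ has $b\equiv0\pmod p$, the multiplier is trivial there, which yields weight-$1$ modularity on $\Gamma(p)$. The delicate point throughout is the exact evaluation of the Dedekind-sum and quadratic phase contributions, which I would extract from the transformation formulas already set up in Section 2 and in \cite{Ga19a} rather than rederive from scratch.
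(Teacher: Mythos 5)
Your outline follows the paper's own route: condition (1) gives the weight, the factor-by-factor transformation under $A=\AMat\in\Gamma_1(p)$ uses the Dedekind eta multiplier and the Biagioli transformation (with $a\equiv 1\pmod p$ preventing any permutation of the indices), condition (3) trivializes the eta-multiplier part, and condition (2) handles a $p$-power phase. However, there is a genuine gap at precisely the step you defer as "the crux." The Biagioli transformation gives
$$
\stroke{f_{p,k}(z)}{A}{\tfrac12} \;=\; (-1)^{kb+\FL{ka/p}}\,\exp\Lpar{\frac{\pi i ab}{p}k^2}\,\nu_{\eta}^3\Lpar{{}^pA}\, f_{p,k}(z),
$$
so the accumulated multiplier of $j(p,\overrightarrow{n},z)$ is
$(-1)^{L_1(A)}\exp\Lpar{\frac{\pi i ab}{p}L_2(A)}\nu_\eta^{L_3(A)}\Lpar{{}^pA}$, where
$L_1(A)=b\sum_k kn_k+\sum_k\FL{\frac{ka}{p}}n_k$, $L_2(A)=\sum_k k^2n_k$, and $L_3(A)=n_0+3\sum_k n_k$. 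Your split into a "24-part" (killed by (3)) and a "$p$-part" (collapsed by (2)) does not account for the sign $(-1)^{L_1(A)}$, whose exponent involves the floor terms $\FL{ka/p}$ and is not a linear combination of $n_0$ and $\sum_k n_k$; moreover the quadratic phase is $\exp\Lpar{\frac{\pi i ab}{p}L_2(A)}$, a $2p$-th root of unity rather than a $p$-th root, so condition (2) — which fixes $L_2(A)$ only modulo $p$ — determines that phase only up to sign. As written, your argument can conclude only that the multiplier equals $\pm\exp\Lpar{\tfrac{2\pi i bm}{p}}$.

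The missing idea is the parity argument that removes this sign ambiguity: one must show that $L_1(A)+\frac{ab}{p}L_2(A)-\frac{2bm}{p}$ is an \emph{even} integer. Integrality follows from (2); evenness requires proving $L_1(A)+abL_2(A)\equiv 0\pmod 2$, which the paper does by observing that $a\equiv 1\pmod p$ and $p$ odd give $\FL{\frac{ka}{p}}\equiv k(a+1)\pmod 2$, and $k^2\equiv k\pmod 2$, whence
$$
L_1(A)+abL_2(A)\equiv (a+1)(b+1)\sum_{k=1}^{\frac12(p-1)}kn_k \equiv 0 \pmod 2,
$$
the last congruence because $ad-bc=1$ forces $a$ or $b$ to be odd. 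This cancellation is not something that can simply be "extracted" from the transformation formulas of Section 2 or \cite{Ga19a}; it is a separate arithmetic step, and without it the stated multiplier identity — and hence the theorem — is not proved.
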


\begin{proof}
The Dedekind eta function is a modular form of weight $\frac{1}{2}$. Thus, $\eta(pz)^{n_0}$ contributes $\frac{n_0}{2}$ and each of the $f_{p,k}(z)^{n_k}$ contributes $\frac{n_k}{2}$ to the weight and the weight of $j(p,\overrightarrow{n},z)$ is $ \frac{n_0}{2}+\sum\limits_{k=1}^{\frac{1}{2}(p-1)}\frac{n_k}{2}$. Condition $(1)$ implies that this weight is $1$.
\\\\\\
Let $A=\begin{pmatrix} a & b \\ c & d\end{pmatrix} \in \Gamma_1(p)$. Then, by \cite[Theorem 6.14, p.243]{Ga19a}, we have
\\
$$
\eta(pz)\stroke{}{A}{1/2}= \nu_\eta({}^{p}A)\, \eta(pz),
$$
where $\nu_\eta({}^{p}A)$ is the eta-multiplier, 
$$
{}^pA = \begin{pmatrix} a & bp \\ c/p & d \end{pmatrix} \in \SL_2(\mathbb{Z}).
$$ 
We note the eta-multiplier $\nu_\eta$ is a 24th root of unity.
Then $$\eta(pAz)=\nu_\eta({}^{p}A)\sqrt{cz+d}~\eta(pz)$$ 
\\
and using the Biagioli transformation \cite[Theorem 6.12, p.243]{Ga19a} for $f_{p,k}(z)$, we have
\begin{align*}
f_{p,k}(z)\stroke{}{A}{1/2}&=(-1)^{k b + \FL{k a/p} + \FL{k/p}}\, \exp\Lpar{\frac{\pi iab}{p}k^2} \,\nu_{\eta}^3\Lpar{{}^pA}\, f_{p,k a}(z)
\\
&=(-1)^{k b + \FL{k a/p}}\, \exp\Lpar{\frac{\pi iab}{p}k^2} \,\nu_{\eta}^3\Lpar{{}^pA}\, f_{p,k}(z),
\end{align*}
assuming $1 \leq k \leq p-1$. Therefore

\begin{align*}
j(p,\overrightarrow{n},z)\stroke{}{A}{1}&=(-1)^{L_1(A)}\exp\Big(\dfrac{\pi i a b}{p}L_2(A)\Big)\,\nu_{\eta}^{L_3(A)}\Lpar{{}^pA}j(z)
\\
&=\exp\Big(\pi i L_1(A)+\dfrac{\pi i a b}{p}L_2(A)\Big)\,\nu_{\eta}^{L_3(A)}\Lpar{{}^pA}j(z),
\end{align*}
where
\begin{align*}
&L_1(A)=b\sum_{k=1}^{\frac{1}{2}(p-1)}kn_k+\sum_{k=1}^{\frac{1}{2}(p-1)}\FL{\frac{ka}{p}} n_k,
\\
&L_2(A)=\sum_{k=1}^{\frac{1}{2}(p-1)}k^2n_k,
\\
&L_3(A)=n_0+3\sum_{k=1}^{\frac{1}{2}(p-1)}n_k.
\end{align*}
\\\\
Now assume conditions $(1)-(3)$ hold, and $\begin{pmatrix} a & b \\ c & d\end{pmatrix} \in \Gamma_1(p)$. Since $L_3(A)\equiv 0 \pmod{24}$, the modularity condition holds if we can show that $L_1(A)+\dfrac{ab}{p}L_2(A)- \dfrac{2bm}{p}$ is an even integer.
\\\\\\
We have that $abL_2(A) \equiv 2bm \pmod{p}$ since $L_2(A) \equiv 2m \pmod{p}$ by $(2)$. Thus $L_1(A)+\dfrac{ab}{p}L_2(A)- \dfrac{2bm}{p}$ is an integer. We show that $L_1(A)+abL_2(A) \equiv 0 \pmod{2}$. This is sufficient to show that it is an even integer. Since $\begin{pmatrix} a & b \\ c & d\end{pmatrix} \in \Gamma_1(p)$, we have $a \equiv 1 \pmod{p}$ and $ka\equiv k \pmod{p}$ so that $ka\equiv p \FL{\frac{ka}{p}}+k$ and $\FL{\frac{ka}{p}} \equiv k(a+1) \pmod{2}$, since $p$ is odd.
\\\\
\begin{align*}
\text{Now,}~ L_1(A) &= \sum_{k=1}^{\frac{1}{2}(p-1)}(bk+\FL{\frac{ka}{p}}) n_k 
\\
&\equiv (a+b+1)\sum_{k=1}^{\frac{1}{2}(p-1)}k n_k \pmod{2}.
\\\\
L_1(A)+abL_2(A) &\equiv (a+b+1)\sum_{k=1}^{\frac{1}{2}(p-1)}kn_k+ab\sum_{k=1}^{\frac{1}{2}(p-1)}k^2 n_k \pmod{2}\\
&\equiv (a+1)(b+1)\sum_{k=1}^{\frac{1}{2}(p-1)}kn_k\equiv 0 \pmod{2}, \\ \text{which always holds since either $a$ or $b$ is odd}.
\end{align*} 

\end{proof}

We also state a lemma which will be of use later.\\
\begin{lemma}\label{lem:eta} For a prime p, 
$$\eta(z)=\eta(pz)^{1-\frac{1}{2}(p-1)}\prod_{k=1}^{\frac{1}{2}(p-1)}f_{p,k}(z).$$
\end{lemma}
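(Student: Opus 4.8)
The plan is to substitute the product definitions directly into both sides and match the infinite products and the overall power of $q$. First I would write $\eta(pz) = q^{p/24}(q^p;q^p)_\infty$, and, from \eqn{fdef}, $f_{p,k}(z) = q^{(p-2k)^2/(8p)}(q^k,q^{p-k},q^p;q^p)_\infty$, so that the right-hand side becomes a single power of $q$ times a product of $(q^p;q^p)_\infty$ factors and the factors $(q^k;q^p)_\infty(q^{p-k};q^p)_\infty$ for $1 \le k \le \tfrac12(p-1)$.

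The key combinatorial observation is that as $k$ runs over $1,\dots,\tfrac12(p-1)$, the pair $\{k,\,p-k\}$ runs over all nonzero residues modulo $p$, so that $\prod_{k=1}^{(p-1)/2}(q^k;q^p)_\infty(q^{p-k};q^p)_\infty = \prod_{r=1}^{p-1}(q^r;q^p)_\infty$. Expanding each Pochhammer symbol, this product equals $\prod_{m\ge1,\,p\nmid m}(1-q^m) = (q;q)_\infty/(q^p;q^p)_\infty$, since the positive integers $m$ with $p\nmid m$ are exactly those of the form $r+jp$ with $1\le r\le p-1$ and $j\ge0$. Combining the $\tfrac12(p-1)$ explicit factors of $(q^p;q^p)_\infty$ coming from the $f_{p,k}(z)$ with the factor $(q^p;q^p)_\infty^{\,1-(p-1)/2}$ from $\eta(pz)^{1-(p-1)/2}$, all powers of $(q^p;q^p)_\infty$ cancel and the infinite-product part of the right-hand side reduces to $(q;q)_\infty$, matching that of $\eta(z)$.

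It then remains to check the power of $q$. I would compute the total prefactor exponent $\tfrac{p}{24}\bigl(1-\tfrac12(p-1)\bigr) + \tfrac{1}{8p}\sum_{k=1}^{(p-1)/2}(p-2k)^2$, using that $p-2k$ runs over the odd integers $1,3,\dots,p-2$, so that $\sum_{k=1}^{(p-1)/2}(p-2k)^2 = \tfrac16\,p(p-1)(p-2)$. Substituting and simplifying gives exponent $\tfrac{p(3-p)+(p-1)(p-2)}{48} = \tfrac{2}{48} = \tfrac1{24}$, whence the right-hand side equals $q^{1/24}(q;q)_\infty = \eta(z)$.

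No step presents a genuine obstacle: this is an exact $q$-series identity verified by bookkeeping. The only point requiring care is the prefactor computation, where the sum of squares of the odd numbers up to $p-2$ must be evaluated correctly and combined with the fractional exponent arising from $\eta(pz)^{1-(p-1)/2}$ so that the two pieces add to exactly $\tfrac1{24}$.
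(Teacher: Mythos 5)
Your proof is correct and follows essentially the same route as the paper's: both split the product $(q;q)_\infty$ into residue classes mod $p$, pair $k$ with $p-k$ to recognize the triple products defining $f_{p,k}(z)$, and cancel the powers of $(q^p;q^p)_\infty$. In fact you are slightly more careful than the paper, which leaves the $q$-exponent bookkeeping (your computation $\tfrac{p}{24}\bigl(1-\tfrac12(p-1)\bigr)+\tfrac{1}{8p}\cdot\tfrac16 p(p-1)(p-2)=\tfrac1{24}$) implicit in the phrase ``This gives the result.''
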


\begin{proof}
\begin{align*}
\prod_{n=1}^{\infty}(1-q^n)&=\left(\prod_{k=1}^{p-1}\prod_{n=0}^{\infty}(1-q^{pn+k})\right)\prod_{n=0}^{\infty}(1-q^{pn+p})
\\
&=\left(\prod_{k=1}^{\frac{1}{2}(p-1)}(q^k;q^p)_{\infty}(q^{p-k};q^p)_{\infty}\right)(q^p;q^p)_{\infty}
\\
&=\left(\prod_{k=1}^{\frac{1}{2}(p-1)}f_{p,k}(z)\right)\dfrac{(q^p;q^p)_{\infty}^{\frac{1}{2}(p-1)}}{(q^p;q^p)_{\infty}}.
\end{align*}
This gives the result.\\
\end{proof}

\begin{definition}
Let $\mathfrak{F}(m,p)$ be the set of functions $j(p,\overrightarrow{n},z)$ that satisfy the conditions of Theorem \thm{modular}.\\
\end{definition}

\begin{definition}
\label{def:perm}
Let $p>3$ be prime. For $1\leq r \leq \frac{1}{2}(p-1)$, we define a permutation $\pi_r:[\frac{1}{2}(p-1)]\rightarrow [\frac{1}{2}(p-1)]$, where $[\frac{1}{2}(p-1)]=\{1,2,\cdots,\frac{1}{2}(p-1)\}$ by $\pi_r(i)=i'$ where $ri'\equiv \pm i \pmod{p}$.
\end{definition}
$\pi_r$ induces a permutation on $\mathbb{Z}^{\frac{1}{2}(p-1)}$. For $\overrightarrow{n}=(n_0,n_1,n_2,\cdots ,n_{\frac{1}{2}(p-1)}), \pi_r(\overrightarrow{n})$ permutes the components to $\pi_r(\overrightarrow{n})=(n_0,n_{\pi_r(1)},n_{\pi_r(2)},\cdots ,n_{\pi_r(\frac{1}{2}(p-1))})$.\\

\begin{lemma}\label{lem:jperm}
Let $p>3$ be prime and $\overrightarrow{n}$ and $j(z)$ be defined as in Definition \refdef{geneta}. Then, $$j(p,\pi_r(\overrightarrow{n}),z)=\eta(pz)^{n_0}\prod_{k=1}^{\frac{1}{2}(p-1)}f_{p,rk}(z)^{n_k}.$$
\end{lemma}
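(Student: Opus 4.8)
The statement to prove is Lemma \lem{jperm}, which asserts that applying the permutation $\pi_r$ to the exponent vector $\overrightarrow{n}$ produces the eta-quotient with the indices $k$ of the Biagioli functions $f_{p,k}$ replaced by $rk$. This is essentially a bookkeeping identity unpacking the definition of $\pi_r$ together with the reflection symmetry \eqn{propf1}.

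The plan is to start from the definition \eqn{jdef} of $j(p,\overrightarrow{n},z)$ and substitute the permuted vector $\pi_r(\overrightarrow{n})=(n_0,n_{\pi_r(1)},\dots,n_{\pi_r(\frac{1}{2}(p-1))})$. By definition this gives
$$
j(p,\pi_r(\overrightarrow{n}),z)=\eta(pz)^{n_0}\prod_{k=1}^{\frac{1}{2}(p-1)}f_{p,k}(z)^{n_{\pi_r(k)}}.
$$
First I would reindex the product: since $\pi_r$ is a permutation of $\{1,2,\dots,\frac{1}{2}(p-1)\}$, I may replace the summation index $k$ by $\pi_r^{-1}(k)$ (equivalently, set $i=\pi_r(k)$ and run over $i$), converting the exponent $n_{\pi_r(k)}$ into $n_k$ at the cost of turning the base $f_{p,k}$ into $f_{p,\pi_r^{-1}(k)}$. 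The upshot is
$$
j(p,\pi_r(\overrightarrow{n}),z)=\eta(pz)^{n_0}\prod_{k=1}^{\frac{1}{2}(p-1)}f_{p,\pi_r^{-1}(k)}(z)^{n_k}.
$$

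The heart of the argument is then to identify $f_{p,\pi_r^{-1}(k)}(z)$ with $f_{p,rk}(z)$. By Definition \refdef{perm}, $\pi_r(i)=i'$ where $ri'\equiv\pm i\pmod p$; I would instead use the characterization in terms of $\pi_r^{-1}$. Writing $j=\pi_r^{-1}(k)$ means $\pi_r(j)=k$, i.e. $rk\equiv\pm j\pmod p$. Thus $j\equiv\pm rk\pmod p$, and since $1\le j\le\frac{1}{2}(p-1)$, the value $j$ is precisely the representative of $\pm rk$ lying in $\{1,\dots,\frac{1}{2}(p-1)\}$. Now the periodicity and reflection property \eqn{propf1}, namely $f_{N,\rho}=f_{N,N+\rho}=f_{N,-\rho}$, shows that $f_{p,\rho}$ depends only on $\rho$ modulo $p$ and up to sign, so $f_{p,j}(z)=f_{p,rk}(z)$ whenever $j\equiv\pm rk\pmod p$. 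Substituting this into the product yields exactly
$$
j(p,\pi_r(\overrightarrow{n}),z)=\eta(pz)^{n_0}\prod_{k=1}^{\frac{1}{2}(p-1)}f_{p,rk}(z)^{n_k},
$$
as claimed.

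I do not anticipate a serious obstacle, as this is a reindexing combined with an invariance property; the only point requiring care is making the correspondence between $\pi_r^{-1}(k)$ and $rk$ precise. One must verify that $f_{p,rk}$ is well-defined even when $rk$ is not reduced modulo $p$ or lies outside $\{1,\dots,\frac{1}{2}(p-1)\}$ — but this is exactly guaranteed by \eqn{propf1}, which lets us replace $rk$ by its $\pm$-reduced residue without changing the function. Hence the reflection symmetry of $f_{N,\rho}$ does double duty: it is what makes $\pi_r$ well-defined (via the $\pm$ in Definition \refdef{perm}) and what lets us drop from $\pi_r^{-1}(k)$ to the raw index $rk$ in the final formula.
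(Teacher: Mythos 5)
Your proof is correct and follows essentially the same route as the paper: both start from the definition of $j(p,\pi_r(\overrightarrow{n}),z)$, reindex the product by the permutation $\pi_r$, and then use the reflection/periodicity property \eqn{propf1} (Biagioli's $f_{N,\rho}=f_{N,N+\rho}=f_{N,-\rho}$) to replace the permuted index, which is $\equiv\pm rk\pmod{p}$, by $rk$ itself. The only cosmetic difference is that you phrase the reindexing via $\pi_r^{-1}(k)$ while the paper introduces a new index $k'=\pi_r(k)$; the underlying computation is identical.
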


\begin{proof} Biagioli's transformation property gives $f_{n,\rho+n}=f_{n,-\rho}=f_{n,\rho}$ \cite[Lemma 2.1, p.278]{Bi89}. Then
\begin{align*}
j(p,\pi_r(\overrightarrow{n}),z)&=\eta(pz)^{n_0}\prod_{k=1}^{\frac{1}{2}(p-1)}f_{p,k}(z)^{n_{\pi_r(k)}}
\\
&=\eta(pz)^{n_0}\prod_{k'=1}^{\frac{1}{2}(p-1)}f_{p,rk'}(z)^{n_{k'}}~~\text{where}~~ rk'\equiv k \pmod{p}
\\
&=\eta(pz)^{n_0}\prod_{k=1}^{\frac{1}{2}(p-1)}f_{p,rk}(z)^{n_{k}}.
\end{align*}
\end{proof}

\begin{theorem}\label{thm:jtrans}
Let $p>3$ be prime, $0\leq m\leq p-1$. Suppose $j(p,\overrightarrow{n},z) \in \mathfrak{F}(m,p)$. Let $A = \begin{pmatrix}
      a & b \\ p & d 
    \end{pmatrix}
\in \Gamma_0(p), 1 \leq a,d \leq p-1
$. Then,

\beq
j(p,\overrightarrow{n},z)\strokeb{A}{1}
=(-1)^{L(\overrightarrow{n},a,b,p)}\,
\exp\Lpar{\frac{2\pi iabm}{p}}\,
  \eta(pz)^{n_0}\prod_{k=1}^{\frac{1}{2}(p-1)}f_{p,ka}(z)^{n_k}, 
\label{eq:jtrans}
\eeq 
where
\beq
L(\overrightarrow{n},a,b,p)=b\,(a+1)\sum\limits_{k=1}^{\frac{1}{2}(p-1)}kn_{k}+\sum\limits_{k=1}^{\frac{1}{2}(p-1)}\FL{\frac{ka}{p}} n_{k}. \label{eq:Lexp}
\eeq 
Also 
\beq
j(p,\pi_r(\overrightarrow{n}),z) \in \mathfrak{F}(m',p),\label{eq:jperm}
\eeq 
where $1\leq r\leq\frac{1}{2}(p-1)$ and $m'\equiv r^2m \pmod p$.
\end{theorem}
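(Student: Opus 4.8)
The plan is to treat the two claims \eqn{jtrans} and \eqn{jperm} in turn. The first is established by applying, factor by factor, the same two transformation laws used in the proof of Theorem~\thm{modular}, but now keeping $a$ arbitrary modulo $p$ rather than specializing to $a\equiv1$; the second is a direct verification of the three defining conditions of $\mathfrak{F}$ for the permuted exponent vector.

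For \eqn{jtrans}, the matrix $A$ of the statement has $c=p$, so $p\cdot Az={}^pA\cdot(pz)$ with ${}^pA=\begin{pmatrix} a & bp\\ 1 & d\end{pmatrix}\in\SL_2(\mathbb{Z})$. Applying the Dedekind transformation to ${}^pA$ then gives $\stroke{\eta(pz)}{A}{1/2}=\nu_\eta({}^pA)\,\eta(pz)$, while Biagioli's identity \cite[Theorem 6.12, p.243]{Ga19a} gives
\[
\stroke{f_{p,k}(z)}{A}{1/2}
=(-1)^{kb+\FL{ka/p}+\FL{k/p}}\,\exp\Lpar{\frac{\pi iab}{p}k^2}\,\nu_\eta^3({}^pA)\,f_{p,ka}(z).
\]
The decisive difference from the $\Gamma_1(p)$ computation is that $a\not\equiv1\pmod p$ in general, so the second index is now $ka$ rather than $k$; this is precisely what produces $\prod_k f_{p,ka}(z)^{n_k}$ on the right of \eqn{jtrans}. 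Assembling the factors of $j(p,\overrightarrow{n},z)=\eta(pz)^{n_0}\prod_k f_{p,k}(z)^{n_k}$, the eta-multiplier occurs to the power $n_0+3\sum_k n_k\equiv0\pmod{24}$ by condition~$(3)$, and since $\nu_\eta$ is a $24$th root of unity this contributes $1$. As $1\le k\le\frac{1}{2}(p-1)<p$ we have $\FL{k/p}=0$, so the accumulated sign is $(-1)^{\sum_k(kb+\FL{ka/p})n_k}$ and the accumulated phase is $\exp\Lpar{\frac{\pi iab}{p}\sum_k k^2 n_k}$.

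It remains to reconcile this phase with the exponent $L(\overrightarrow{n},a,b,p)$ in \eqn{Lexp}, and this parity bookkeeping is the one genuinely delicate point. Using condition~$(2)$, write $\sum_k k^2 n_k=2m+pt$ with $t\in\mathbb{Z}$; then the phase splits as $\exp\Lpar{\frac{2\pi iabm}{p}}(-1)^{abt}$, isolating the claimed exponential. The elementary congruence $k^2\equiv k\pmod 2$ yields $pt\equiv\sum_k k^2n_k\equiv\sum_k kn_k\pmod 2$, and since $p$ is odd this forces $t\equiv\sum_k kn_k\pmod 2$, hence $(-1)^{abt}=(-1)^{ab\sum_k kn_k}$. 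Combining this residual sign with the $(-1)^{b\sum_k kn_k}$ coming from the $kb$ terms gives $(-1)^{b(a+1)\sum_k kn_k}$, and together with the surviving $(-1)^{\sum_k\FL{ka/p}n_k}$ this is exactly $(-1)^{L(\overrightarrow{n},a,b,p)}$. This proves \eqn{jtrans}; I expect the tracking of the $(-1)^{abt}$ contribution to be the only step needing real attention.

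Finally, for \eqn{jperm} I would check that $\pi_r(\overrightarrow{n})$ satisfies conditions~$(1)$--$(3)$ of Theorem~\thm{modular} with $m'\equiv r^2m\pmod p$ in place of $m$, which is what membership in $\mathfrak{F}(m',p)$ means. Conditions~$(1)$ and $(3)$ depend only on $n_0$ and $\sum_k n_k$, both unchanged because $\pi_r$ merely permutes $n_1,\dots,n_{\frac{1}{2}(p-1)}$. For condition~$(2)$, the $k$-th entry of $\pi_r(\overrightarrow{n})$ is $n_{\pi_r(k)}$ (see \refdef{perm}); reindexing by $i=\pi_r(k)$ and using the defining relation $ri\equiv\pm k\pmod p$, whence $k^2\equiv r^2 i^2\pmod p$, gives
\[
\sum_{k=1}^{\frac{1}{2}(p-1)}k^2\,n_{\pi_r(k)}
\equiv r^2\sum_{i=1}^{\frac{1}{2}(p-1)} i^2\,n_i
\equiv 2r^2m\equiv 2m'\pmod p,
\]
which is condition~$(2)$ for $m'$. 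Hence $j(p,\pi_r(\overrightarrow{n}),z)\in\mathfrak{F}(m',p)$, completing the proof.
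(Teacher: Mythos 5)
Your proposal is correct and follows essentially the same route as the paper: both apply the Dedekind-eta and Biagioli transformations factor by factor (keeping $f_{p,ka}$ since $a\not\equiv 1\pmod p$), kill the eta-multiplier via condition $(3)$, and reconcile the sign and phase using $\sum_k k^2 n_k\equiv 2m\pmod p$ together with $k^2\equiv k\pmod 2$ and the oddness of $p$; your explicit split $\sum_k k^2n_k=2m+pt$ with the residual sign $(-1)^{abt}$ is just a repackaging of the paper's claim that $pL_1(A)+abL_2(A)-pL(A)-2abm$ is an even multiple of $p$. The verification of \eqn{jperm} by reindexing and the congruence $k^2\equiv r^2(k')^2\pmod p$ is also exactly the paper's argument, merely written out in more detail.
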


\begin{proof}
Following the proof of Theorem \thm{modular}, we use the transformation for $\eta(z)$ and $f_{p,k}(z)$ to get
\begin{align*}
j(p,\overrightarrow{n},z)\stroke{}{A}{1}&=(-1)^{L_1(A)}\exp\Big(\dfrac{\pi i a b}{p}L_2(A)\Big)\,\nu_{\eta}^{L_3(A)}\Lpar{{}^pA}\eta(pz)^{n_0}\prod_{k=1}^{\frac{1}{2}(p-1)}f_{p,ka}(z)^{n_k}
\end{align*}
where
\begin{align*}
&L_1(A)=b\sum_{k=1}^{\frac{1}{2}(p-1)}kn_k+\sum_{k=1}^{\frac{1}{2}(p-1)}\FL{\frac{ka}{p}} n_k,
\\
&L_2(A)=\sum_{k=1}^{\frac{1}{2}(p-1)}k^2n_k,
\\
&L_3(A)=n_0+3\sum_{k=1}^{\frac{1}{2}(p-1)}n_k.
\end{align*}
Since $j(p,\overrightarrow{n},z) \in \mathfrak{F}(m,p)$, we have that 
$L_2(A)\equiv 2\,m \pmod{p}$ and $L_3(A)\equiv 0 \pmod{24}$.  
It suffices to prove that 
$$
pL_{1}(A)+{ab} L_2(A) - p L(A) - 2abm 
$$
is an even multiple of $p$, where
$$
L(A) = 
L(\overrightarrow{n},a,b,p)=b\,(a+1)\sum\limits_{k=1}^{\frac{1}{2}(p-1)}kn_{k}+\sum\limits_{k=1}^{\frac{1}{2}(p-1)}\FL{\frac{ka}{p}} n_{k}. 
$$
Since $L_2(A) \equiv 2\,m\pmod{p}$
we have
$$
pL_{1}(A)+{ab} L_2(A) - p L(A) - 2abm  \equiv 0 \pmod{p}.
$$
Also 
\begin{align*}
pL_{1}(A)+abL_2(A)
    &=b\sum_{k=1}^{\frac{1}{2}(p-1)}kn_{k}
       +\sum_{k=1}^{\frac{1}{2}(p-1)}\FL{\frac{ka}{p}} n_{k}
       +ab\sum_{k=1}^{\frac{1}{2}(p-1)}k^2n_{k}
\\
&\equiv b(1+a)\sum_{k=1}^{\frac{1}{2}(p-1)}kn_{k}
  +\sum_{k=1}^{\frac{1}{2}(p-1)}\FL{\frac{ka}{p}} n_{k} \pmod{2}\\
&\equiv L(A) \pmod{2},
\end{align*}
as required.  Equation \eqn{jtrans} follows.

Now suppose $1 \le r \le \frac{1}{2}(p-1)$ so that for $1 \le i \le \frac{1}{2}(p-1)$
we have
$\pi_r(i) = i'$ where $1 \le i'\le \frac{1}{2}(p-1)$ and $ri' \equiv\pm i \pmod{p}$.
We note that
\begin{align*}
&\sum_{k=1}^{\frac{1}{2}(p-1)}n_k=\sum_{k=1}^{\frac{1}{2}(p-1)}n_{\pi_r(k)},~\text{and}\\
&\sum_{k=1}^{\frac{1}{2}(p-1)}k^2 n_{\pi_r(k)}=\sum_{k=1}^{\frac{1}{2}(p-1)}k^2 n_{k'} \equiv r^2\sum_{k'=1}^{\frac{1}{2}(p-1)}(k^{'})^2n_{k'} \pmod {p}.
\end{align*}
Equation \eqn{jperm} follows easily.
\end{proof}


\section{Main symmetry result}

In this section, we give a proof of our main result illustrating the symmetry of Dyson's rank function. We restate the theorem :

\begin{theorem}
\label{thm:newKpmtrans}
Suppose $p>3$ prime, $0 \le m \le p-1$ and $1 \le d \le p-1$. Let $\mathcal{K}_{p,m}(\zeta_p^d, z)$ be as in Definition \refdef{Kpm}. Then
$$\stroke{\mathcal{K}_{p,m}(\zeta_p, z)}{A}{1}
= \frac{\sin(\pi/p)}{\sin(d\pi/p)}\,(-1)^{d+1}\,
\exp\Lpar{\frac{2\pi imak}{p}}\,\mathcal{K}_{p,ma^2}(\zeta_p^d,z),$$ assuming $1 \le a,d \le (p-1)$ and
$
A = \begin{pmatrix}
      a & k \\ p & d 
    \end{pmatrix}
\in \Gamma_0(p).
$         

\end{theorem}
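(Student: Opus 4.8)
The plan is to realize $\mathcal{K}_{p,m}(\zeta_p,z)$ as a single residue component of the $p$-dissection of the mother function $\eta(p^2z)\,\Rpzb{p}{\zeta_p,z}$, and to push the stroke operator through the dissection using the three transformation laws already at hand: the $\Gamma_0(p)$-action \eqn{Fmult}, the $S$-transform \eqn{F1SF2}, and the quasi-periodicity \eqn{F2z+p}. Throughout I write $a_0$ for the index of Definition \refdef{Kpm} (the one with $-24m\equiv(6a_0)^2\pmod p$), to avoid clashing with the entry $a$ of $A$.

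First I would rewrite the mother function in terms of the Maass building blocks. Combining \eqn{RNacid} with \eqn{Ndef} gives $q^{-1/24}R(\zeta_p^d,q)=\sin(d\pi/p)\,\Nell{d}{p}{z}$, while \eqn{G1acdef} gives $\Nell{d}{p}{z}=\Gell{1}{d}{p}{z}+T_1\Lpar{\frac{d}{p};z}$. By \propo{theta1id} and the identity \eqn{T1id}, the correction in $\Rpzb{p}{\zeta_p^d,z}$ (equivalently, the $\Phi_{p,a_0}$ terms in Definition \refdef{Kpm}) accounts exactly for $\sin(d\pi/p)\,T_1\Lpar{\frac{d}{p};z}$, so that $\Rpzb{p}{\zeta_p^d,z}=\sin(d\pi/p)\,\Gell{1}{d}{p}{z}$; a short sum-to-product computation turns the cyclotomic coefficient there into a constant multiple of $\sin(d\pi/p)\sin(6a_0d\pi/p)$, matching the $\Phi_{p,a_0}$ term in case (ii) of \refdef{Kpm}. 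Thus, up to the explicit scalar $\sin(d\pi/p)$, the mother function is built from $\Fell{1}{d}{p}{z}=\eta(z)\,\Gell{1}{d}{p}{z}$. This already isolates the ratio $\frac{\sin(\pi/p)}{\sin(d\pi/p)}$: the source object ($d=1$) carries $\sin(\pi/p)$ whereas the target $\mathcal{K}_{p,ma^2}(\zeta_p^d,z)$ carries $\sin(d\pi/p)$, so converting between them yields this quotient, the sign $(-1)^{d+1}$ arising from reducing $\overline{d\ell}$ and the oddness of the sine.

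Next I would write the dissection explicitly as $\mathcal{K}_{p,m}(\zeta_p,z)=\frac{1}{p}\sum_{j=0}^{p-1}\zeta_p^{-mj}\,G(z+j)$ for the mother function $G$ (carrying a $q^{1/p}$-expansion), apply the stroke operator, and commute it past the translations via $\stroke{G(z+j)}{A}{1}=\stroke{G}{T^jA}{1}$, where $T^j=\begin{pmatrix}1&j\\0&1\end{pmatrix}$. The essential point is that $T^jA=\begin{pmatrix}a+jp&k+jd\\p&d\end{pmatrix}$ retains the bottom row $(p,d)$, so \eqn{Fmult} sends the index $\ell\mapsto\overline{d\ell}$ \emph{uniformly in $j$}; this is what makes the replacement $\zeta_p\mapsto\zeta_p^d$ global rather than $j$-dependent. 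Passing to the $\Fell{2}{\ell}{p}{p^2z}$-picture through \eqn{F1SF2} and \eqn{F2z+p} then renders the residue classes of the $q^{1/p}$-exponents transparent and aligns them with the correspondence $-24m\equiv(6a_0)^2\pmod p$ of Definition \refdef{Kpm}.

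Finally I would collect the multipliers. The $j$-dependence of $\mu(T^jA,\ell)$ from \eqn{Fmult} combines with the dissection weights $\zeta_p^{-mj}$, and the geometric sum $\frac{1}{p}\sum_{j=0}^{p-1}e^{2\pi ij(n-m')/p}$ simultaneously selects the new residue class and generates the phase. The principal obstacle, as in \cite{Ga19a}, is exactly this bookkeeping of the quadratic exponents: one must show that the class $m$ is carried to $ma^2$. Here $ad-pk=1$ gives $ad\equiv1\pmod p$ and $-24(ma^2)\equiv(6a_0a)^2$, which also shows $\leg{-24(ma^2)}{p}=\leg{-24m}{p}$, so the residue and non-residue cases (i) and (ii) of \refdef{Kpm} are each preserved. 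One then verifies that the accumulated roots of unity telescope to $\exp\Lpar{\frac{2\pi imak}{p}}$ with overall sign $(-1)^{d+1}$. The genuinely delicate part is case (ii), where the extra $\Phi_{p,a_0}$ terms must transform compatibly with the main term; once the $d=1$ accounting is complete, the general $A$ could instead be assembled from the known $\Gamma_1(p)$ result \thm{KpmthmG} together with coset representatives for $\Gamma_0(p)/\Gamma_1(p)\cong(\mathbb{Z}/p)^\times$.
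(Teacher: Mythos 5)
Your overall architecture is the same as the paper's (realize $\mathcal{K}_{p,m}$ as the $U_{p,m}$-component of a mother function and push the stroke operator through the dissection using \eqn{Fmult}, \eqn{F1SF2}, \eqn{F2z+p}), but your foundation contains a genuine error, and it removes exactly the hardest part of the proof. You claim the $\Phi_{p,a_0}$ corrections ``account exactly for $\sin(d\pi/p)\,T_1\Lpar{\frac{d}{p};z}$'', so that $\Rpz{\zeta_p^d,z}=\sin(d\pi/p)\,\Gell{1}{d}{p}{z}$ and the mother function is built from $\Fell{1}{d}{p}{z}$ alone. This cannot be right: $\Rpz{\zeta_p^d,z}$ is holomorphic, while $\Gell{1}{d}{p}{z}$ is a genuinely non-holomorphic Maass form, and the holomorphic series $\Phi_{p,a}$ cannot cancel the period integral $T_1$. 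The $\Phi_{p,a}$ terms correspond instead to the family $\Mell{\ell}{p}{p^2z}+\varepsilon_2\Lpar{\frac{\ell}{p};p^2z}$, and the correct statement (the paper's Propositions \propo{RpJ} and \propo{JJSid}) is
\begin{equation*}
\eta(p^2z)\,\Rpz{\zeta_p^d,z}
=\sin\Lpar{\tfrac{d\pi}{p}}\left[\frac{\eta(p^2z)}{\eta(z)}\,\Fell{1}{d}{p}{z}
-2\,\chi_{12}(p)\sum_{\ell=1}^{\frac{1}{2}(p-1)}(-1)^{\ell+d+1}
\sin\Lpar{\tfrac{6\ell d\pi}{p}}\,\Fell{2}{\ell}{p}{p^2z}\right],
\end{equation*}
where it is the \emph{non-holomorphic parts of the two families} that cancel against each other, via \eqn{T1id}. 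Dropping the $\mathcal{F}_2$ family is not a simplification you can afford: in the quadratic-residue case those terms survive $U_{p,m}$ (Proposition \propo{F2transU}) and produce precisely the $\Phi_{p,a}$ term in Definition \refdef{Kpm}(ii), and their transformation under $A$ --- the paper's Proposition \propo{F2transUA}, which needs Lemma \lem{sineq} and the verification of the root-of-unity identity \eqn{muzetaid} --- is the most delicate part of the whole argument. Your proposal acknowledges this case is ``delicate'' but supplies no mechanism for it.

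Two further problems. First, your ``uniform in $j$'' commutation applies \eqn{Fmult} to the matrices $T^jA$ with bottom row $(p,d)$, but the mother function carries the prefactor $\eta(p^2z)/\eta(z)$, which is modular only on $\Gamma_0(p^2)$, not on $\Gamma_0(p)$; the paper must instead rearrange $T_rA=B_rT_{r'}$ with $B_r\in\Gamma_0(p^2)$ and $T_r=\begin{pmatrix}1&r\\0&p\end{pmatrix}$ (Proposition \propo{F1transUA}), and it is the resulting reindexing $r\equiv r'a^2-ak\pmod{p}$ that actually produces both the class change $m\mapsto ma^2$ and the phase $\zeta_p^{mak}$; neither comes from applying \eqn{Fmult} directly to a fixed dissection index. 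Second, your fallback of assembling general $A$ ``from the known $\Gamma_1(p)$ result together with coset representatives for $\Gamma_0(p)/\Gamma_1(p)$'' is circular: Theorem \thm{KpmthmG} says nothing about how a nontrivial coset representative acts, and computing that action is precisely the content of the theorem you are trying to prove.
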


\subsection{Reformulating the theorem} We need the following to write the theorem in an equivalent form.
\\\\
We assume $p>3$ is prime, and define
\begin{align}      
&\Jdpz{z}{}
\label{eq:Jdef}\\
&= \eta(p^2 z) \,
\left(\Nell{d}{p}{z} - 
2\,\chi_{12}(p) \,
\sum_{\ell=1}^{\frac{1}{2}(p-1)} (-1)^{\ell + d + 1}
\sin\Lpar{\frac{6d\ell\pi}{p}} \,
\left(
\Mell{\ell}{p}{p^2z} + \varepsilon_2\Lpar{\frac{\ell}{p};p^2z}
\right)\right),
\nonumber          
\end{align}
where $\chi_{12}(n)$ is defined in \eqn{chi12}.
Using \eqn{Macid}, \eqn{RNacid} we deduce that

\begin{prop}
\label{propo:RpJ} 
Let $p$ be a prime and $\Rpz{\zeta_p,z}$ be defined as in \eqn{Rpdef}. Then 
$$
\eta(p^2 z) \, \Rpz{\zeta_p^d,z} 
=
\sin\Lpar{\frac{d\pi}{p}} \, \Jdpz{z}{}.
$$
\end{prop}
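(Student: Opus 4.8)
The plan is to expand both sides as explicit $q$-series with cyclotomic coefficients and match them term by term; neither side involves anything beyond the building blocks $R(\zeta_p^d,q)$, $\Phi_{p,a}(q^p)$, and powers of $q$. Throughout I read $\Rpz{\zeta_p^d,z}$ as the expression \eqn{Rpdef} with $\zeta_p$ replaced by $\zeta_p^d$ (so $R(\zeta_p,q)\to R(\zeta_p^d,q)$ and $\zeta_p^{j}\to\zeta_p^{dj}$), noting that the $\Phi_{p,a}$ are independent of the root of unity. The first step disposes of the rank term: combining \eqn{RNacid} with \eqn{Ndef} gives
$$
\sin\Lpar{\frac{d\pi}{p}}\,\Nell{d}{p}{z}=q^{-\frac{1}{24}}\,R(\zeta_p^d,q),
$$
so after multiplying \eqn{Jdef} by $\sin(d\pi/p)$ its leading summand is exactly $\eta(p^2z)\,q^{-1/24}R(\zeta_p^d,q)$, matching the leading summand of $\eta(p^2z)\,\Rpz{\zeta_p^d,z}$.

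The heart of the argument is a single identity for the Maass-form pieces,
$$
\Mell{\ell}{p}{p^2z}+\varepsilon_2\Lpar{\frac{\ell}{p};p^2z}
=2\,q^{\frac{\ell}{2}(p-3\ell)-\frac{p^2}{24}}\,\Phi_{p,\ell}(q^p),
\qquad 1\le\ell\le\tfrac12(p-1).
$$
To prove it I would start from \eqn{Macid} with $c=p$ and nome $q^p$ (i.e.\ $z\mapsto pz$), which by \eqn{phipadef} reads $\Phi_{p,\ell}(q^p)=\delta_\ell+q^{p\ell}\,\Mac{\ell}{p}{p^2z}$, where $\delta_\ell=1$ if $0<6\ell<p$ and $\delta_\ell=0$ if $p<6\ell<3p$. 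Feeding this into \eqn{Mdef} and collapsing the prefactor (the exponent of the nome $q^{p^2}$ is $\frac{3\ell}{2p}(1-\frac{\ell}{p})-\frac{1}{24}$, so multiplying by $p^2$ gives $\frac{3\ell}{2}(p-\ell)-\frac{p^2}{24}$, and absorbing the factor $q^{-p\ell}$ from $\Mac{\ell}{p}{p^2z}$ leaves $\frac{\ell}{2}(p-3\ell)-\frac{p^2}{24}$) yields $\Mell{\ell}{p}{p^2z}=2q^{\frac{\ell}{2}(p-3\ell)-\frac{p^2}{24}}(\Phi_{p,\ell}(q^p)-\delta_\ell)$. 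It then remains to show the $\varepsilon_2$ term restores exactly the missing $\delta_\ell$: since $1\le\ell\le\frac12(p-1)$ forces $0<\frac{\ell}{p}<\frac12$, only the first and middle cases of \eqn{eps2} occur, and they occur precisely according to $6\ell<p$ or $6\ell>p$ (equality is impossible for $p>3$ prime). In the middle range $\varepsilon_2=0=\delta_\ell$, while for $0<\frac{\ell}{p}<\frac16$ a short computation gives $\varepsilon_2(\frac{\ell}{p};p^2z)=2q^{-\frac{(6\ell-p)^2}{24}}=2q^{\frac{\ell}{2}(p-3\ell)-\frac{p^2}{24}}$, cancelling the $-\delta_\ell=-1$. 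This is the step I expect to be the main obstacle, since it requires carefully tracking $q$-powers through the rescaling $z\mapsto p^2z$ across three separate definitions and confirming that the boundary constant of $\Phi_{p,\ell}$ and the exponent of $\varepsilon_2$ conspire exactly.

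The remaining input is purely trigonometric. Writing $\zeta_p^x+\zeta_p^{-x}=2\cos(2\pi x/p)$ and applying $\cos X-\cos Y=-2\sin\frac{X+Y}{2}\sin\frac{X-Y}{2}$ with $X=\frac{2\pi}{p}(3ad+\frac{d(p+1)}{2})$, $Y=\frac{2\pi}{p}(3ad+\frac{d(p-1)}{2})$, together with $\sin(\theta+\pi d)=(-1)^d\sin\theta$, I would obtain
$$
\zeta_p^{d(3a+\frac{p+1}{2})}+\zeta_p^{-d(3a+\frac{p+1}{2})}
-\zeta_p^{d(3a+\frac{p-1}{2})}-\zeta_p^{-d(3a+\frac{p-1}{2})}
=-4\,(-1)^d\,\sin\Lpar{\frac{d\pi}{p}}\,\sin\Lpar{\frac{6ad\pi}{p}}.
$$

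Finally I would assemble the pieces. Substituting this identity into \eqn{Rpdef} (with $\zeta_p\to\zeta_p^d$) pulls out a common factor $\sin(d\pi/p)$ and turns the correction sum of $\eta(p^2z)\,\Rpz{\zeta_p^d,z}$ into
$$
4\,\chi_{12}(p)\,\sin\Lpar{\frac{d\pi}{p}}\sum_{a=1}^{\frac12(p-1)}(-1)^{a+d}\sin\Lpar{\frac{6ad\pi}{p}}\,q^{\frac{a}{2}(p-3a)-\frac{p^2}{24}}\,\Phi_{p,a}(q^p).
$$
Multiplying \eqn{Jdef} by $\sin(d\pi/p)$ and inserting the key identity for $\Mell{\ell}{p}{p^2z}+\varepsilon_2$ produces exactly the same sum, since $-2\cdot2\cdot(-1)^{\ell+d+1}=4(-1)^{\ell+d}$ matches the signs with $\ell$ in the role of $a$. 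As the leading terms already agree, the two sides coincide, giving the claimed identity; the only bookkeeping to double-check here is the constant $-2$ in \eqn{Jdef}, the factor $2$ from the key identity, and the parity of $(-1)^{\ell+d+1}$ versus $(-1)^{a+d}$.
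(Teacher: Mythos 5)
Your proposal is correct and follows essentially the same route as the paper's proof: the paper likewise passes from \eqn{Rpdef} to the form \eqn{Rpdef2} (your explicit product-to-sum computation is exactly what underlies the paper's "From \eqn{Rpdef} we find that" step), and then establishes your key identity $\Mell{\ell}{p}{p^2z}+\varepsilon_2\Lpar{\tfrac{\ell}{p};p^2z}=2q^{\frac{\ell}{2}(p-3\ell)-\frac{p^2}{24}}\Phi_{p,\ell}(q^p)$ via \eqn{Macid} and the same case analysis on $0<6\ell<p$ versus $p<6\ell<3p$, using the auxiliary series $\twidit{\Phi}_{p,a}$ in place of your $\delta_\ell$ bookkeeping. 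All signs and $q$-exponents in your computation check out against \eqn{Mdef}, \eqn{eps2}, and \eqn{Jdef}.
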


\begin{proof}
From \eqn{Rpdef} we find that\\
\beq
\Rpz{\zeta_p^d, z} 
:= q^{-\frac{1}{24}} R(\zeta_p^d,q) 
- 
4\,\chi_{12}(p) \,
\sum_{a=1}^{\frac{1}{2}(p-1)} (-1)^{a + d + 1}
 \,
  \sin\Parans{\frac{d\pi}{p}}\,\sin\Parans{\frac{6da\pi}{p}}\,
q^{\tfrac{a}{2}(p-3a)-\tfrac{p^2}{24}}\,\Phi_{p,a}(q^p),
\label{eq:Rpdef2}
\eeq              
for $1 \le d \le (p-1)$.
\\\\
Then
\begin{align*}
&\eta(p^2 z) \, \Rpz{\zeta_p^d,z} \\
&=\eta(p^2 z)\left(
q^{-\frac{1}{24}} R(\zeta_p^d,q) 
- 
4\,\chi_{12}(p) \,
\sum_{a=1}^{\frac{1}{2}(p-1)} (-1)^{a + d + 1}
 \,
  \sin\Parans{\frac{d\pi}{p}}\,\sin\Parans{\frac{6da\pi}{p}}\,
q^{\tfrac{a}{2}(p-3a)-\tfrac{p^2}{24}}\,\Phi_{p,a}(q^p)\right)
\end{align*}
And from the definition of $\Jdpz{z}{}$ in \eqn{Jdef} and from \eqn{Mdef}, we have
\\
\begin{align*}
&\sin\Lpar{\frac{d\pi}{p}} \, \Jdpz{z}{}\\
&=\eta(p^2 z)\left(
\sin\Lpar{\frac{d\pi}{p}}\Nell{d}{p}{z}
 \right.\\
&\qquad \left. - 
2\,\chi_{12}(p) \,
\sum_{a=1}^{\frac{1}{2}(p-1)} (-1)^{a + d + 1}
 \,
  \sin\Parans{\frac{d\pi}{p}}\,\sin\Parans{\frac{6da\pi}{p}}\,
\Lpar{\Mell{a}{p}{p^2z} + \varepsilon_2\Lpar{\frac{a}{p};p^2z}}
\right)\\
&=\eta(p^2 z)\left(
q^{-1/24}\Nac{d}{p}{z}
 \right.\\
&\qquad \left. - 
4\,\chi_{12}(p) \,
\sum_{a=1}^{\frac{1}{2}(p-1)} (-1)^{a + d + 1}
 \,
  \sin\Parans{\frac{d\pi}{p}}\,\sin\Parans{\frac{6da\pi}{p}}\,
\Lpar{q^{3/2a(p-a)-p^2/24}\Mac{a}{p}{p^2z} + \frac{1}{2}
\varepsilon_2\Lpar{\frac{a}{p};p^2z}}
\right).
\end{align*}    
Define
\beq
\twidit{\Phi}_{p,a}(q) 
:= 
\sum_{n=0}^\infty \frac{q^{pn^2}} 
{(q^a;q^p)_{n+1} (q^{p-a};q^p)_n}.
\eeq
Then
$$
\twidit{\Phi}_{p,a}(q^p) 
= 1 + q^{ap} \Mac{a}{p}{p^2z}.
$$
which gives
$$
q^{3/2a(p-a)-p^2/24}\Mac{a}{p}{p^2z}
= q^{a/2(p-3a)-p^2/24}\left(\twidit{\Phi}_{p,a}(q^p)-1\right).
$$
Also from \eqn{eps2}, we have
$$
\frac{1}{2}
\varepsilon_2\Lpar{\frac{a}{p};p^2z}
=
\begin{cases}
q^{a/2(p-3a)-p^2/24} & \mbox{if $0 < 6a < p$}\\
0 & \mbox{if $ p < 6a < 5p$}
\end{cases},
$$
so that
$$
q^{3/2a(p-a)-p^2/24}\Mac{a}{p}{p^2z} + \frac{1}{2}
\varepsilon_2\Lpar{\frac{a}{p};p^2z}
= 
q^{a/2(p-3a)-p^2/24}{\Phi}_{p,a}(q^p).
$$
Then we have
\begin{align*}
&\sin\Lpar{\frac{d\pi}{p}} \, \Jdpz{z}{}\\
&=
\eta(p^2 z)\left(
q^{-1/24}\Nac{d}{p}{z}
- 
4\,\chi_{12}(p) \,
\sum_{a=1}^{\frac{1}{2}(p-1)} (-1)^{a + d + 1}
 \,
  \sin\Parans{\frac{d\pi}{p}}\,\sin\Parans{\frac{6da\pi}{p}}\,
q^{\tfrac{a}{2}(p-3a)-\tfrac{p^2}{24}}\,\Phi_{p,a}(q^p)\right)
\\
&=
\eta(p^2 z) \, \Rpz{\zeta_p^d,z}.
\end{align*}

\end{proof}

\begin{definition}
For $p$ prime, we define the (weight $k$) Atkin $U_p$ operator by
\beq
\stroke{F}{U_p}{k} := \frac{1}{{p}} \sum_{r=0}^{p-1} F\Lpar{\frac{z+r}{p}} 
= p^{\frac{k}{2}-1}\sum_{n=0}^{p-1} \stroke{F}{T_r}{k},
\label{Updef}
\eeq
where
$$T_r = \begin{pmatrix} 1 & r \\ 0 & p \end{pmatrix},$$
and the more general $U_{p,m}$ defined by\\
\beq
\stroke{F}{U_{p,m}}{k} := \frac{1}{p} \sum_{r=0}^{p-1} 
\exp\Lpar{-\frac{2\pi irm}{p}}\,F\Lpar{\frac{z+r}{p}} 
= p^{\frac{k}{2}-1} 
\sum_{r=0}^{p-1} \exp\Lpar{-\frac{2\pi irm}{p}}\, \stroke{F}{T_r}{k}. 
\label{Upkdef}
\eeq\\
\end{definition}
\noindent
We note that $U_p = U_{p,0}$. 
\\\\
In addition, if $F(z) = \sum\limits_n a(n) q^n = \sum\limits_n a(n)\,\exp(2\pi izn)$,
then 
\\
$$\stroke{F}{U_{p,m}}{k} = q^{m/p} \sum\limits_n a(pn+m)\,q^n = \exp(2\pi imz/p)\, \sum\limits_n a(pn+m)\,\exp(2\pi inz).$$

Combining Equation \eqn{Rpdef2} and Proposition \propo{RpJ}, we have
\begin{prop}
\label{propo:Kpm2}
For $p>3$ be a prime and $0 \le m \le p-1$ we have
\beq
\mathcal{K}_{p,m}(\zeta_p^d,z) =  \sin\Lpar{\frac{d\pi}{p}} \, 
\stroke{\Jdpz{z}{}}{U_{p,m}}{1},
\label{eq:Kpmdef}
\eeq
where $\Jdpz{z}{}$ is defined in \eqn{Jdef}.
\end{prop}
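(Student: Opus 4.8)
The plan is to read the claim through Proposition \propo{RpJ}: since that result gives $\eta(p^2 z)\,\Rpz{\zeta_p^d,z} = \sin\Lpar{\frac{d\pi}{p}}\,\Jdpz{z}{}$ and the Atkin operator $U_{p,m}$ is linear with $\sin\Lpar{\frac{d\pi}{p}}$ a constant, the asserted identity \eqn{Kpmdef} is equivalent to the single statement $\mathcal{K}_{p,m}(\zeta_p^d,z) = \stroke{\eta(p^2 z)\,\Rpz{\zeta_p^d,z}}{U_{p,m}}{1}$. So the whole proof reduces to applying $U_{p,m}$ to the explicit $q$-series for $\eta(p^2 z)\,\Rpz{\zeta_p^d,z}$ and matching the outcome, case by case, against Definition \refdef{Kpm}.

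Next I would expand $\eta(p^2 z)\,\Rpz{\zeta_p^d,z}$ using \eqn{Rpdef2}, the product form $\eta(p^2 z) = q^{p^2/24}(q^{p^2};q^{p^2})_\infty$, and the rank expansion $R(\zeta_p^d,q) = \sum_{n\ge0}\Lpar{\sum_{k=0}^{p-1} N(k,p,n)\zeta_p^{kd}}q^n$ from \eqn{Rzetaid}. The powers of $q$ combine cleanly: writing $s_p=\frac1{24}(p^2-1)$, an integer since $p>3$, the main piece becomes $q^{s_p}(q^{p^2};q^{p^2})_\infty\,R(\zeta_p^d,q)$, while in each correction term the $q^{p^2/24}$ from $\eta(p^2 z)$ cancels the $q^{-p^2/24}$, leaving $4\chi_{12}(p)\,(q^{p^2};q^{p^2})_\infty\sum_a(-1)^{a+d+1}\sin\Lpar{\frac{d\pi}{p}}\sin\Lpar{\frac{6da\pi}{p}}\,q^{\frac a2(p-3a)}\,\Phi_{p,a}(q^p)$.

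Then I would invoke the coefficient-extraction description of $U_{p,m}$ recorded after \eqref{Upkdef}: on $F=\sum a(n)q^n$ it returns $q^{m/p}\sum_n a(pn+m)q^n$, and consequently any factor that is a power series in $q^p$ passes through $U_{p,m}$ with $q^p\mapsto q$. Applied to the main piece, $(q^{p^2};q^{p^2})_\infty$ becomes $(q^p;q^p)_\infty$ and the shifted rank series yields exactly $q^{m/p}(q^p;q^p)_\infty\sum_{n\ge\CL{\frac1p(s_p-m)}}\Lpar{\sum_k N(k,p,pn+m-s_p)\zeta_p^{kd}}q^n$, which is the common first line of both cases of Definition \refdef{Kpm}.

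The crux, and the only genuine obstacle, is the correction terms. Each summand $q^{\frac a2(p-3a)}\Phi_{p,a}(q^p)$ has $\Phi_{p,a}(q^p)$ a power series in $q^p$, so $U_{p,m}$ retains it precisely when $\frac a2(p-3a)\equiv m\pmod p$; since $2\cdot\frac a2(p-3a)=a(p-3a)\equiv -3a^2\pmod p$, this condition reads $-3a^2\equiv 2m$, equivalently (multiplying by $-12$) $(6a)^2\equiv -24m\pmod p$. Hence if $m=0$ or $\leg{-24m}{p}=-1$ no $a$ with $1\le a\le\frac12(p-1)$ qualifies and every correction term is annihilated, recovering case (i); if $\leg{-24m}{p}=1$ exactly one such $a$ survives, because $(6a)^2\equiv(6a')^2\pmod p$ forces $a\equiv\pm a'$ and the range $1\le a\le\frac12(p-1)$ selects one representative, and tracking the exponent through $U_{p,m}$ produces $q^{m/p}$ times $q^{\frac1p(\frac a2(p-3a)-m)}\Phi_{p,a}(q)$ with the stated coefficient, recovering case (ii). The only care required is the bookkeeping of $q$-exponents (the integrality of $s_p$ and of $\frac a2(p-3a)$, and the shift induced by pulling $q^{\frac a2(p-3a)}$ through $U_{p,m}$); everything else is formal.
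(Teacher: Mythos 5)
Your proposal is correct and follows exactly the route the paper intends: the paper's entire "proof" is the one-line remark that the proposition follows by combining \eqn{Rpdef2} with Proposition \propo{RpJ}, and your argument is precisely the detailed execution of that combination — applying the coefficient-extraction form of $U_{p,m}$ to $\eta(p^2z)\,\Rpz{\zeta_p^d,z}$, letting the $q^p$-series factors pass through, and using the congruence $\tfrac{a}{2}(p-3a)\equiv m \pmod p \iff (6a)^2\equiv -24m\pmod p$ to sort the correction terms into the two cases of Definition \refdef{Kpm}. The residue analysis (no admissible $a$ when $m=0$ or $\leg{-24m}{p}=-1$, exactly one when $\leg{-24m}{p}=1$) and the exponent bookkeeping are both handled correctly.
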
 

Next we define
\\
\begin{equation}   
\JSdpz{z}{} 
= \frac{\eta(p^2 z)}{\eta(z)} \, \Fell{1}{d}{p}{z}
 - 
2\,\chi_{12}(p) \,
\sum_{\ell=1}^{\frac{1}{2}(p-1)} (-1)^{\ell + d + 1} 
\sin\Lpar{\frac{6\ell d\pi}{p}} \,
\Fell{2}{\ell}{p}{p^2 z}.
\label{eq:JSdef}
\end{equation}   
We have
\begin{prop}
\label{propo:JJSid}
\beq
\Jdpz{z}{} =
\JSdpz{z}{}.
\label{eq:JJSid}
\eeq
\end{prop}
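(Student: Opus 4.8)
The plan is to reduce the starred function $\JSdpz{z}{}$ to the same combination of $\Nell{d}{p}{z}$, $\Mell{\ell}{p}{p^2z}$, $\varepsilon_2$, $T_1$ and $T_2$ that appears implicitly in $\Jdpz{z}{}$, and then to observe that the only discrepancy between the two sides is precisely the theta-integral identity \eqn{T1id}. No analytic estimates are needed; the argument is a substitution followed by one cancellation.

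First I would unfold $\JSdpz{z}{}$ using the definitions \eqn{F1def}, \eqn{F2def} of $\mathcal{F}_1$ and $\mathcal{F}_2$. Since $\Fell{1}{d}{p}{z}=\eta(z)\,\Gell{1}{d}{p}{z}$, the prefactor $\eta(p^2z)/\eta(z)$ in \eqn{JSdef} cancels the $\eta(z)$ and yields $\eta(p^2z)\,\Gell{1}{d}{p}{z}$; likewise $\Fell{2}{\ell}{p}{p^2z}=\eta(p^2z)\,\Gell{2}{\ell}{p}{p^2z}$. Expanding $\mathcal{G}_1$ and $\mathcal{G}_2$ by \eqn{G1acdef} and \eqn{G2acdef} then rewrites \eqn{JSdef} as
\begin{align*}
\JSdpz{z}{}
&= \eta(p^2z)\left(\Nell{d}{p}{z} - T_1\Lpar{\frac{d}{p};z}\right) \\
&\quad - 2\,\chi_{12}(p)\sum_{\ell=1}^{\frac{1}{2}(p-1)}(-1)^{\ell+d+1}\sin\Lpar{\frac{6\ell d\pi}{p}}\,\eta(p^2z)\left(\Mell{\ell}{p}{p^2z} + \varepsilon_2\Lpar{\frac{\ell}{p};p^2z} - T_2\Lpar{\frac{\ell}{p};p^2z}\right).
\end{align*}

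Next I would subtract the definition \eqn{Jdef} of $\Jdpz{z}{}$ from this. The $\eta(p^2z)\,\Nell{d}{p}{z}$ term and the entire $\Mell{\ell}{p}{p^2z}+\varepsilon_2$ sum cancel term by term, leaving only the $T_1$ and $T_2$ contributions:
\begin{align*}
\JSdpz{z}{} - \Jdpz{z}{}
= \eta(p^2z)\left(-T_1\Lpar{\frac{d}{p};z} + 2\,\chi_{12}(p)\sum_{\ell=1}^{\frac{1}{2}(p-1)}(-1)^{\ell+d+1}\sin\Lpar{\frac{6\ell d\pi}{p}}\,T_2\Lpar{\frac{\ell}{p};p^2z}\right).
\end{align*}
The bracketed expression is exactly $-T_1\Lpar{\frac{d}{p};z}$ plus the right-hand side of \eqn{T1id} (with the summation index $a$ renamed $\ell$), so it vanishes identically, and the difference equals $\eta(p^2z)\cdot 0 = 0$. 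This gives the claimed equality \eqn{JJSid}.

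The computation is pure bookkeeping, so there is no serious obstacle; the only genuine input is the identity \eqn{T1id}, which itself rests on Proposition \propo{theta1id}. The one point to check carefully is that the sign factors $(-1)^{\ell+d+1}$ and the sine weights $\sin(6\ell d\pi/p)$ in \eqn{Jdef}, \eqn{JSdef} and \eqn{T1id} agree index for index, so that the two $T_2$-sums align and cancel against $T_1$ precisely; once that matching is verified, the proof is complete.
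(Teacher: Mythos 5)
Your proposal is correct and follows essentially the same route as the paper: unfold $\JSdpz{z}{}$ via \eqn{F1def}, \eqn{F2def}, \eqn{G1acdef}, \eqn{G2acdef} so that only the $T_1$ and $T_2$ terms fail to match \eqn{Jdef}, and then cancel them using \eqn{T1id}. The only cosmetic difference is that you verify $\JSdpz{z}{}-\Jdpz{z}{}=0$ rather than rewriting $\JSdpz{z}{}$ step by step into $\Jdpz{z}{}$, and in doing so you correctly carry the argument $p^2z$ in $\Mell{\ell}{p}{p^2z}$, $\varepsilon_2\Lpar{\frac{\ell}{p};p^2z}$, $T_2\Lpar{\frac{\ell}{p};p^2z}$, where the paper's displayed proof has a misprint writing $z$ in place of $p^2z$.
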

\begin{proof}
Using the definitions in \eqn{F1def}, \eqn{F2def}, \eqn{G1acdef} and \eqn{G2acdef}, we have
\begin{align*}
\JSdpz{z}{} &= \eta(p^2 z)\Lpar{\Gell{1}{d}{p}{z}
 - 
2\,\chi_{12}(p) \,
\sum_{\ell=1}^{\frac{1}{2}(p-1)} (-1)^{\ell + d + 1} 
\sin\Lpar{\frac{6\ell d\pi}{p}} \,
\Gell{2}{\ell}{p}{p^2 z}}\\
&= \eta(p^2 z)\left(\Nell{d}{p}{z} - T_1\Lpar{\frac{d}{p};z}\right.\\
&\quad
 - 
\left.2\,\chi_{12}(p) \,
\sum_{\ell=1}^{\frac{1}{2}(p-1)} (-1)^{\ell + d + 1} 
\sin\Lpar{\frac{6\ell d\pi}{p}} \,
\Lpar{\Mell{\ell}{p}{z} + \varepsilon_2\Lpar{\frac{\ell}{p};z}
- T_2\Lpar{\frac{\ell}{p};z}}\right)\\
&= \eta(p^2 z)\left(\Nell{d}{p}{z} \right.\\
&\quad
 - 
\left.2\,\chi_{12}(p) \,
\sum_{\ell=1}^{\frac{1}{2}(p-1)} (-1)^{\ell + d + 1} 
\sin\Lpar{\frac{6\ell d\pi}{p}} \,
\Lpar{\Mell{\ell}{p}{z} + \varepsilon_2\Lpar{\frac{\ell}{p};z}}
\right) 
\mbox{(by \eqn{T1id})}
\\
&=\Jdpz{z}{}.
\end{align*}
\end{proof}

Thus in view of \eqn{Kpmdef} and \eqn{JJSid} we have the following equivalent form of our main result Theorem \thm{newKpmtrans}.

\begin{theorem}\label{thm:Kpmid}
Let $p>3$ be a prime and $0 \le m \le p-1$. Also, let $1 \le a,d \le (p-1)$ and
$A = \begin{pmatrix}
      a & k \\ p & d 
    \end{pmatrix}
\in \Gamma_0(p).
$ Then with $\Jdpz{z}{}$ and $\JSdpz{z}{}$ as defined in \eqn{Jdef} and \eqn{JSdef} respectively, we have

\beq
\stroke{\JSdpz{z}{}}{U_{p,m}}{1} \strokeb{A}{1}
= 
(-1)^{d+1}\,
\zeta_p^{mak} 
\stroke{\JSdpz{z}{}}{U_{p,ma^2}}{1}.
\label{eq:eqnKpmid}
\eeq\\
\end{theorem}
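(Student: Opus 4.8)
The plan is to peel off the $U_{p,m}$-operator, push the cosets through $A$, and isolate a single \emph{internal} transformation of $\JSdpz{z}$ that accounts for the factor $(-1)^{d+1}$, while the coset reindexing alone produces both the cyclotomic prefactor $\zeta_p^{mak}$ and the passage from $U_{p,m}$ to $U_{p,ma^2}$. Concretely, I would first expand the left-hand side. Since the weight here is $1$ and $(cz+d)^{-1}$ carries no branch ambiguity, the weight-$1$ stroke operator is a genuine right action (the determinant and automorphy factors multiply exactly), so with $F:=\JSdpz{z}$ and the definition of $U_{p,m}$ (see \eqref{Upkdef}),
\[
\stroke{F}{U_{p,m}}{1}\strokeb{A}{1}
= p^{-1/2}\sum_{r=0}^{p-1}\zeta_p^{-rm}\,\stroke{F}{(T_rA)}{1},
\qquad T_r=\begin{pmatrix}1&r\\0&p\end{pmatrix}.
\]

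Second, I would establish the coset identity $T_rA=M_r\,T_{s}$ with $M_r\in\Gamma_0(p^2)$ and $s\equiv dk+rd^2\pmod p$: computing $M_r=T_rA\,T_s^{-1}$ forces $as\equiv k+rd\pmod p$, and since $ad\equiv1\pmod p$ this gives $s\equiv dk+rd^2$ and an integer matrix $M_r=\begin{pmatrix}a+rp&\kappa\\p^2&d-ps\end{pmatrix}$ of determinant $1$. As $d^2$ is a unit mod $p$, $r\mapsto s$ is a bijection of $\mathbb{Z}/p\mathbb{Z}$; inverting gives $r\equiv a^2s-ak\pmod p$, hence $\zeta_p^{-rm}=\zeta_p^{mak}\,\zeta_p^{-ma^2s}$. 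Reindexing the sum by $s$ therefore pulls out the constant $\zeta_p^{mak}$ and replaces the twist $\zeta_p^{-rm}$ by $\zeta_p^{-ma^2s}$, which is exactly the $U_{p,ma^2}$ weighting. Consequently the whole theorem reduces to the single internal claim
\[
\stroke{F}{M_r}{1}=(-1)^{d+1}\,F\qquad\text{for every }r,
\]
for then $\stroke{F}{(T_rA)}{1}=\stroke{(\stroke{F}{M_r}{1})}{T_s}{1}=(-1)^{d+1}\stroke{F}{T_s}{1}$ and the sum reassembles into $(-1)^{d+1}\zeta_p^{mak}\stroke{F}{U_{p,ma^2}}{1}$.

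Third, the core claim is where the building blocks of Section 2 enter and where the real work lies. I would transform each constituent of $F$ in \eqn{JSdef} under $M_r$: the quotient $\eta(p^2z)/\eta(z)$ via \eqn{etatrans}, the piece $\Fell{1}{d}{p}{z}$ via the $\Gamma_0(p)$ law \eqn{Fmult} of Theorem \thm{mainthm}, and the pieces $\Fell{2}{\ell}{p}{p^2z}$ by first writing $p^2M_rz=N_r(p^2z)$ with $N_r=\begin{pmatrix}a+rp&p^2\kappa\\1&d-ps\end{pmatrix}\in\SLZ$ and then resolving $N_r$ into $\Gamma_0(p)$-generators and $S$, using $\stroke{\Fell{1}{\ell}{p}{z}}{S}{1}=-i\,\Fell{2}{\ell}{p}{z}$ from \eqn{F1SF2} together with the $p$-periodicity $\Fell{2}{\ell}{p}{z+p}=\zeta_p^{\ell'}\Fell{2}{\ell}{p}{z}$ from \eqn{F2z+p}. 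The bottom-right residue $d-ps\equiv d\pmod p$ drives the index permutation $\ell\mapsto\overline{d\ell}$ in \eqn{Fmult}, which must be matched against the permutation induced on the coefficients $(-1)^{\ell+d+1}\sin(6\ell d\pi/p)$ so that the $\ell$-sum is carried back to itself. The main obstacle is precisely this phase bookkeeping: one must show that the $\mu(\cdot,\ell)$-multipliers, the eta-multipliers $\nu_\eta$, the factors of $-i$ from $S$, and the roots of unity $\zeta_p^{\ell'}$ all collapse to the single constant $(-1)^{d+1}$, independent of both $r$ and $\ell$. A convenient way to see the $r$-independence in advance is to note, via Proposition \propo{RpJ} and (the $\zeta_p^d$-analogue of) Theorem \thm{mainJp0}, that $\sin(d\pi/p)\,F=\eta(p^2z)\,\Rpz{\zeta_p^d,z}$ is modular of weight $1$ on $\Gamma_0(p^2)\cap\Gamma_1(p)$; since any two $M_r,M_{r'}$ satisfy $M_rM_{r'}^{-1}\in\Gamma_0(p^2)\cap\Gamma_1(p)$, the quantity $\stroke{F}{M_r}{1}$ is the same for all $r$, and it remains only to evaluate the single coset multiplier attached to the diagonal residue $(a,d)\equiv(d^{-1},d)\pmod p$ and to check that it equals $(-1)^{d+1}$.
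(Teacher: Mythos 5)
Your opening moves coincide exactly with the paper's machinery (its Proposition \propo{F1transUA}): the expansion of $U_{p,m}$, the coset identity $T_rA=M_rT_s$ with $s\equiv dk+rd^2\pmod p$ and $M_r\in\Gamma_0(p^2)$, and the reindexing $\zeta_p^{-rm}=\zeta_p^{mak}\zeta_p^{-ma^2s}$ are all correct. The gap is the ``single internal claim'' $\stroke{F}{M_r}{1}=(-1)^{d+1}F$: no member of the family $\JSdpz{z}$ is an eigenfunction of the matrices $M_r$, so this claim is false whenever $d\not\equiv 1\pmod p$ and the reduction collapses. By Theorem \thm{mainthm}, a matrix of $\Gamma_0(p)$ whose lower-right entry is $\equiv d\pmod p$ sends $\Fell{1}{\ell}{p}{z}$ to a multiplier times $\Fell{1}{\overline{d\ell}}{p}{z}$; since the diagonal of $M_r$ is $\equiv(a,d)\pmod p$, the $\Fell{1}$-component of $\JSpz{z}$ is carried to $(-1)^{d+1}\,\frac{\eta(p^2z)}{\eta(z)}\,\Fell{1}{d}{p}{z}$, while the $\Fell{1}$-component of $\JSdpz{z}$ is carried to a multiple of $\Fell{1}{\overline{d^2}}{p}{z}$, which is not the $\Fell{1}$-component of $\JSdpz{z}$ unless $d\equiv1\pmod p$. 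The true internal statement moves the index: $\stroke{\JSpz{z}}{M_r}{1}=(-1)^{d+1}\,\JSdpz{z}$. This mirrors the fact that \eqn{eqnKpmid} itself, correctly read, has $\JSpz{z}$ on the left and $\JSdpz{z}$ on the right: the identical symbol on both sides is a notational slip in the paper, and the intended reading is forced by the claimed equivalence with Theorem \thm{newKpmtrans} via Proposition \propo{Kpm2} (which identifies $\mathcal{K}_{p,m}(\zeta_p,z)$ with $\sin(\pi/p)$ times the $d=1$ function), and by the paper's own proof, which expands the left side with $\Fell{1}{1}{p}{z}$ and coefficients $(-1)^j\sin(6j\pi/p)$. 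Your closing shortcut inherits the same defect: knowing that $\stroke{F}{M_r}{1}$ is independent of $r$ does not reduce matters to ``evaluating a single multiplier'' unless you already know the image is proportional to $F$, which it is not.

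Once the internal claim is corrected to $\stroke{\JSpz{z}}{M_r}{1}=(-1)^{d+1}\,\JSdpz{z}$, your plan is viable and in fact diverges from the paper in an interesting way: you would transport the entire $\Fell{2}$-sum under $M_r$ before applying any $U$-operator, and since $M_r\in\Gamma_0(p^2)$ the resulting multipliers are pure signs, so the factor $\zeta_p^{mak}$ genuinely comes only from coset reindexing and no case split on $\leg{-24m}{p}$ is needed. The paper instead collapses the $\Fell{2}$-sum first with $U_{p,m}$ (Proposition \propo{F2transU}), which forces the residue/non-residue case distinction and makes $\zeta_p^{mak}$ emerge from the $\Gamma_0(p)$-multiplier $\mu(B,\ell)$ via Lemma \lem{sineq} and Proposition \propo{F2transUA}. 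Two details in your sketch would also need repair: the conjugated matrix governing the $\Fell{2}$-pieces, namely $SN_rS^{-1}\in\Gamma_0(p^2)$, has lower-right entry $\equiv a\pmod p$, so the induced permutation there is $\ell\mapsto\overline{a\ell}$, not $\ell\mapsto\overline{d\ell}$; and the sign bookkeeping must be checked to be independent of $r$ and $\ell$, which is precisely the content the corrected claim demands.
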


\subsection{Proof of Theorem \thm{Kpmid}}

We recall from \eqn{JSdef} that
$$                   
\JSdpz{z}{} 
= \frac{\eta(p^2 z)}{\eta(z)} \, \Fell{1}{d}{p}{z}
 - 
2\,\chi_{12}(p) \,
\sum_{j=1}^{\frac{1}{2}(p-1)} (-1)^{j + d + 1} 
\sin\Lpar{\frac{6j d\pi}{p}} \,
\Fell{2}{j}{p}{p^2 z}.
$$
Assume $1 \le a,d \le (p-1)$ 
and
$A = \begin{pmatrix}
      a & k \\ p & d 
    \end{pmatrix}
\in \Gamma_0(p).$ We determine the action of the Atkin operator and matrix $A$ on $\Fell{1}{d}{p}{z}$ and $\Fell{2}{j}{p}{p^2 z}$. We have the following.

\begin{prop}\label{propo:F1transUA}
Let $1 \le a,d \le (p-1)$ and
$A = \begin{pmatrix}
      a & k \\ p & d 
    \end{pmatrix}
\in \Gamma_0(p).
$ Then
$$
\frac{\eta(p^2 z)}{\eta(z)} \, \Fell{1}{1}{p}{z}
\strokeb{U_{p,m}}{1} \strokeb{A}{1}
= (-1)^{d+1} \zeta_p^{mak} 
\frac{\eta(p^2 z)}{\eta(z)} \, \Fell{1}{d}{p}{z}
\strokeb{U_{p,\overline{ma^2}}}{1}.
$$\\
\end{prop}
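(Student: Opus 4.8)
The plan is to reduce the claim to the cocycle property of the weight-one stroke operator together with an explicit coset rewriting of the matrices $T_r A$. Write $F(z) = \frac{\eta(p^2 z)}{\eta(z)}\,\Fell{1}{1}{p}{z}$ and $G(z) = \frac{\eta(p^2 z)}{\eta(z)}\,\Fell{1}{d}{p}{z}$, both of weight $1$. By the definition \eqref{Upkdef} of $U_{p,m}$ and the fact that integer-weight stroke operators compose via $\stroke{F}{T_r}{1}\strokeb{A}{1} = \stroke{F}{T_r A}{1}$, we have $\stroke{F}{U_{p,m}}{1}\strokeb{A}{1} = p^{-1/2}\sum_{r=0}^{p-1}\exp(-2\pi i rm/p)\,\stroke{F}{T_r A}{1}$, so it suffices to analyse each $\stroke{F}{T_r A}{1}$ and then resum.

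First I would factor $T_r A$. A direct multiplication gives $T_r A = \left(\begin{smallmatrix} a+rp & k+rd \\ p^2 & pd\end{smallmatrix}\right)$, and I would write $T_r A = A_r'\,T_{s(r)}$ with $T_{s(r)} = \left(\begin{smallmatrix}1 & s(r)\\ 0 & p\end{smallmatrix}\right)$, where $s(r)\in\{0,\dots,p-1\}$ and $A_r' = T_r A\,T_{s(r)}^{-1} = \left(\begin{smallmatrix} a+rp & * \\ p^2 & d - p\,s(r)\end{smallmatrix}\right)$. Integrality of the upper-right entry forces $s(r) \equiv a^{-1}(k+rd) \equiv d(k+rd) \pmod p$ (using $ad\equiv 1 \pmod p$), and then $A_r'\in\Gamma_0(p^2)\subseteq\Gamma_0(p)$, with $\det A_r'=1$ forced by $\det(T_rA)=p=\det T_{s(r)}$. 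Since $d^2$ is invertible modulo $p$, the assignment $r\mapsto s(r)$ is a bijection of $\mathbb{Z}/p\mathbb{Z}$.

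Next I would transform $F$ under $A_r'$. The weight-zero quotient $\eta(p^2z)/\eta(z)$ is a modular function on $\Gamma_0(p^2)$ with trivial multiplier (a standard $\eta$-quotient computation, equivalently $\nu_\eta({}^{p^2}A_r')=\nu_\eta(A_r')$ from the eta transformation of \cite{Ga19a}), hence invariant under $A_r'$ and contributing no phase. Because the lower-right entry $d - p\,s(r)$ of $A_r'$ is $\equiv d\pmod p$ with $1\le d\le p-1$, Theorem \thm{mainthm} gives $\stroke{\Fell{1}{1}{p}{z}}{A_r'}{1} = \mu(A_r',1)\,\Fell{1}{d}{p}{z}$. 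Substituting $c=p^2$, $d\mapsto d - p\,s(r)$, $\ell=1$ into the multiplier of Theorem \thm{mainthm}, the sign $(-1)^{c\ell/p}=(-1)^{p}$, the factor $\exp(3\pi i(d-ps(r)))$, and the floor term $(-1)^{\FL{(d-ps(r))/p}}$ collapse cleanly (using $1\le d\le p-1$) to $\mu(A_r',1)=(-1)^{d+1}$, independent of $r$. Thus $\stroke{F}{A_r'}{1} = (-1)^{d+1}G$, and via $\stroke{F}{T_r A}{1} = \stroke{(\stroke{F}{A_r'}{1})}{T_{s(r)}}{1}$ we obtain $\stroke{F}{T_r A}{1} = (-1)^{d+1}\,\stroke{G}{T_{s(r)}}{1}$.

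Finally I would resum and compare with $(-1)^{d+1}\zeta_p^{mak}\,\stroke{G}{U_{p,\overline{ma^2}}}{1}$. The exponent bookkeeping is the heart of the matter: since $s(r)\equiv dk+rd^2$ and $(ad)^2\equiv 1\pmod p$, one computes $s(r)\,\overline{ma^2}\equiv kma + rm \pmod p$, so $\exp(-2\pi i\,s(r)\overline{ma^2}/p) = \exp(-2\pi i kma/p)\exp(-2\pi i rm/p)$; the factor $\zeta_p^{mak}=\exp(2\pi i mak/p)$ then exactly cancels $\exp(-2\pi i kma/p)$, leaving the common $\exp(-2\pi i rm/p)$, which matches the $U_{p,m}$ weight term by term through the bijection $r\leftrightarrow s(r)$. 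The hard part will be marshalling this bookkeeping cleanly — producing the decomposition into $\Gamma_0(p^2)$ with the correct representative $s(r)$, confirming the triviality of the $\eta$-quotient multiplier so that the only surviving sign is the $(-1)^{d+1}$ from $\mu(A_r',1)$, and verifying that the residue arithmetic converts $U_{p,m}$ on $\Fell{1}{1}{p}{z}$ precisely into $\zeta_p^{mak}\,U_{p,\overline{ma^2}}$ on $\Fell{1}{d}{p}{z}$.
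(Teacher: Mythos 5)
Your proposal is correct and follows essentially the same route as the paper's proof: the decomposition $T_rA = A_r'\,T_{s(r)}$ with $s(r)\equiv dk+rd^2\pmod p$ is exactly the paper's $T_rA=B_rT_{r'}$ with $r'\equiv rd^2+dk\pmod p$, the multiplier collapse $\mu(A_r',1)=(-1)^{d+1}$ and the invariance of $\eta(p^2z)/\eta(z)$ on $\Gamma_0(p^2)$ are used identically, and your exponent identity $s(r)\,\overline{ma^2}\equiv kma+rm\pmod p$ is equivalent to the paper's $r\equiv r'a^2-ak\pmod p$. No gaps.
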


\begin{proof}
For $0 \le r \le p-1$ let
$T_r = \begin{pmatrix}1 & r \\ 0 & p \end{pmatrix},$
and
$B_r = 
\begin{pmatrix}
a + pr  & \frac{1}{p}( k + rd - r'(a+pr)) \\
p^2 & d - r' p 
\end{pmatrix},$\\
where $0 \le r'\le p-1$ is chosen so that
$r' \equiv r d^2 + dk \pmod{p}.$
Then\\
$$T_r \, A = B_r \, T_{r'},\quad 
r \equiv r' a^2 - ak \pmod{p},\quad\mbox{and}\quad
B_r \in \Gamma_0(p^2).$$\\
We apply Theorem \thm{mainthm} and the well-known result that $\frac{\eta(p^2z)}{\eta(z)}$ is a modular function on $\Gamma_0(p^2)$ when
$p>3$ is prime. We have 
\begin{align*}
\frac{\eta(p^2 z)}{\eta(z)} \, \Fell{1}{1}{p}{z}
  \strokeb{U_{p,m}}{1} \strokeb{A}{1} & = \frac{1}{\sqrt{p}} \sum_{r=0}^{p-1}
  \zeta_p^{-rm} 
  \frac{\eta(p^2 z)}{\eta(z)} \, \Fell{1}{1}{p}{z}
   \strokeb{T_r}{1} \strokeb{A}{1} 
\\
& = \frac{1}{\sqrt{p}} \sum_{r=0}^{p-1}
  \zeta_p^{-rm} 
  \frac{\eta(p^2 z)}{\eta(z)} \, \Fell{1}{1}{p}{z}
   \strokeb{B_r}{1} \strokeb{T_{r'}}{1} 
\\
& = \frac{1}{\sqrt{p}} \sum_{r=0}^{p-1}
  \zeta_p^{-rm} 
  \frac{\eta(p^2 z)}{\eta(z)} \, 
   \mu(B_r,1) 
   \Fell{1}{d}{p}{z}
   \strokeb{T_{r'}}{1} 
\\
& = \frac{1}{\sqrt{p}} (-1)^{d+1} \zeta_p^{mak} \sum_{r'=0}^{p-1}
  \zeta_p^{-r'ma^2} 
  \frac{\eta(p^2 z)}{\eta(z)} \, 
   \Fell{1}{d}{p}{z}
   \strokeb{T_{r'}}{1},
\end{align*}
 
\begin{align*}
&\text{since}\hspace{3mm}\mu(B_r,1) = \exp(\tfrac{3\pi i}{p^2} (p^2(d - pr'))) (-1)^{p}(-1)^{\FL{\frac{d - pr'}{p}}}= (-1)^{d + r' + 1 -r'} = (-1)^{d+1},
\\
&\hspace{12mm}\zeta_p^{-rm} = \zeta_p^{m(-r'a^2 +ak)}
= \zeta_p^{mak} \zeta_p^{-mr'a^2},
\end{align*}
and as $r$ runs through a complete residue system mod $p$ so does
$r'$.
The result follows.
\end{proof}

\begin{prop}\label{propo:F2transU}

Let $1 \le \ell \le \tfrac{1}{2}(p-1)$. Then
$$
\Fell{2}{\ell}{p}{p^2z}
  \strokeb{U_{p,m}}{1} 
=
\begin{cases}
\Fell{2}{\ell}{p}{pz} & \mbox{if $(6\ell)^2 \equiv -24m \pmod{p}$}\\
0 & \mbox{otherwise}
\end{cases}
$$

\end{prop}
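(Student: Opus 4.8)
The plan is to read off the precise shape of the $q$-expansion of $\Fell{2}{\ell}{p}{p^2z}$ from the quasi-periodicity \eqn{F2z+p}, and then apply the coefficient-sieving property of the operator $U_{p,m}$ recorded just after \eqref{Upkdef}. First I would analyze $\Fell{2}{\ell}{p}{z}$ itself. The relation $\Fell{2}{\ell}{p}{z+p} = \zeta_p^{\ell'}\,\Fell{2}{\ell}{p}{z}$ with $\ell'\equiv \tfrac32(p-1)\ell^2\pmod p$ says that $H(w):=\Fell{2}{\ell}{p}{pw}$ satisfies $H(w+1)=\exp(2\pi i\ell'/p)\,H(w)$; hence $H(w)\exp(-2\pi i(\ell'/p)w)$ is $1$-periodic, and so $\Fell{2}{\ell}{p}{z}=\sum_{n} c(n)\,q^{(np+\ell')/p^2}$ for suitable coefficients $c(n)$. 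In particular every exponent appearing is of the form $(np+\ell')/p^2$, so after the substitution $z\mapsto p^2 z$ we obtain a genuine $q$-series
\[
\Fell{2}{\ell}{p}{p^2z}=\sum_{n} c(n)\,q^{\,np+\ell'},
\]
in which only integer exponents $N\equiv \ell'\pmod p$ occur.

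Next I would feed this expansion into $U_{p,m}$. Writing $\Fell{2}{\ell}{p}{p^2z}=\sum_N a(N)q^N$ with $a(N)=0$ unless $N\equiv\ell'\pmod p$, the sieving identity gives $\Fell{2}{\ell}{p}{p^2z}\strokeb{U_{p,m}}{1}=\sum_{N\equiv m \,(p)} a(N)\,q^{N/p}$. Thus the result vanishes identically unless the two residue classes match, i.e.\ unless $m\equiv\ell'\pmod p$. Since $\tfrac32(p-1)\equiv-\tfrac32\pmod p$, we have $\ell'\equiv-\tfrac32\ell^2\pmod p$, and multiplying the congruence $m\equiv\ell'$ by $-24$ shows that $m\equiv\ell'\pmod p$ is exactly equivalent to $(6\ell)^2\equiv-24m\pmod p$. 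This produces the dichotomy in the statement.

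Finally, in the surviving case $m\equiv\ell'\pmod p$, every exponent $N=np+\ell'$ already satisfies $N\equiv m\pmod p$, so no terms are lost and
\[
\Fell{2}{\ell}{p}{p^2z}\strokeb{U_{p,m}}{1}=\sum_{n} c(n)\,q^{(np+\ell')/p},
\]
which is precisely the expansion obtained from $\Fell{2}{\ell}{p}{z}=\sum_n c(n)\,q^{(np+\ell')/p^2}$ under $z\mapsto pz$; that is, it equals $\Fell{2}{\ell}{p}{pz}$. I expect the only genuinely delicate point to be the first step: arguing rigorously that the quasi-periodicity \eqn{F2z+p} forces the $q$-expansion of $\Fell{2}{\ell}{p}{z}$ to be supported on the single residue class $\ell'\pmod p$ of exponents (scaled by $p^{-2}$), and then tracking these fractional exponents carefully through the sieving. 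Everything else — the reduction $\ell'\equiv-\tfrac32\ell^2\pmod p$ and the final identification with $\Fell{2}{\ell}{p}{pz}$ — is routine bookkeeping once the expansion is pinned down.
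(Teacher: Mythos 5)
Your core mechanism is the right one, and it is the same one the paper uses: the quasi-periodicity \eqn{F2z+p} combined with orthogonality of $p$-th roots of unity, together with the bookkeeping $\ell'\equiv\tfrac{3}{2}(p-1)\ell^2\equiv-\tfrac{3}{2}\ell^2\pmod{p}$, so that $m\equiv\ell'\pmod{p}$ if and only if $(6\ell)^2\equiv-24m\pmod{p}$. That translation, and your identification of the surviving series with $\Fell{2}{\ell}{p}{pz}$, are correct. The genuine gap is in your first step — precisely the one you flagged as delicate — and it is worse than you anticipate: you assume that quasi-periodicity forces an expansion $\Fell{2}{\ell}{p}{z}=\sum_n c(n)\,q^{(np+\ell')/p^2}$ with \emph{constant} coefficients $c(n)$. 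No such expansion exists, because $\Fell{2}{\ell}{p}{z}$ is not holomorphic: by \eqn{F2def} and \eqn{G2acdef} it equals $\eta(z)\bigl(\Mell{\ell}{p}{z}+\varepsilon_2\bigl(\tfrac{\ell}{p};z\bigr)-T_2\bigl(\tfrac{\ell}{p};z\bigr)\bigr)$, and the period integral $T_2$ in \eqn{T2def} has $-\overline{z}$ as its lower limit of integration, so $\Gell{2}{\ell}{p}{z}$ is a harmonic Maass form rather than a holomorphic form. A $1$-periodic (or quasi-periodic) non-holomorphic function has a Fourier expansion in $x=\Re(z)$ whose coefficients depend on $y=\Im(z)$; consequently the coefficient-sieving identity stated after \eqref{Upkdef}, which presupposes a genuine $q$-series, does not directly apply to $\Fell{2}{\ell}{p}{p^2z}$ either.

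The repair is immediate, and it collapses your argument into the paper's proof, which never invokes an expansion at all. Since $p^2\cdot\frac{z+r}{p}=pz+pr$, the definition \eqref{Upkdef} gives
$$
\Fell{2}{\ell}{p}{p^2z}\strokeb{U_{p,m}}{1}
=\frac{1}{p}\sum_{r=0}^{p-1}\zeta_p^{-rm}\,\Fell{2}{\ell}{p}{pz+pr}
=\frac{1}{p}\sum_{r=0}^{p-1}\zeta_p^{r(\ell'-m)}\,\Fell{2}{\ell}{p}{pz},
$$
where the second equality comes from iterating \eqn{F2z+p} $r$ times; the inner sum equals $p$ when $\ell'\equiv m\pmod{p}$, i.e.\ when $(6\ell)^2\equiv-24m\pmod{p}$, and vanishes otherwise. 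This works directly at the level of functions, holomorphic or not. Alternatively, your route can be salvaged by replacing the constants $c(n)$ with $y$-dependent Fourier coefficients; the sieving computation then goes through because the $r$-sum only sees the frequencies $n+\ell'/p$ in the $x$-variable, but that is strictly more work for the same conclusion.
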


\begin{proof}
By \eqn{F2z+p} we have
\begin{align*}
\Fell{2}{\ell}{p}{p^2z} \strokeb{U_{p,m}}{1} & = \frac{1}{p} \sum_{r=0}^{p-1} \zeta_p^{-rm} \Fell{2}{\ell}{p}{pz + pr} 
\\
& = \frac{1}{p} \sum_{r=0}^{p-1} \zeta_p^{-rm + \frac{3r}{2}(p-1)\ell^2} \Fell{2}{\ell}{p}{pz}.
\end{align*}
The result follows since
${-rm + \frac{3r}{2}(p-1)\ell^2}  \equiv 0 \pmod{p}$ if and only if
$(6\ell)^2 \equiv -24m \pmod{p}.$
\end{proof}

\begin{lemma}\label{lem:sineq}
Let $p$ be a prime and $1 \le \ell,\ell' \le \tfrac{1}{2}(p-1)$. Then
$$
\sin\Parans{\frac{6\ell\pi}{p}}
=
(-1)^{\ell'  + a\ell + \FL{a\ell/p}} \sin\Parans{\frac{6\ell'd\pi}{p}}.
$$

\end{lemma}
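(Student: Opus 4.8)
The plan is to collapse everything to one congruence by exploiting that $A\in\Gamma_0(p)$ forces $ad\equiv 1\pmod p$ (indeed $ad-pk=\det A=1$), together with the relation tying $\ell'$ to $\ell$ that is in force here, namely that $\ell'$ is the representative in $\{1,\dots,\tfrac12(p-1)\}$ with $a\ell\equiv\pm\ell'\pmod p$. The one trick that does all the work is that inserting the factor $ad$ costs nothing: since $\frac{6ad\ell\pi}{p}-\frac{6\ell\pi}{p}=6k\ell\pi$ is an \emph{even} multiple of $\pi$, we get $\sin\Parans{\frac{6\ell\pi}{p}}=\sin\Parans{\frac{6ad\ell\pi}{p}}$.

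Next I would strip the integer part off $a\ell$. Writing $a\ell=p\FL{a\ell/p}+s$ with $0\le s<p$ (and $s\ge 1$, since $p\nmid a$ and $1\le\ell\le\tfrac12(p-1)$ give $p\nmid a\ell$), the identity $6ad\ell=6d\,(a\ell)=6dp\FL{a\ell/p}+6ds$ yields $\sin\Parans{\frac{6ad\ell\pi}{p}}=\sin\Parans{\frac{6ds\pi}{p}}$, because $6d\FL{a\ell/p}\pi$ is again an even multiple of $\pi$. Thus $\sin\Parans{\frac{6\ell\pi}{p}}=\sin\Parans{\frac{6ds\pi}{p}}$, and everything reduces to passing from the residue $s$ to the folded index $\ell'$ and bookkeeping the resulting sign.

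The last step is a short dichotomy on whether $s\le\tfrac12(p-1)$ or $s\ge\tfrac12(p+1)$, which is exactly $a\ell\equiv+\ell'$ versus $a\ell\equiv-\ell'\pmod p$. If $s=\ell'$ then $\sin\Parans{\frac{6ds\pi}{p}}=\sin\Parans{\frac{6d\ell'\pi}{p}}$; if $s=p-\ell'$ then $\sin\Parans{\frac{6ds\pi}{p}}=\sin\Parans{6d\pi-\frac{6d\ell'\pi}{p}}=-\sin\Parans{\frac{6d\ell'\pi}{p}}$, again using that $6d\pi$ is an even multiple of $\pi$. So the sign is $+1$ exactly when $s=\ell'$. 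To close the argument I would match this against $(-1)^{\ell'+a\ell+\FL{a\ell/p}}$: substituting $a\ell=p\FL{a\ell/p}+s$ gives $\ell'+a\ell+\FL{a\ell/p}=\ell'+s+(p+1)\FL{a\ell/p}$, whose parity (since $p+1$ is even) is that of $\ell'+s$; and $\ell'+s$ is even exactly when $s=\ell'$ and odd exactly when $s=p-\ell'$ (because then $\ell'+s=p$ is odd). I expect the parity bookkeeping around the floor term to be the only delicate point; the feature that eliminates all the hard cancellations is that every reduction here contributes an even multiple of $\pi$, forced by the explicit factor $6$, while the oddness of $p$ is what makes the final parity count come out right.
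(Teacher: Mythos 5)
Your proposal is correct and is essentially the paper's own argument: both proofs rest on $ad\equiv 1\pmod p$, the dichotomy $a\ell\equiv +\ell'$ versus $a\ell\equiv-\ell'\pmod p$, the fact that the explicit factor $6$ makes every stray multiple of $\pi$ even, and the parity count $\ell'+a\ell+\FL{a\ell/p}\equiv\ell'+s\pmod 2$ using that $p$ is odd. The only difference is organizational — you reduce the left side via $\sin\Parans{\frac{6\ell\pi}{p}}=\sin\Parans{\frac{6ds\pi}{p}}$ and split into cases at the end, while the paper splits into the two cases at the outset and reduces the right side via $\ell'd\equiv\pm\ell\pmod p$.
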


\begin{proof}
CASE 1. $\ell' \equiv a\ell \pmod{p}$. 
\\\\
Then $a \ell =  p \FL{\frac{a\ell}{p}}+ \ell',$ so that 
$a \ell + \FL{\frac{a\ell}{p}} \equiv  \ell' \pmod{2}.$
\\\\
Also $\ell ' d \equiv ad \ell \equiv \ell \pmod{p},$ so that $\ell ' d  = k' p + \ell,$ for some $k'\in \mathbb{Z}$. 
\\\\
Since $\frac{6\ell ' d \pi}{p}  = 6k'\pi + \frac{6\ell\pi}{p}$, we have
$$
(-1)^{\ell'  + a\ell + \FL{a\ell/p}} \sin\Parans{\frac{6\ell'd\pi}{p}}
=
\sin\Parans{\frac{6\ell'd\pi}{p}}
=
\sin\Parans{\frac{6\ell\pi}{p}}.
$$
\\
CASE 2. $(p -\ell') \equiv a\ell \pmod{p}$. 
\\\\
Then
$a \ell =  p \FL{\frac{a\ell}{p}} + (p  - \ell'),$ so that $a \ell + \FL{\frac{a\ell}{p}} \equiv  \ell' + 1\pmod{2}.$
\\\\
Also $\ell ' d \equiv -ad \ell \equiv -\ell \pmod{p},$ so that $\ell ' d  = k'' p + (p- \ell),$ for some $k''\in \mathbb{Z}$. 
\\\\
Since $\frac{6\ell ' d \pi}{p}  = 6(k''+1)\pi - \frac{6\ell\pi}{p}.$, we have
$$
(-1)^{\ell'  + a\ell + \FL{a\ell/p}} \sin\Parans{\frac{6\ell'd\pi}{p}}
=
-\sin\Parans{\frac{6\ell'd\pi}{p}}
=
\sin\Parans{\frac{6\ell\pi}{p}}.
$$

\end{proof}

\begin{prop}\label{propo:F2transUA}
Let  $1 \le \ell,\ell' \le \tfrac{1}{2}(p-1)$, $1 \le a,d \le (p-1)$
$(6\ell)^2 \equiv -24m \pmod{p}$, and $(6\ell')^2 \equiv -24ma^2 \pmod{p}$. 
Also let
$A = \begin{pmatrix}
      a & k \\ p & d 
    \end{pmatrix}
\in \Gamma_0(p).$ 
Then
\begin{align*}
&(-1)^{\ell} \sin\Parans{\frac{6\ell\pi}{p}}
\Fell{2}{\ell}{p}{p^2z} \strokeb{U_{p,m}}{1} \strokeb{A}{1}= 
(-1)^{\ell'} \sin\Parans{\frac{6\ell'd\pi}{p}}
\zeta_p^{mak} 
\Fell{2}{\ell'}{p}{pz} .
\end{align*}

\end{prop}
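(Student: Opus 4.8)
The plan is to first collapse the $U_{p,m}$ action and then trade the $\mathcal{F}_2$-function for an $\mathcal{F}_1$-function, so that the $\Gamma_0(p)$-transformation law of Theorem \thm{mainthm} becomes applicable. Since $(6\ell)^2\equiv-24m\pmod p$, Proposition \propo{F2transU} gives $\Fell{2}{\ell}{p}{p^2z}\strokeb{U_{p,m}}{1}=\Fell{2}{\ell}{p}{pz}$, so the left-hand side equals $(-1)^\ell\sin(6\ell\pi/p)\,\Fell{2}{\ell}{p}{pz}\strokeb{A}{1}$ and the task reduces to transforming $\Fell{2}{\ell}{p}{pz}$ under $A$.

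Writing $P=\begin{pmatrix}p&0\\0&1\end{pmatrix}$, I would use $\Fell{2}{\ell}{p}{pz}=\tfrac{1}{\sqrt p}\stroke{\mathcal{F}_2(\tfrac{\ell}{p};\cdot)}{P}{1}$ together with \eqn{F1SF2} in the form $\mathcal{F}_2(\tfrac{\ell}{p};\cdot)=i\,\stroke{\mathcal{F}_1(\tfrac{\ell}{p};\cdot)}{S}{1}$ and the multiplicativity of the weight-$1$ stroke operator to get $\Fell{2}{\ell}{p}{pz}\strokeb{A}{1}=\tfrac{i}{\sqrt p}\stroke{\mathcal{F}_1(\tfrac{\ell}{p};\cdot)}{SPA}{1}$. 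The crucial step is the matrix identity $SPA=A''\,SP$ with
$$
A''=\begin{pmatrix}d&-1\\-pk&a\end{pmatrix}\in\Gamma_0(p),
$$
whose determinant is $ad-pk=1$. Applying Theorem \thm{mainthm} to $A''$ yields $\stroke{\mathcal{F}_1(\tfrac{\ell}{p};\cdot)}{A''}{1}=\mu(A'',\ell)\,\mathcal{F}_1(\tfrac{\overline{a\ell}}{p};\cdot)$; composing with $\strokeb{SP}{1}$ and running \eqn{F1SF2} backwards then returns $\Fell{2}{\ell}{p}{pz}\strokeb{A}{1}=\mu(A'',\ell)\,\Fell{2}{\overline{a\ell}}{p}{pz}$. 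Since $\Nac{a}{c}{z}$ and $\THA{a}{c}{z}$ are invariant under $a\mapsto c-a$, both $\mathcal{F}_1$ and $\mathcal{F}_2$ are even in the index modulo $p$, so I can replace $\Fell{2}{\overline{a\ell}}{p}{pz}$ by $\Fell{2}{\ell'}{p}{pz}$ for the representative $\ell'\in[1,\tfrac12(p-1)]$ with $\ell'\equiv\pm a\ell$.

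Both sides are now scalar multiples of $\Fell{2}{\ell'}{p}{pz}$, and the proposition reduces to the scalar identity
$$
(-1)^\ell\sin\!\Big(\frac{6\ell\pi}{p}\Big)\mu(A'',\ell)=(-1)^{\ell'}\sin\!\Big(\frac{6\ell'd\pi}{p}\Big)\zeta_p^{mak}.
$$
I would substitute $\mu(A'',\ell)=\exp(-3\pi ika\ell^2/p)(-1)^{k\ell}(-1)^{\FL{a\ell/p}}$ from Theorem \thm{mainthm} and the sine relation of Lemma \lem{sineq}; cancelling the common sine and the $\FL{a\ell/p}$-signs collapses this to $(-1)^{\ell(1+a+k)}\exp(-3\pi ika\ell^2/p)=\exp(2\pi imak/p)$.

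The main obstacle is this final phase-and-sign verification, where the half-integral weight is felt through the $\pi i$ (rather than $2\pi i$) exponent in $\mu$. The phase is controlled by the congruence $3\ell^2\equiv-2m\pmod p$, which is the hypothesis $(6\ell)^2\equiv-24m$ divided by $12$: writing $3\ell^2+2m=ps$, one gets $-3ka\ell^2=2mak-paks$, hence $\exp(-3\pi ika\ell^2/p)=\exp(2\pi imak/p)(-1)^{aks}$, reducing everything to the parity statement $\ell(1+a+k)+aks\equiv0\pmod2$. Since $ps=3\ell^2+2m\equiv\ell\pmod2$ forces $s\equiv\ell\pmod2$, this simplifies to $\ell(1+a)(1+k)\equiv0\pmod2$; and the determinant relation $ad-pk=1$, read modulo $2$ as $ad+k\equiv1$, rules out the only offending case ($\ell$ odd, $a$ even, $k$ even) and closes the proof.
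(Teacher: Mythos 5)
Your proposal is correct and takes essentially the same approach as the paper: collapse $U_{p,m}$ via Proposition \propo{F2transU}, use the matrix identity $SPA=BSP$ with $B=\begin{pmatrix}d&-1\\-pk&a\end{pmatrix}\in\Gamma_0(p)$ together with Theorem \thm{mainthm}, replace $\overline{a\ell}$ by $\ell'$ via evenness of $\mathcal{F}_2$ in the index, and finish with Lemma \lem{sineq} and the same sign-and-phase verification. Your final parity bookkeeping (writing $3\ell^2+2m=ps$ and reading $\det A$ modulo $2$) is merely a reorganization of the paper's check of the congruence $p\ell(k+a+1)-3ka\ell^2\equiv 2mak\pmod{2p}$ modulo $p$ and modulo $2$ separately.
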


\begin{proof}
We let $P = \begin{pmatrix} p & 0 \\ 0 & 1\end{pmatrix},$ and find that $S\, P\, A = B \, S\, P,$
where
$$
B = \begin{pmatrix} d & -1 \\ -pk & a\end{pmatrix}
\in \Gamma_0(p).
$$
From \eqn{F1SF2} we have
$\Fell{2}{\ell}{p}{z}  = i\,\Fell{1}{\ell}{p}{z} \strokeb{S}{1}$.
Then using this and Theorem \thm{mainthm} we have
\begin{align*}
\Fell{2}{\ell}{p}{p^2z} \strokeb{U_{p,m}}{1} \strokeb{A}{1} &=\Fell{2}{\ell}{p}{pz} \strokeb{A}{1}
\\
&=i\,\Fell{1}{\ell}{p}{z} \strokeb{S}{1} \strokeb{P}{1} \strokeb{A}{1}
\\
&=i\,\Fell{1}{\ell}{p}{z} \strokeb{B}{1} \strokeb{S}{1} \strokeb{P}{1}
\\
&=i\,\mu(B,\ell)\, \Fell{1}{\overline{a\ell}}{p}{z} \strokeb{S}{1} \strokeb{P}{1}
\\
&=\mu(B,\ell)\, \Fell{2}{\overline{a\ell}}{p}{pz},
\end{align*}
where
$\mu(B,\ell) = \exp( -\tfrac{3 \pi i}{p} a k \ell^2) (-1)^{k\ell + \FL{a\ell/p}}.$
It can be shown that
$$
\Fell{2}{\ell}{p}{z} 
=
\Fell{2}{p-\ell}{p}{z},
$$
and
$$
(-1)^{\ell} \sin\Parans{\frac{6\ell\pi}{p}}
=
(-1)^{p-\ell} \sin\Parans{\frac{6(p-\ell)\pi}{p}}.
$$
Thus
\begin{align*}
(-1)^{\ell} \sin\Parans{\frac{6\ell\pi}{p}}\Fell{2}{\ell}{p}{p^2z} \strokeb{U_{p,m}}{1} \strokeb{A}{1}
&=(-1)^{\ell} \sin\Parans{\frac{6\ell\pi}{p}}\mu(B,\ell)\, \Fell{2}{\overline{a\ell}}{p}{pz}
\\
&=(-1)^{\ell} \sin\Parans{\frac{6\ell\pi}{p}}\mu(B,\ell)\, \Fell{2}{\ell'}{p}{pz},
\end{align*}
since $\ell' \equiv \pm a\ell \pmod{p}$.\\\\
It remains to show that
\beq
(-1)^{\ell} \sin\Parans{\frac{6\ell\pi}{p}}
	 \mu(B,\ell)
=
(-1)^{\ell'} \sin\Parans{\frac{6\ell'd\pi}{p}}
\zeta_p^{mak}.
\label{eq:muzetaid}
\eeq
\noindent\\
In view of Lemma \lem{sineq} this is equivalent to showing
\beq
(-1)^{\ell  + a\ell + \FL{a\ell/p}} 
	 \mu(B,\ell)
=
\zeta_p^{mak}.
\label{eq:muzetaidB}
\eeq
Substituting for $\mu(B,\ell)$, this is equivalent to

\begin{align*}
&(-1)^{\ell(k + a +1)}
 \exp( -\tfrac{3 \pi i}{p} a k \ell^2) = \zeta_p^{mak},
 \\
\text{or}\hspace{5mm}&p \ell(k + a + 1) - 3ka\ell^2 \equiv 2 m a k \pmod{2p}.
\end{align*}
\noindent
Since $-3\ell^2 \equiv 2m \pmod{p}$ we see that this congruence
holds mod $p$. We also see that it holds mod $2$ trivially when $\ell$ is
even, and holds when $\ell$ is odd, since $(a,k)=1$ so
that
$$
k + a + 1 + ka \equiv (k+1)(a + 1) \equiv 0 \pmod{2},
$$
since either $k$ or $a$ is odd.\\
\end{proof}

We finally combine the above results to give the proof of \eqn{eqnKpmid} using the above results.
\\\\
We consider two cases.
\\\\
CASE 1.  $m=0$ or $\leg{-24m}{p}=-1$.  In this case
$$
(6 \ell)^2 \not\equiv -24m \pmod{p},\quad\mbox{and}\quad
(6 \ell')^2 \not\equiv -24ma^2 \pmod{p},                                 
$$ 
for $1 \le \ell, \ell' \le \tfrac{1}{2}(p-1)$. The result then
follows from Proposition \propo{F1transUA} and Proposition \propo{F2transU}. 
\\\\
CASE 2.  $\leg{-24m}{p}=1$.  In this case choose 
$1 \le \ell, \ell' \le \tfrac{1}{2}(p-1)$ such that          
$$
(6 \ell)^2 \equiv -24m \pmod{p},\quad\mbox{and}\quad
(6 \ell')^2 \equiv -24ma^2 \pmod{p}.                                 
$$
We have
\begin{align*}
&\stroke{\JSdpz{z}{}}{U_{p,m}}{1} \strokeb{A}{1}\\
&= \left(\frac{\eta(p^2 z)}{\eta(z)} \, \Fell{1}{1}{p}{z}
 - 
2\,\chi_{12}(p) \,
\sum_{j=1}^{\frac{1}{2}(p-1)} (-1)^{j} 
\sin\Lpar{\frac{6j\pi}{p}} \,
\Fell{2}{j}{p}{p^2 z} \right)
\strokeb{U_{p,m}}{1} \strokeb{A}{1} \\
&=
(-1)^{d+1} \zeta_p^{mak} 
\frac{\eta(p^2 z)}{\eta(z)} \, \Fell{1}{d}{p}{z}
\strokeb{U_{p,\overline{ma^2}}}{1} \\
& \qquad \qquad  - 
2\,\chi_{12}(p) \,
(-1)^{\ell} 
\sin\Lpar{\frac{6\ell d\pi}{p}} \,
\Fell{2}{\ell}{p}{p^2 z} 
\strokeb{U_{p,m}}{1} \strokeb{A}{1} 
\mbox{(by Propositions \propo{F1transUA} and \propo{F2transU})}
\\
&=
(-1)^{d+1} \zeta_p^{mak} 
\frac{\eta(p^2 z)}{\eta(z)} \, \Fell{1}{d}{p}{z}
\strokeb{U_{p,\overline{ma^2}}}{1} \\
& \qquad \qquad  - 
2\,\chi_{12}(p) \,
(-1)^{\ell'} 
\sin\Lpar{\frac{6\ell' d\pi}{p}} \,
\zeta_p^{mak} \,
\Fell{2}{\ell'}{p}{p z} 
~~\qquad\qquad\mbox{(by Proposition \propo{F2transUA})}
\\
&= (-1)^{d+1} \zeta_p^{mak}
\Bigg(\frac{\eta(p^2 z)}{\eta(z)} \, \Fell{1}{d}{p}{z}
\\ & \qquad \qquad- 
2\,\chi_{12}(p) \,
\sum_{j=1}^{\frac{1}{2}(p-1)} (-1)^{j + d + 1} 
\sin\Lpar{\frac{6jd\pi}{p}} \,
\Fell{2}{j}{p}{p^2 z} \Bigg)
\strokeb{U_{p,ma^2}}{1} 
\mbox{(by Proposition \propo{F1transUA})}\\
&= 
(-1)^{d+1}\,
\zeta_p^{mak} 
\stroke{\JSdpz{d}{z}}{U_{p,ma^2}}{1}.
\end{align*}
This completes the proof.\\
\qed

\begin{theorem}\label{thm:Kpmtransid}
Let $p>3$ be prime, $0\leq m\leq p-1$. Suppose there is a set $\mathcal{B}$ of $\overrightarrow{n}-$vectors such that $\{j(p,\overrightarrow{n},z):\overrightarrow{n} \in \mathcal{B}\}$ is linearly independent (over $\mathbb{Q}$) and

$$\mathcal{K}_{p,m}(\zeta_p,z)=\sum\limits_{\overrightarrow{n} \in \mathcal{B}}c(\overrightarrow{n},\zeta_p)\, j(p,\overrightarrow{n},z),$$ where the $c(\overrightarrow{n},\zeta_p) \in \mathbb{Q}[\zeta_p]$. Then for $1\leq a\leq \frac{1}{2}(p-1)$, we have 
\\
$$\mathcal{K}_{p,\overline{ma^2}}(\zeta_p,z)=\sum\limits_{\overrightarrow{n} \in \mathcal{B}}(-1)^{L(\overrightarrow{n},a,b,p)+d\,(a+1)}\,\frac{\sin(\pi/p)}{\sin(a\pi/p)}\,c(\overrightarrow{n},\zeta_p^a)\, j(p,\pi_a(\overrightarrow{n}),z),$$
where $1 \leq d \leq p-1, ad \equiv 1 \pmod{p}$, and $L(\overrightarrow{n},a,b,p)$ is defined in Equation \eqn{Lexp}.
\end{theorem}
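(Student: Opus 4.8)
\noindent
The plan is to apply the weight-one stroke operator attached to $A=\begin{pmatrix} a & b \\ p & d\end{pmatrix}\in\Gamma_0(p)$, with $1\le a\le\tfrac12(p-1)$, $1\le d\le p-1$ and $ad-bp=1$ (so that $ad\equiv1\pmod p$ and $b=(ad-1)/p$), to the hypothesised expansion and then to remove the resulting twist by a Galois substitution. First I would record that every $\overrightarrow{n}\in\mathcal{B}$ lies in $\mathfrak{F}(m,p)$: since $\mathcal{K}_{p,m}(\zeta_p,z)$ transforms under $\Gamma_1(p)$ with character $\exp(2\pi i\,bm/p)$ (Theorem~\thm{KpmthmG}) and the $j(p,\overrightarrow{n},z)$ are linearly independent, each summand must carry the same character, which is what produces a single common phase below.

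Applying the operator $\strokeb{A}{1}$ to $\mathcal{K}_{p,m}(\zeta_p,z)=\sum_{\overrightarrow{n}\in\mathcal{B}}c(\overrightarrow{n},\zeta_p)\,j(p,\overrightarrow{n},z)$, I would evaluate the left side by the master transformation Theorem~\thm{newKpmtrans} and each term on the right by Theorem~\thm{jtrans}, recognising the image $\eta(pz)^{n_0}\prod_k f_{p,ka}(z)^{n_k}$ as $j(p,\pi_a(\overrightarrow{n}),z)$ via Lemma~\lem{jperm}. Both sides then carry the common factor $\exp(2\pi i\,abm/p)$, which cancels. What is left is an identity expressing $\mathcal{K}_{p,\overline{ma^2}}(\zeta_p^d,z)$ — note that here the root of unity is $\zeta_p^d$ and not $\zeta_p$ — in the form $(-1)^{d+1}\,\big(\sin(d\pi/p)/\sin(\pi/p)\big)\sum_{\overrightarrow{n}}(-1)^{L(\overrightarrow{n},a,b,p)}c(\overrightarrow{n},\zeta_p)\,j(p,\pi_a(\overrightarrow{n}),z)$, with $L$ as in \eqn{Lexp} and $\mathcal{K}_{p,ma^2}=\mathcal{K}_{p,\overline{ma^2}}$ by the indexing convention.

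The decisive step is to pass from $\zeta_p^d$ back to $\zeta_p$. I would apply the automorphism $\sigma_a\in\mathrm{Gal}(\mathbb{Q}(\zeta_p)/\mathbb{Q})$ with $\sigma_a(\zeta_p)=\zeta_p^a$, acting coefficientwise on the $q$-expansions, the fractional $q$-powers and the rational-coefficient eta-quotients being untouched. The lemma to be proved is that $\sigma_a\big(\mathcal{K}_{p,m'}(\zeta_p^d,z)\big)=\mathcal{K}_{p,m'}(\zeta_p,z)$ whenever $ad\equiv1\pmod p$. Since $\sigma_a(\zeta_p^{kd})=\zeta_p^{k}$, this is immediate from Definition~\refdef{Kpm} when $m'=0$ or $\leg{-24m'}{p}=-1$; in the case $\leg{-24m'}{p}=1$ one must also verify that $\sigma_a$ preserves the sign of the $\Phi_{p,a''}$-term, for which I would use the extension $\sigma_a(\zeta_{2p})=(-1)^{a+1}\zeta_{2p}^a$ (valid because $p$ is odd), giving $\sigma_a\big(i\sin(t\pi/p)\big)=(-1)^{(a+1)t}i\sin(at\pi/p)$, and then reduce the parity using $ad=1+bp$. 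Applying $\sigma_a$ to the identity above fixes $(-1)^{L(\overrightarrow{n},a,b,p)}$ and $j(p,\pi_a(\overrightarrow{n}),z)$, sends $c(\overrightarrow{n},\zeta_p)$ to $c(\overrightarrow{n},\zeta_p^a)$, and converts $\sin(d\pi/p)/\sin(\pi/p)$ into $\pm\,\sin(\pi/p)/\sin(a\pi/p)$; collecting the signs gives the asserted coefficient.

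The modular input is thus used almost verbatim, and the main obstacle is the sign bookkeeping in these last two steps. One must keep apart the rational sign $(-1)^{L(\overrightarrow{n},a,b,p)}$ from the eta-quotient, the factor $(-1)^{d+1}$ from Theorem~\thm{newKpmtrans}, and the extra sign $(-1)^{a+1}$ arising because $\sigma_a$ does not send $\zeta_{2p}$ simply to $\zeta_{2p}^a$; the parity identity $ad=1+bp$ with $p$ odd is what one uses to reduce their product to the exponent in the statement. Verifying that the $\Phi$-term sign in the quadratic-residue branch of Definition~\refdef{Kpm} survives $\sigma_a$, so that the Galois lemma holds uniformly in both branches, is the point I expect to require the most care.
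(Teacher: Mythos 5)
Your proposal is, in outline, the paper's own proof: invert Theorem \thm{Kpmthm} to express $\mathcal{K}_{p,\overline{ma^2}}(\zeta_p^d,z)$ through $\stroke{\mathcal{K}_{p,m}(\zeta_p,z)}{A}{1}$, expand the right-hand side termwise by Theorem \thm{jtrans} and Lemma \lem{jperm} (the phases $\exp(\pm 2\pi i abm/p)$ cancel), and then remove the twist $\zeta_p^d$ by the substitution $\zeta_p\mapsto\zeta_p^a$. Your two points of extra care are exactly where the paper is silent or purely formal: the paper applies Theorem \thm{jtrans} without justifying that each $\overrightarrow{n}\in\mathcal{B}$ lies in $\mathfrak{F}(m,p)$, and it performs the last step by literally replacing $\zeta_p^{1/2}$ with $\zeta_p^{a/2}$ in $\bigl(\zeta_p^{d/2}-\zeta_p^{-d/2}\bigr)/\bigl(\zeta_p^{1/2}-\zeta_p^{-1/2}\bigr)$, obtaining $(-1)^b\sin(\pi/p)/\sin(a\pi/p)$ — whereas you insist on the genuine automorphism $\sigma_a$ of $\mathbb{Q}(\zeta_p)$, for which $\sigma_a(\zeta_{2p})=(-1)^{a+1}\zeta_{2p}^a$. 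Note that $\zeta_{2p}\mapsto\zeta_{2p}^a$ is not a field homomorphism when $a$ is even, so your version of this step is the rigorous one.

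The gap is your final assertion that collecting the signs ``gives the asserted coefficient'': it does not, and no bookkeeping can make it do so, because the printed sign is itself wrong in one case. Carrying out your own prescription, $\sigma_a\bigl(\sin(d\pi/p)/\sin(\pi/p)\bigr)=(-1)^{(a+1)(d+1)+b}\,\sin(\pi/p)/\sin(a\pi/p)$ (numerator contributes $(-1)^{(a+1)d}$, denominator $(-1)^{a+1}$, and $\sin(ad\pi/p)=(-1)^b\sin(\pi/p)$); combined with $(-1)^{d+1}$, $(-1)^{L}$ and the parity $ad=1+pb$, the exponent reduces to $L(\overrightarrow{n},a,b,p)+a+1$, while the theorem asserts $L(\overrightarrow{n},a,b,p)+d(a+1)$. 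These differ by $(-1)^{(a+1)(d+1)}$, i.e.\ precisely when $a$ and $d$ are both even (e.g.\ $p=7$, $a=2$, $d=4$; or $p=11$, $a=2$, $d=6$). The paper's own data show your sign is the correct one: for $p=7$, $m=1$, $a=2$, $d=4$, $b=1$, $\overrightarrow{n}=(3,1,-1,-1)$, $c(\overrightarrow{n},\zeta_7)=-(1+\zeta_7^3+\zeta_7^4)$, equation \eqn{Lexp} gives $L=1\cdot3\cdot(1-2-3)+0=-12$, so the printed exponent is even and predicts $\mathcal{K}_{7,4}(\zeta_7,z)=-(\zeta_7+\zeta_7^6)\,\eta(7z)^2\eta_{7,2}(z)/\bigl(\eta_{7,1}(z)\eta_{7,3}(z)\bigr)$, contradicting the identity $\mathcal{K}_{7,4}(\zeta_7,z)=+(\zeta_7+\zeta_7^6)\,\eta(7z)^2\eta_{7,2}(z)/\bigl(\eta_{7,1}(z)\eta_{7,3}(z)\bigr)$ listed in Section 1, whereas $(-1)^{L+a+1}=-1$ gives the listed value. (The worked example following the theorem appears to confirm the printed sign only because it takes $L([3,1,-1,-1],2,1,7)=-7$; formula \eqn{Lexp} gives $-12$.) Your Galois lemma for Definition \refdef{Kpm} — that $\sigma_a\bigl(\mathcal{K}_{p,m'}(\zeta_p^d,z)\bigr)=\mathcal{K}_{p,m'}(\zeta_p,z)$ with no extra sign, in both branches — does hold, exactly as you anticipated. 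So your method is sound, and in fact exposes the flaw in the paper's formal substitution; but faithfully executed it proves the theorem with exponent $L+a+1$ in place of $L+d(a+1)$, and your claim to recover the stated coefficient is the step that fails.
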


\begin{proof}
From Theorem \thm{Kpmthm}, we have
\beq
\mathcal{K}_{p,\overline{ma^2}}(\zeta_p^d,z)
= \frac{\sin(d\pi/p)}{\sin(\pi/p)}\,(-1)^{d+1}\,
\exp\Lpar{\frac{-2\pi imab}{p}}\,\stroke{\mathcal{K}_{p,m}(\zeta_p, z)}{A}{1},
\nonumber
\eeq
where $b$ is such that $\begin{pmatrix}
      a & b \\ p & d 
    \end{pmatrix}
\in \Gamma_0(p)$ or $ad \equiv 1 \pmod{p}$.
Then using the previous theorem, we get
\begin{align*}
\mathcal{K}_{p,\overline{ma^2}}(\zeta_p^d,z)
&= \sum\limits_{\overrightarrow{n} \in \mathcal{B}} \frac{\sin(d\pi/p)}{\sin(\pi/p)}\,(-1)^{d+1+L(\overrightarrow{n},a,b,p)}\,c(\overrightarrow{n},\zeta_p)\, j(p,\pi_a(\overrightarrow{n}),z)
\\
&= \sum\limits_{\overrightarrow{n} \in \mathcal{B}} \frac{\zeta_p^{\frac{d}{2}}-\zeta_p^{\frac{-d}{2}}}{\zeta_p^{\frac{1}{2}}-\zeta_p^{\frac{-1}{2}}}\,(-1)^{d+1+L(\overrightarrow{n},a,b,p)}\,c(\overrightarrow{n},\zeta_p)\, j(p,\pi_a(\overrightarrow{n}),z)
\end{align*}
Then, replacing $\zeta_p$ by $\zeta_p^a$, and using the fact that $ad=1+pb$, we get
\\
\begin{align*}
\mathcal{K}_{p,\overline{ma^2}}(\zeta_p,z)&=\sum\limits_{\overrightarrow{n} \in \mathcal{B}}\,(-1)^b\,\frac{\sin(\pi/p)}{\sin(a\pi/p)}\,(-1)^{L(\overrightarrow{n},a,b,p)+d+1}\,c(\overrightarrow{n},\zeta_p^a)\, j(p,\pi_a(\overrightarrow{n}),z)
\\
&=\sum\limits_{\overrightarrow{n} \in \mathcal{B}}\,(-1)^{pb}\,\frac{\sin(\pi/p)}{\sin(a\pi/p)}\,(-1)^{L(\overrightarrow{n},a,b,p)+d+1}\,c(\overrightarrow{n},\zeta_p^a)\, j(p,\pi_a(\overrightarrow{n}),z)
\\
&=\sum\limits_{\overrightarrow{n} \in \mathcal{B}}\,(-1)^{L(\overrightarrow{n},a,b,p)+d+ad}\,\frac{\sin(\pi/p)}{\sin(a\pi/p)}\,c(\overrightarrow{n},\zeta_p^a)\, j(p,\pi_a(\overrightarrow{n}),z),
\end{align*}
which proves the theorem.\\
\end{proof}

\begin{cor}\label{cor:resnonres}
Let $p>3$ be prime. The identities for $\mathcal{K}_{p,m}(\zeta_{p},z)$ in terms of generalized eta functions are completely determined by three particular ones namely $$\mathcal{K}_{p,0}(\zeta_{p},z),\, \mathcal{K}_{p,m^+}(\zeta_{p},z), \, \mathcal{K}_{p,m^-}(\zeta_{p},z),$$ where $\left(\frac{-24m^+}{p}\right)=1,\, \left(\frac{-24m^-}{p}\right)=-1$.\\
\end{cor}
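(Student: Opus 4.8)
The plan is to reduce the corollary to a purely arithmetic statement about orbits and then quote Theorem~\thm{Kpmtransid}. That theorem shows that a generalized eta-quotient identity for $\mathcal{K}_{p,m}(\zeta_p,z)$ forces one for $\mathcal{K}_{p,\overline{ma^2}}(\zeta_p,z)$, for every $1\le a\le\tfrac12(p-1)$, with the eta-quotients merely permuted by $\pi_a$ and the coefficients obtained by $\zeta_p\mapsto\zeta_p^a$. So it suffices to show that, as $a$ ranges over $1\le a\le\tfrac12(p-1)$, the map $m\mapsto\overline{ma^2}$ has exactly three orbits on $\{0,1,\dots,p-1\}$, and that $0$, $m^+$, $m^-$ represent them.

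First I would note that $\{a^2\bmod p:1\le a\le\tfrac12(p-1)\}$ is a complete set of nonzero quadratic residues, since $a^2\equiv(p-a)^2\pmod p$ and the half-range selects one element from each pair $\{a,p-a\}$. Writing $H$ for the index-two subgroup of squares in $(\mathbb{Z}/p\mathbb{Z})^{*}$, the set of targets reachable from a fixed $m\neq0$ is therefore the full coset $mH=\{\overline{ma^2}:1\le a\le\tfrac12(p-1)\}$ (note that $a=1$ returns $m$, so $m\in mH$), while $0$ maps only to $0$. Hence the orbits are $\{0\}$ together with the two cosets of $H$. Because $p\nmid24$, the residue $-24$ is a unit, so by multiplicativity $\leg{-24\,\overline{ma^2}}{p}=\leg{-24m}{p}\leg{a^2}{p}=\leg{-24m}{p}$; thus the character $\leg{-24m}{p}$ is an orbit invariant, and the two cosets are precisely $\{m:\leg{-24m}{p}=1\}$ and $\{m:\leg{-24m}{p}=-1\}$, both nonempty since $p>3$. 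These are the orbits of $m^+$ and $m^-$.

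Finally I would apply Theorem~\thm{Kpmtransid} on each orbit: the identity for $\mathcal{K}_{p,m^+}(\zeta_p,z)$, together with the choices $a=1,\dots,\tfrac12(p-1)$, yields explicit eta-quotient identities for every $\mathcal{K}_{p,m}(\zeta_p,z)$ with $\leg{-24m}{p}=1$; likewise $m^-$ covers the non-residue orbit, and $m=0$ gives $\mathcal{K}_{p,0}(\zeta_p,z)$ itself. Since the three orbits exhaust $\{0,1,\dots,p-1\}$, every identity is determined, as claimed. The argument is essentially bookkeeping once Theorem~\thm{Kpmtransid} is in hand; the only step requiring any care is the orbit computation, namely the elementary facts that the half-range squares realize all nonzero quadratic residues and that the Legendre symbol is multiplicative, which together pin down the orbits as $\{0\}$ and the two quadratic cosets.
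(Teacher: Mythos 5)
Your proposal is correct and follows essentially the same route as the paper: the corollary is stated there as an immediate consequence of Theorem~\thm{Kpmtransid}, and your argument simply supplies the implicit bookkeeping — that the half-range squares $\{a^2 \bmod p : 1\le a\le\tfrac12(p-1)\}$ form the full group of nonzero quadratic residues, so the orbits of $m\mapsto\overline{ma^2}$ are $\{0\}$ and the two cosets of squares, which are distinguished by the invariant $\leg{-24m}{p}$. Nothing is missing; this is exactly the reduction the authors intend when they remark in Section~7 that only the three cases $\leg{-24m}{p}=0,\,1,\,-1$ need be treated.
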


We present an example illustrating the theorem when $p=7$. Equation \eqn{Ramid7V2} gives the $7-$dissection of $R(\zeta_7,q)$. We see that $$\mathcal{K}_{7,1}(\zeta_7, z)=- (1+\zeta_7^3 + \zeta_7^4) \, j(7,[3,1,-1,-1],z).$$
\\
Let $\overrightarrow{n}=[3,1,-1,-1], \, p=7, a=2, b=1, d=4, \, c\,(\overrightarrow{n},\zeta_p)=- (1+\zeta_7^3 + \zeta_7^4)$.
\\\\
Now, 
\begin{align*}
L(\overrightarrow{n},a,b,p)&=L([3,1,-1,-1],2,1,7)=-7,
\\
\frac{\sin(\pi/p)}{\sin(a\pi/p)}&=\frac{\sin(\pi/7)}{\sin(2\pi/7)}=1+\zeta_7^2 + \zeta_7^5,
\\
c\,(\overrightarrow{n},\zeta_p)&=c\,([3,1,-1,-1],\zeta_7)=- (1+\zeta_7^3 + \zeta_7^4).
\end{align*}
and 
\begin{align*}
\mathcal{K}_{7,4}(\zeta_7,z)&=\sum\limits_{\overrightarrow{n} \in \mathcal{B}}(-1)^{L(\overrightarrow{n},a,b,p)+a+d}\,\frac{\sin(\pi/p)}{\sin(a\pi/p)}\,c(\overrightarrow{n},\zeta_p^a)\, j(p,\pi_a(\overrightarrow{n}),z)
\\
&=(1+\zeta_7^2 + \zeta_7^5) \,(1+\zeta_7 + \zeta_7^6) \, j(7,[3,-1,1,-1],z)
\\
&=(\zeta_7 + \zeta_7^6) \, j(7,[3,-1,1,-1],z),
\end{align*}
which verifies that 
\begin{align*}
\mathcal{K}_{7,4}(\zeta_7,z)=(\zeta_7 + \zeta_7^6) \, \frac{\eta(7z)^2 \, \eta_{7,2}(z)}{\eta_{7,1}(z)\, \eta_{7,3}(z)}.\\\\
\end{align*}

\section{Symmetry of $\mathcal{K}_{p,0}(\zeta_p,z)$ coefficients}

\begin{theorem}
\label{thm:coeffsymmKp0}
Let $p>3$ be prime. Suppose there are $t$ vectors $\overrightarrow{n_1},\overrightarrow{n_2},\cdots,\overrightarrow{n_t}\in \mathbb{Z}^{\frac{1}{2}(p+1)}$ such that the set of functions $j(p,\pi_r(\overrightarrow{n_{k}}),z), 1\leq {k} \leq t, 1\leq r \leq \frac{1}{2}(p-1)$ are linearly independent (over $\mathbb{Q}$) and $$\mathcal{K}_{p,0}(\zeta_p,z)=\sum_{{k}=1}^t\sum_{r=1}^{\frac{1}{2}(p-1)}c_{p,r,{k}}(\zeta_p)j(p,\pi_r(\overrightarrow{n_{k}}),z),$$ where $c_{p,r,{k}}(\zeta_p) \in \mathbb{Q}[\zeta_p]$. Then for $1\leq {d} \leq \frac{1}{2}(p-1)$, and $\overrightarrow{n}=(n_0,n_1,n_2,\cdots ,n_{\frac{1}{2}(p-1)})$, we have $$c_{p,d,k}(\zeta_p)=\frac{\sin(\pi/p)}{\sin(d\pi/p)}\,(-1)^{d+1+L(\overrightarrow{n},d)}c_{p,1,k}(\zeta_p^d)$$ where $$L(\overrightarrow{n},d)=L(\overrightarrow{n},a,b,d,p)=bd(1+a)\sum\limits_{k=1}^{\frac{1}{2}(p-1)}kn_{k}+\sum\limits_{k=1}^{\frac{1}{2}(p-1)}\Big(\FL{\frac{dka}{p}}+\FL{\frac{dk}{p}}\Big) n_{k},$$ and $a,b$ are chosen so that $A = \begin{pmatrix}
      a & b \\ p & d 
    \end{pmatrix}
\in \Gamma_0(p)$.
\\
\end{theorem}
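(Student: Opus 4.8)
The plan is to apply the weight-one stroke operator $\strokeb{A}{1}$ attached to the matrix $A=\begin{pmatrix} a & b \\ p & d\end{pmatrix}\in\Gamma_0(p)$ of the hypothesis (so $1\le d\le\frac12(p-1)$ and $ad-bp=1$, whence $a\equiv d^{-1}\pmod p$) to both sides of the assumed expansion of $\mathcal{K}_{p,0}(\zeta_p,z)$, and then to match coefficients. For the left-hand side I would invoke Theorem \thm{newKpmtrans} in the special case $m=0$: there $ma^2=0$ and the factor $\zeta_p^{mak}$ is trivial, so
$$
\stroke{\mathcal{K}_{p,0}(\zeta_p,z)}{A}{1}=\frac{\sin(\pi/p)}{\sin(d\pi/p)}\,(-1)^{d+1}\,\mathcal{K}_{p,0}(\zeta_p^d,z).
$$
Each $j(p,\pi_r(\overrightarrow{n_k}),z)$ is a generalized eta-quotient with rational $q$-expansion, and the numbers $N(k,p,n)$ are rational, so the Galois automorphism $\zeta_p\mapsto\zeta_p^d$ fixes every $j$ while sending $\mathcal{K}_{p,0}(\zeta_p,z)$ to $\mathcal{K}_{p,0}(\zeta_p^d,z)$ (directly from Definition \refdef{Kpm}(i)); applying it to the expansion gives $\mathcal{K}_{p,0}(\zeta_p^d,z)=\sum_{k,r}c_{p,r,k}(\zeta_p^d)\,j(p,\pi_r(\overrightarrow{n_k}),z)$.

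For the right-hand side I would transform each building block with Theorem \thm{jtrans} and Lemma \lem{jperm}. Since $\mathcal{K}_{p,0}(\zeta_p,z)\in\mathfrak{F}(0,p)$ and the blocks are linearly independent with matching multiplier, each $\overrightarrow{n_k}\in\mathfrak{F}(0,p)$, and by \eqn{jperm} so is every $\pi_r(\overrightarrow{n_k})$; hence the $m=0$ form of \eqn{jtrans} applies with trivial exponential factor and, using Lemma \lem{jperm} together with the composition law $\pi_a\circ\pi_r=\pi_{\overline{ar}}$ from Definition \refdef{perm}, yields
$$
j(p,\pi_r(\overrightarrow{n_k}),z)\strokeb{A}{1}=(-1)^{L(\pi_r(\overrightarrow{n_k}),a,b,p)}\,j(p,\pi_{\overline{ar}}(\overrightarrow{n_k}),z).
$$
The blocks are independent over $\mathbb{Q}$, hence over $\mathbb{Q}(\zeta_p)$ (rational independence of functions with rational coefficients survives field extension), so I may equate the coefficients of the identity-permutation block $j(p,\overrightarrow{n_k},z)$. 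On the left this coefficient is $\frac{\sin(\pi/p)}{\sin(d\pi/p)}(-1)^{d+1}c_{p,1,k}(\zeta_p^d)$; on the right only the index $r$ with $\overline{ar}=1$, i.e. $r\equiv\pm a^{-1}\equiv\pm d\pmod p$, contributes, and since $1\le d\le\frac12(p-1)$ this forces $r=d$, giving $c_{p,d,k}(\zeta_p)(-1)^{L(\pi_d(\overrightarrow{n_k}),a,b,p)}$. Solving produces
$$
c_{p,d,k}(\zeta_p)=\frac{\sin(\pi/p)}{\sin(d\pi/p)}\,(-1)^{d+1+L(\pi_d(\overrightarrow{n_k}),a,b,p)}\,c_{p,1,k}(\zeta_p^d).
$$

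The remaining and genuinely delicate step, which I expect to be the main obstacle, is the sign identity $L(\pi_d(\overrightarrow{n_k}),a,b,p)\equiv L(\overrightarrow{n},d)\pmod 2$. I would start from \eqn{Lexp} and substitute $i=\pi_d(j)$, so $j=\pi_d^{-1}(i)=\pi_a(i)$, rewriting the exponent as $b(a+1)\sum_i\pi_a(i)\,n_i+\sum_i\FL{\pi_a(i)a/p}\,n_i$. The crucial arithmetic input is $\det A=1$: from $da=1+bp$ one gets $\FL{dka/p}=kb$ for $1\le k\le\frac12(p-1)$, which simultaneously collapses the claimed $L(\overrightarrow{n},d)$ to $\sum_k\bigl(bk(d+1)+\FL{dk/p}\bigr)n_k\pmod 2$. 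Writing $\rho_i=di\bmod p$ and splitting on $\rho_i<p/2$ versus $\rho_i>p/2$, one computes $\pi_a(i)$ and $\FL{\pi_a(i)a/p}$ explicitly in terms of $\FL{di/p}$; the two cases are seen to differ, modulo $2$, only by the factor $(a+1)(b+1)$. The argument then closes by noting that $\det A=ad-bp=1$ with $p$ odd precludes $a$ and $b$ from both being even, so $(a+1)(b+1)\equiv0\pmod 2$, the case distinction washes out, and the two exponents coincide modulo $2$. Everything outside this floor-function bookkeeping is a routine assembly of Theorems \thm{newKpmtrans} and \thm{jtrans}.
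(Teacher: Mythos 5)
Your proposal is correct, and its skeleton is the paper's: apply $\strokeb{A}{1}$ to both sides, use Theorem \thm{newKpmtrans} at $m=0$ (where the twist $\zeta_p^{mak}$ disappears) on the left, transform each block on the right, and compare the coefficient of the identity-permutation block via linear independence over $\mathbb{Q}(\zeta_p)$. (You make explicit the Galois step $\zeta_p\mapsto\zeta_p^d$ giving $\mathcal{K}_{p,0}(\zeta_p^d,z)=\sum_{k,r}c_{p,r,k}(\zeta_p^d)\,j(p,\pi_r(\overrightarrow{n_k}),z)$; the paper uses this implicitly when it "compares coefficients with $\mathcal{K}_{p,0}(\zeta_p^d,z)$".) Where you genuinely diverge is the sign computation. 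The paper does not invoke Theorem \thm{jtrans} as a black box: it redoes the multiplier calculation "as in the proof of Theorem \thm{modular}" directly on $j(p,\pi_r(\overrightarrow{n_\ell}),z)=\eta(pz)^{n_0}\prod_k f_{p,rk}(z)^{n_k}$, applying Biagioli's transformation to the factors with \emph{unreduced} indices $rk$, which produces the exponent $L_{r,\ell}(A)=br(1+a)\sum_k kn_k+\sum_k\bigl(\FL{rka/p}+\FL{rk/p}\bigr)n_k$ — i.e.\ exactly the theorem's $L(\overrightarrow{n},d)$ at $r=d$ — so no reconciliation is ever needed. You instead cite \eqn{jtrans} for the permuted vector, getting the exponent $L(\pi_r(\overrightarrow{n_k}),a,b,p)$, and therefore need the extra lemma $L(\pi_d(\overrightarrow{n}),a,b,p)\equiv L(\overrightarrow{n},d)\pmod{2}$, which appears nowhere in the paper. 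Your sketch of that lemma is right, and I checked it closes: $da=1+bp$ gives $\FL{dka/p}=kb$, collapsing $L(\overrightarrow{n},d)$ to $\sum_k\bigl(bk(d+1)+\FL{dk/p}\bigr)n_k\pmod{2}$; substituting $j=\pi_a(i)$ and splitting on whether $di\bmod p$ lies below or above $p/2$, both cases reduce to this same expression modulo terms carrying the factor $(a+1)(b+1)$, which is even since $ad-bp=1$ with $p$ odd forces $a$ or $b$ odd (the same parity trick the paper uses in Theorem \thm{modular} and Proposition \propo{F2transUA}). The trade-off: your route reuses already-proved transformation results at the cost of a fiddly but elementary floor-sum congruence; the paper's route avoids that congruence entirely but repeats the Biagioli multiplier bookkeeping with unreduced indices.
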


\begin{proof}
By Theorem \thm{newKpmtrans} (iii), for $m=0$, we have
\\
$$\mathcal{K}_{p,0}(\zeta_p,z)\strokeb{A}{1}=\frac{\sin(\pi/p)}{\sin(d\pi/p)}\,(-1)^{d+1}\mathcal{K}_{p,0}(\zeta_p^d,z)$$ where 
$$A = \begin{pmatrix}
      a & b \\ p & d 
    \end{pmatrix}
\in \Gamma_0(p), 1 \leq a,d \leq p-1.
$$ 
\\
Now as in the proof of Theorem \thm{modular}, we deduce that for an arbitrary $\overrightarrow{n_{\ell}}\in \{\overrightarrow{n_1},\overrightarrow{n_2},\cdots,\overrightarrow{n_t}\}$, say $\overrightarrow{n_{\ell}}=(n_0,n_1,n_2,\cdots ,n_{\frac{1}{2}(p-1)})$, we have 
\\
$j(p,\pi_r(\overrightarrow{n_{\ell}}),z)\stroke{}{A}{1}=(-1)^{L_1(A)+\dfrac{a b}{p}L_2(A)}\,\nu_{\eta}^{L_3(A)}\Lpar{{}^pA}j(p,\pi_{ra}(\overrightarrow{n_{\ell}}),z)$,
\\\\
where
\begin{align*}
&L_{1}(A)=br\sum_{k=1}^{\frac{1}{2}(p-1)}kn_{k}+\sum_{k=1}^{\frac{1}{2}(p-1)}\FL{\frac{rka}{p}} n_{k}+\sum_{k=1}^{\frac{1}{2}(p-1)}\FL{\frac{rk}{p}} n_{k},
\\
&L_{2}(A)=r^2\sum_{k=1}^{\frac{1}{2}(p-1)}k^2n_{k},
\\
&L_{3}(A)=n_0+3\sum_{k=1}^{\frac{1}{2}(p-1)}n_{k}.
\end{align*}
For $m=0$, by Theorem \thm{modular}, we have  $L_{3}(A)\equiv 0 \pmod{24}$ and 
\begin{align*}
L_{1}(A)+abL_2(A)&=br\sum_{k=1}^{\frac{1}{2}(p-1)}kn_{k}+\sum_{k=1}^{\frac{1}{2}(p-1)}\FL{\frac{rka}{p}} n_{k}+\sum_{k=1}^{\frac{1}{2}(p-1)}\FL{\frac{rk}{p}} n_{k}+abr^2\sum_{k=1}^{\frac{1}{2}(p-1)}k^2n_{k}
\\
&\equiv br(1+a)\sum_{k=1}^{\frac{1}{2}(p-1)}kn_{k}+\sum_{k=1}^{\frac{1}{2}(p-1)}\Big(\FL{\frac{rka}{p}}+\FL{\frac{rk}{p}}\Big) n_{k} \pmod{2}.
\end{align*}
Therefore
\\
$j(p,\pi_r(\overrightarrow{n_{\ell}}),z)\stroke{}{A}{1}=(-1)^{L_{r,\ell}(A)}\,j(p,\pi_{ra}(\overrightarrow{n_{\ell}}),z),$ where
$L_{r,\ell}(A)=br(1+a)\sum\limits_{k=1}^{\frac{1}{2}(p-1)}kn_{k}+\sum\limits_{k=1}^{\frac{1}{2}(p-1)}\Big(\FL{\frac{rka}{p}}+\FL{\frac{rk}{p}}\Big) n_{k}$.
\\\\\\
Using the transformation above, we have
\begin{align*}
&\sum_{r=1}^{\frac{1}{2}(p-1)}c_{p,r,{\ell}}(\zeta_p)j(p,\pi_r(\overrightarrow{n_{\ell}}),z)\strokeb{A}{1}=\sum_{r=1}^{\frac{1}{2}(p-1)}c_{p,r,\ell}(\zeta_p)(-1)^{L_{r,\ell}(A)}\,j(p,\pi_{ra}(\overrightarrow{n_{\ell}}),z).\\
\end{align*}

Since $ad\equiv 1(mod~p)$, taking $r\rightarrow dr$ we have 

\begin{align*}
&\sum_{r=1}^{\frac{1}{2}(p-1)}c_{p,dr,\ell}(\zeta_p)(-1)^{L_{dr,\ell}(A)}\,j(p,\pi_{dra}(\overrightarrow{n_{\ell}}),z)=\sum_{r=1}^{\frac{1}{2}(p-1)}c_{p,dr,\ell}(\zeta_p)(-1)^{L_{dr,\ell}(A)}\,j(p,\pi_{r}(\overrightarrow{n_{\ell}}),z).\\
\end{align*}

Thus, comparing the coefficients with $\mathcal{K}_{p,0}(\zeta_p^d,z)$, we have
\begin{align*}
&c_{p,r,\ell}(\zeta_p^d)=\frac{\sin(d\pi/p)}{\sin(\pi/p)}\,(-1)^{d+1}c_{p,dr,\ell}(\zeta_p)(-1)^{L_{dr,\ell}(A)}\,\\
\text{or} \quad & c_{p,d,\ell}(\zeta_p)=\frac{\sin(\pi/p)}{\sin(d\pi/p)}\,(-1)^{d+1+L_{d,\ell}(A)}c_{p,1,\ell}(\zeta_p^d).\\
\end{align*}
\end{proof}

\section{Lower bounds for order of at cusps}

In this section, we calculate lower bounds for the orders of $\mathcal{K}_{p,m}(\zeta_p,z)$ at the cusps of $\Gamma_1(p)$, which we use in proving the $\mathcal{K}_{p,m}(\zeta_p,z)$ identities in the subsequent section.
\\\\

\begin{theorem}
\label{thm:ordlb}
Let $p \geq 3$ be a prime and $0 \leq m \leq p-1$. Then 
\begin{enumerate}
\item[(i)]
$$
\ord(\mathcal{K}_{p,m}(\zeta_p,z);0) 
\begin{cases}
      \geq 0 & \text{if}\ p=5, 7, \\
      =\frac{-1}{24p}(p-5)(p-7) & \text{if}\ p>7;
\end{cases} 
$$
\item[(ii)]
$$
\ord(\mathcal{K}_{p,m}(\zeta_p,z);\frac{1}{n}) 
\begin{cases}
 =\frac{-3}{2p}(\frac{1}{6} (p-1)-n)(\frac{1}{6} (p+1)-n) & \text{if}\ 2\leq n< \frac{1}{6}(p-1), \\
 \geq 0 & otherwise;      
\end{cases}
$$  
\item[(iii)]
$$
\ord(\mathcal{K}_{p,m}(\zeta_p,z);\frac{n}{p}) \geq 
\begin{cases}
      \frac{1}{24p}(p^2-1) & \text{if}~m=0~\text{or}~\leg{-24m}{p}=-1  , \\
    \frac{1}{2}-\frac{3}{2p} & otherwise.      
\end{cases}
$$            
\end{enumerate}
\end{theorem}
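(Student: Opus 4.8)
The plan is to reduce the whole computation to the single function $\JSpz{z}$ and the Atkin operator. By Propositions \propo{Kpm2} and \propo{JJSid} we have $\mathcal{K}_{p,m}(\zeta_p,z)=\sin(\pi/p)\,\JSpz{z}\strokeb{U_{p,m}}{1}$, and expanding the operator through the cosets $T_r=\begin{pmatrix}1&r\\0&p\end{pmatrix}$ writes this as $p^{-1/2}\sin(\pi/p)\sum_{r=0}^{p-1}\zeta_p^{-rm}\,\JSpz{z}\strokeb{T_r}{1}$. Since $\JSpz{z}=\frac{\eta(p^2z)}{\eta(z)}\Fell{1}{1}{p}{z}-2\chi_{12}(p)\sum_{\ell}(-1)^{\ell+1}\sin(6\ell\pi/p)\Fell{2}{\ell}{p}{p^2z}$ by \eqn{JSdef}, the orders at the cusps are governed by the order of the modular function $\eta(p^2z)/\eta(z)$ on $\Gamma_0(p^2)$ together with the leading exponents of the Maass-form pieces $\Fell{1}{1}{p}{z}$ and $\Fell{2}{\ell}{p}{p^2z}$.

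To read off the order at a cusp $\mathfrak{a}=a/c$ I would pick $\sigma\in\SLZ$ with $\sigma\,\infty=\mathfrak{a}$ — taking $S$ for the cusp $0$, $\begin{pmatrix}1&0\\n&1\end{pmatrix}$ for $1/n$, and a matrix with bottom row $(p,\ast)$ for $n/p$ — and compute each $\JSpz{z}\strokeb{T_r\sigma}{1}$. Every $T_r\sigma$ has determinant $p$, so I factor it as $\gamma V$ with $\gamma\in\SLZ$ and $V$ upper triangular; the transformation laws \eqn{Fmult} (Theorem \thm{mainthm}), \eqn{F1SF2}, \eqn{F2z+p} and \eqn{etatrans} then move $\Fell{1}{1}{p}{z}$ and $\Fell{2}{\ell}{p}{p^2z}$ into standard position, while $V$ supplies the cusp-width rescaling. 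The leading exponent of each transformed piece is visible from the theta expansions \eqn{Theta1abcid} and \eqn{Theta1acdef}: the generic term carries a $q$-order of the shape $\tfrac32\big(n+\tfrac16-\tfrac{b}{c}\big)^2$, and minimising this quadratic over the summation index $n$, after adding the $\eta(p^2z)/\eta(z)$ contribution, produces exactly the quantities $\frac{-1}{24p}(p-5)(p-7)$, $\frac{-3}{2p}\big(\tfrac16(p-1)-n\big)\big(\tfrac16(p+1)-n\big)$ and $\frac{1}{24p}(p^2-1)$ recorded in (i)--(iii).

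The summation over $r$ is what decides equalities against mere lower bounds. When a single index $r$ together with a single theta term realises the minimal exponent, and its coefficient (a product of the multiplier $\mu$ from \eqn{Fmult}, the $\eta$-multiplier of \eqn{etatrans} and a $\sin(6\ell\pi/p)$ weight) is a nonzero root of unity, there is no cancellation and the bound is an equality; when several indices share the minimal exponent I can only guarantee the displayed inequality. For the cusps $n/p$ in (iii), which are $\Gamma_0(p)$-equivalent to $\infty$, the operator $U_{p,m}$ concentrates the expansion, and the dichotomy in the bound records whether the $\Fell{2}{\ell}{p}{p^2z}$ correction term survives: by Proposition \propo{F2transU} it contributes to $U_{p,m}$ exactly when $(6\ell)^2\equiv-24m\pmod p$ is solvable, i.e. when $\leg{-24m}{p}=1$, dropping the order to $\tfrac12-\tfrac{3}{2p}$, whereas for $m=0$ or $\leg{-24m}{p}=-1$ only the $\Fell{1}{1}{p}{z}$ part remains and the order equals $\frac{1}{24p}(p^2-1)$.

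The main obstacle will be the bookkeeping of the $U_{p,m}$-sum at the cusps $1/n$ in part (ii), where the quadratic $\tfrac32\big(n'+\tfrac16-\tfrac{b}{c}\big)^2$ must be minimised simultaneously over the theta index and over $r$, and one has to check that the minimising term is unique — so that the order is the stated quadratic in $n$ — precisely in the range $2\le n<\tfrac16(p-1)$, the remaining cusps degenerating to the trivial bound $\ge 0$. Confirming that the accumulated root-of-unity coefficients never conspire to annihilate the minimal term, and treating the small primes $p=5,7$ in (i) by hand (there $(p-5)(p-7)=0$ and one only asserts $\ge 0$), is the delicate part; the rest is the routine quadratic minimisation sketched above.
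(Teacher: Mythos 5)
Your reduction follows the paper's skeleton exactly: by Propositions \propo{Kpm2} and \propo{JJSid} you write $\mathcal{K}_{p,m}(\zeta_p,z)=\sin(\pi/p)\,\JSpz{z}$ acted on by $U_{p,m}$, you use Proposition \propo{F2transU} to explain the dichotomy in (iii) (the $\Fell{2}{\ell}{p}{p^2z}$ term survives $U_{p,m}$ exactly when $\leg{-24m}{p}=1$), and you finish by transforming the pieces to each cusp and taking minima, with equality when the minimal term is unique. What you are missing is how the paper obtains the orders of the individual pieces: for the main ingredient, the cusp orders of $\SFell{1}{1}{p}{\tfrac{z+k}{p}}$, the paper does no computation at all --- it quotes them from \cite[Theorem 6.9]{Ga19a}, including the fact that the minimum over $k$ is attained at a unique $k$ ($k=0$ at the cusp $0$, and $nk\equiv -1\pmod{p}$ at $1/n$), which is precisely what upgrades the lower bounds to the equalities in (i) and (ii); the only fresh computation is for the single extra term $\Fell{2}{\ell}{p}{pz}$.

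The genuine gap is in your substitute for that ingredient. You propose to read the leading exponents of the transformed pieces from the theta expansions \eqn{Theta1abcid} and \eqn{Theta1acdef} and to minimise the quadratic $\tfrac32\bigl(n+\tfrac16-\tfrac{b}{c}\bigr)^2$ over the theta index. But those theta series enter $\JSpz{z}$ only as integrands of the period integrals $T_1,T_2$ in \eqn{T1def}--\eqn{T2def}, and by \eqn{T1id} (equivalently Proposition \propo{JJSid}) these non-holomorphic parts cancel identically out of $\mathcal{K}_{p,m}$; they contribute nothing to the principal parts at cusps. The orders in (i)--(iii) are governed instead by the $q$-expansions of the holomorphic constituents $\Nell{\ell}{p}{z}$, $\Mell{\ell}{p}{z}$ and, crucially, $\varepsilon_2\Lpar{\frac{\ell}{p};z}$ of \eqn{eps2}, whose exponent is $-\tfrac32\bigl(\tfrac{\ell}{p}-\tfrac16\bigr)^2$: the \emph{negative} of your quadratic, and a single exponent rather than an integer-indexed family one can minimise. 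This is not cosmetic: your quadratic is nonnegative, so minimising it can only ever produce values like $\tfrac32(\tfrac{n}{p}-\tfrac16)^2$, whereas the correct Maass-piece contribution at the cusp $1/n$ is $\tfrac1{24}-\tfrac32(\tfrac{n}{p}-\tfrac16)^2$ per unit width (coming from $\eta(z)\varepsilon_2$), and the two differ and even have opposite monotonicity in $n$; the exact values $\frac{-1}{24p}(p-5)(p-7)$ and $\frac{-3}{2p}\bigl(\tfrac16(p-1)-n\bigr)\bigl(\tfrac16(p+1)-n\bigr)$ cannot be recovered from your recipe. To repair the argument you must either invoke \cite[Theorem 6.9]{Ga19a} as the paper does, or redo that analysis using the correct expansions of $\mathcal{M}$, $\mathcal{N}$ and $\varepsilon_2$ after applying the transformation formulas --- not the shadow theta functions.
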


\begin{proof}
We define
\beq
\SFell{1}{1}{p}{z} = \frac{\eta(p^2 z)}{\eta(z)}\, \Fell{1}{1}{p}{z}.
\label{eq:SFelldef}
\eeq
By Definition \refdef{Kpm} and Proposition \propo{JJSid}, we have,
\begin{align*}
&\mathcal{K}_{p,m}(\zeta_p,z) 
=  \sin\Lpar{\frac{\pi}{p}} \, 
\stroke{\Jpz{1}{z}}{U_{p,m}}{1}
\\
&=\sin\Lpar{\frac{\pi}{p}} \Big[ \frac{\eta(p^2 z)}{\eta(z)} \, \Fell{1}{1}{p}{z}
 - 
2\,\chi_{12}(p) \,
\sum_{\ell=1}^{\frac{1}{2}(p-1)} (-1)^{\ell} 
\sin\Lpar{\frac{6\ell \pi}{p}} \,
\Fell{2}{\ell}{p}{p^2 z} \stroke{\Big]}{U_{p,m}}{1}
\\
&=\begin{cases}
\sin\Lpar{\frac{\pi}{p}} \, \SFell{1}{1}{p}{z}\stroke{}{U_{p,m}}{1},  \text{if}~m=0~\text{or}~\leg{-24m}{p}=-1  , \\
\\
\sin\Lpar{\frac{\pi}{p}} \Big[  \, \SFell{1}{1}{p}{z}\stroke{}{U_{p,m}}{1}
 - 
2\,\chi_{12}(p) \,
 (-1)^{\ell} 
\sin\Lpar{\frac{6\ell \pi}{p}} \,
\Fell{2}{\ell}{p}{p z}\Big], & \text{if}~\leg{-24m}{p}=1, 
\end{cases} 
\\\\
&\text{where}~1 \le \ell \le \tfrac{1}{2}(p-1),
(6\ell)^2 \equiv -24m\pmod{p}, \text{when}~\leg{-24m}{p}=1.
\end{align*}
Therefore
\begin{align*}
& \mathcal{K}_{p,m}(\zeta_p,z)\\
&=\begin{cases}
\frac{\sin\Lpar{\frac{\pi}{p}}}{p}   \,\sum_{k=0}^{p-1} \zeta_p^{-km}  \SFell{1}{1}{p}{z}\stroke{}{T_k}{1}, \text{if}~m=0~\text{or}~\leg{-24m}{p}=-1  , \\
\\
\frac{\sin\Lpar{\frac{\pi}{p}}}{p}   \,\sum_{k=0}^{p-1} \zeta_p^{-km}  \SFell{1}{1}{p}{z}\stroke{}{T_k}{1}
\\
\qquad- 
2\,\chi_{12}(p) \,
 (-1)^{\ell} 
\sin\Lpar{\frac{\pi}{p}}
\sin\Lpar{\frac{6\ell \pi}{p}} \,
\Fell{2}{\ell}{p}{p z},~\text{if}~\leg{-24m}{p}=1.
\end{cases} 
\end{align*}
\\
We consider $\mathcal{K}_{p,m}(\zeta_p,z) \stroke{}{A}{1}$ and evaluate the order of $\mathcal{K}_{p,m}(\zeta_p;z)$ at the cusps considering suitable $A \in \SL_2(\mathbb{Z})$.
\\\\
Order of $\SFell{1}{1}{p}{z}\stroke{}{T_kA}{1}$ at the cusps is calculated in \cite[Theorem 6.9, p.237-238]{Ga19a} as :
\\\\
$\hord(\SFell{1}{1}{p}{\frac{z+k}{p}};0)=\begin{cases}
      0 & \text{if}\ k\neq 0, \\
      1 & \text{if}\ k=0, p=5, \\
       \frac{-1}{24p}(p-5)(p-7) & \text{if}\ k=0, p>5;
    \end{cases}$
\\\\\\
$\hord(\SFell{1}{1}{p}{\frac{z+k}{p}};\frac{1}{n})\begin{cases}
      = 0 & \text{if}\ nk \not\equiv -1(\textrm{mod}\ p), \\ 
       =\frac{-3}{2p}(\frac{1}{6} (p-1)-n)(\frac{1}{6} (p+1)-n) & \text{if}\ nk \equiv -1(\textrm{mod}\ p), 2\leq n \leq \frac{p-1}{6}, \\
       >0 & \text{if}\ nk \equiv -1(\textrm{mod}\ p), \frac{p-1}{6}< n \leq \frac{1}{2}(p-1);
    \end{cases}$
\\\\\\
$\hord(\SFell{1}{1}{p}{\frac{z+k}{p}};\frac{n}{p})=\frac{p^2-1}{24p}.$
\\\\\\
We now look at $\Fell{2}{\ell}{p}{p z} \stroke{}{A}{1}$ and subsequent lower bounds of order at the cusps.
\\\\
Now, as in \cite{Ga19a}, we examine each cusp $\zeta$ of $\Gamma_1(p)$. We choose $A = \begin{pmatrix} a & b \\ c & d\end{pmatrix} \in \SL_2(\mathbb{Z})$ so that $A(\infty)=\frac{a}{c}=\zeta$. Also, let $P = \begin{pmatrix} p & 0 \\ 0 & 1\end{pmatrix}$.
\\\\\\
(i) $\zeta=0 \Rightarrow a=0,c=1$. Let $S = \begin{pmatrix} 0 & -1 \\ 1 & 0\end{pmatrix} \Rightarrow S(\infty)=0=\zeta$. Then, using \cite[Theorem 4.5, p.220]{Ga19a}, we have 
\\
\begin{align*}
\Fell{2}{\ell}{p}{pz} \strokeb{S}{1}=
 \frac{i}{\sqrt{p}}\,\Fell{1}{\ell}{p}{z} \strokeb{S}{1} \strokeb{P}{1} \strokeb{S}{1}&=\frac{i}{\sqrt{p}}\Fell{1}{\ell}{p}{z} \strokeb{\begin{pmatrix} -1 & 0 \\ 0 & -p\end{pmatrix}}{1}
 \\
 &=\frac{-i}{p}\,\Fell{1}{\ell}{p}{\frac{z}{p}}.
\end{align*}

Therefore
\begin{align*}
\hord(\Fell{2}{\ell}{p}{p z};0)&=\frac{1}{p}.\hord(\Fell{1}{\ell}{p}{z};\infty)=0.
\end{align*}

By considering 
$\displaystyle\min_{0\leq k \leq p-1} \hord\Big(\SFell{1}{1}{p}{\frac{z+k}{p}};0\Big)$, 
we have $$\ord(\mathcal{K}_{p,m}(\zeta_p,z);0) 
\begin{cases}
      \geq 0 & \text{if}\ p=5, 7, \\
      =\frac{-1}{24p}(p-5)(p-7) & \text{if}\ p>7.
\end{cases} $$
\\\\    
(ii) $\zeta=\frac{1}{n},  2 \leq n \leq \frac{1}{2}(p-1)$. Let $A = \begin{pmatrix} 1 & 0 \\ n & 1\end{pmatrix} \Rightarrow A(\infty)=\frac{1}{n}=\zeta$. 
Then, using \cite[Theorem 4.5, p.220]{Ga19a}, we have,
\begin{align*}
\Fell{2}{\ell}{p}{pz} \strokeb{A}{1}
=\frac{i}{\sqrt{p}}\,\Fell{1}{\ell}{p}{z} \strokeb{S}{1} \strokeb{P}{1} \strokeb{A}{1}
&=\frac{i}{\sqrt{p}}\,\Fell{1}{\ell}{p}{z} \strokeb{\begin{pmatrix} -n & -1 \\ p & 0\end{pmatrix}}{1}
\\
&=\frac{i}{\sqrt{p}}\,\Fell{1}{\ell}{p}{z} \strokeb{CN}{1},
\end{align*}
 where
$
C = \begin{pmatrix} -n & \frac{nk'-1}{p} \\ p & -k'\end{pmatrix},
N = \begin{pmatrix} 1 & k' \\ 0 & p\end{pmatrix}$,
and $k'$ is chosen so that $nk'\equiv 1\pmod{p}$ and $C \in \Gamma_0(p)$.
\\\\ 
Then, using \cite[Theorem 4.1, p.218]{Ga19a}, we have 
\begin{align*}
\Fell{2}{\ell}{p}{pz} \strokeb{A}{1}=\frac{i}{\sqrt{p}} \mu(C,\ell)\Fell{1}{\overline{-k' \ell}}{p}{z} \strokeb{N}{1},
\end{align*} 
 where $\mu(C,\ell) = \exp(-\tfrac{3 \pi i}{p}  k' \ell^2) 
(-1)^{\ell + \FL{-k'\ell/p}}$.
Therefore
\begin{align*}
\hord(\Fell{2}{\ell}{p}{p z};\frac{1}{n})&=\frac{1}{p}.\hord(\Fell{1}{\overline{-k' \ell}}{p}{z};\infty)=0.   
\end{align*}
\\
By considering $\min_{0\leq k \leq p-1} \hord\Big(\SFell{1}{1}{p}{\frac{z+k}{p}};\frac{1}{n}\Big)$, we have $$\ord(\mathcal{K}_{p,m}(\zeta_p,z);\frac{1}{n}) \begin{cases}
      =\frac{-3}{2p}(\frac{1}{6} (p-1)-n)(\frac{1}{6} (p+1)-n) & \text{if}\ 2\leq n< \frac{1}{6}(p-1), \\
      \geq 0 & otherwise.      
    \end{cases}$$ 
\\\\
(iii) $\zeta=\frac{n}{p},  2 \leq n \leq \frac{1}{2}(p-1)$. We choose integers $b,d$ so that $A = \begin{pmatrix} n & b \\ p & d\end{pmatrix} \in \SL_2(\mathbb{Z}),$  and $A(\infty)=\frac{n}{p}=\zeta$.
\\
We have seen in the proof of Proposition \propo{F2transUA} that when 
$A = \begin{pmatrix} a & k \\ p & d\end{pmatrix} \in \Gamma_0(p)$, then 
$$
\Fell{2}{\ell}{p}{p z} \stroke{}{A}{1}
=\mu(B,\ell)\, \Fell{2}{\overline{a\ell}}{p}{z}\stroke{}{P}{1},
$$
where
$$
P = \begin{pmatrix} p & 0 \\ 0 & 1\end{pmatrix},\quad
B = \begin{pmatrix} d & -1 \\ -pk & a\end{pmatrix}
\in \Gamma_0(p) \quad\mbox{and}\quad 
\mu(B,\ell) = \exp( -\tfrac{3 \pi i}{p} a k \ell^2) 
(-1)^{k\ell + \FL{a\ell/p}}.
$$
Therefore
\begin{align*}
 \hord(\Fell{2}{\ell}{p}{p z};\frac{n}{p})&=p.\hord(\Fell{2}{\overline{n\ell}}{p}{z};\infty)
\\
&=\begin{cases}
      \frac{{\overline{n\ell}}}{2}-\frac{3({\overline{n\ell}})^2}{2p} & \text{if}\ 1 \leq {\overline{n\ell}} < \frac{p}{6}, \\
      \frac{3{\overline{n\ell}}}{2}-\frac{3({\overline{n\ell}})^2}{2p} & \text{if}\ \frac{p}{6} \leq {\overline{n\ell}} < \frac{5p}{6}, \\
      \frac{5{\overline{n\ell}}}{2}-\frac{3({\overline{n\ell}})^2}{2p}-p & \text{if}\ \frac{5p}{6} \leq {\overline{n\ell}} < p.
    \end{cases}   
\end{align*}
By a calculation, $\hord(\Fell{2}{\ell}{p}{p z};\frac{n}{p}) \geq \frac{1}{2}-\frac{3}{2p}.$
\\\\
It follows that $\ord(\mathcal{K}_{p,m}(\zeta_p,z);\frac{n}{p}) \geq \begin{cases}
      \frac{1}{24p}(p^2-1) & \text{if}~m=0~\text{or}~\leg{-24m}{p}=-1  , \\
      \frac{1}{2}-\frac{3}{2p} & otherwise.      
    \end{cases}
$\\\\
\end{proof}

\section{Rank mod $p$ identities for $p=11, 13, 17$ and $19$}

In this section, we find and prove identities for  
$\mathcal{K}_{p,m}(\zeta_{p},z)$ when $p=11, 13, 17$ and  $19$ in terms 
of generalized eta-functions defined in \eqn{Geta}. Identities of this 
kind were first studied by Atkin and Hussain \cite{At-Hu} for rank mod $11$. 
Rank mod $13$ identities were subsequently considered by O'Brien in his 
thesis \cite{OB-thesis}. The identities for $p=17$ and $19$ are new.
In general, these identities are of the form 
\beq
\mathcal{K}_{p,m}(\zeta_{p},z)= \sum\limits_{k=1}^r c_{p,m,k}\, j_{p,m,k}(z)
\label{eq:Kpmid2}
\eeq
 where $j_{p,m,k}(z)$ are quotients of generalized 
eta-functions, and the $c_{p,m,k}$ are cyclotomic integers.     

In the first part of this section we describe an algorithm for proving
identities of this type using the Valence Formula (Theorem \thm{val} below).
By Theorem \thm{Kpmtransid} (and noted in Corollary \corol{resnonres}),
for each $p$ we need only give identities for three particular cases of 
$-24m\pmod{p}$, corresponding to $0$, a quadratic residue and a 
quadratic non-residue mod $p$.

In Section \subsect{mod11} we give detail of the algorithm for the
case $p=11$. In Section \subsect{mod13} we list the three identities
for $p=13$ but omit detail of the algorithm. In Sections \subsect{mod17}
and \subsect{mod19} we only give the form of the identities for $p=17$ and
$p=19$ omitting the values of the coefficients.

From 
Theorem \thm{KpmthmG}, we know that $\mathcal{K}_{p,m}(\zeta_{p},z)$ is 
a weakly holomorphic modular form of weight $1$ on $\Gamma(p)$.  The proof 
of the identities primarily involves establishing the equality using the 
Valence formula and showing that the RHS is also a weakly holomorphic 
modular form of weight $1$ on $\Gamma(p)$.  To that end we first state 
here the Valence Formula.
 
\begin{theorem}[The Valence Formula \cite{Ra} (p.98)]
\label{thm:val}

Let $f\ne0$ be a modular form of weight $k$ with respect to a subgroup $\Gamma$ of finite index
in $\Gamma(1)=\SL_2(\mathbb{Z})$. Then
\beq
\ORD(f,\Gamma) = \frac{1}{12} \mu \, k,
\label{eq:valform}
\eeq
where $\mu$ is index $\widehat{\Gamma}$ in $\widehat{\Gamma(1)}$,
$$
\ORD(f,\Gamma) := \sum_{\zeta\in R^{*}} \ORD(f,\zeta,\Gamma),
$$
$R^{*}$ is a fundamental region for $\Gamma$,
and
\begin{equation}
\label{eq:ORDdef}
\ORD(f;\zeta;\Gamma) = n(\Gamma;\zeta)\, \ord(f;\zeta),
\end{equation}
for a cusp $\zeta$ and
$n(\Gamma;\zeta)$ denotes the fan width of the cusp $\zeta \pmod{\Gamma}$.
\end{theorem}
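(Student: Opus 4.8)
The plan is to deduce the formula for a general finite-index $\Gamma$ from the classical valence formula on the full modular group, transferring it by a norm (multiplicative averaging) over cosets. First I would choose representatives $\gamma_1,\dots,\gamma_\mu$ for the right cosets of $\widehat{\Gamma}$ in $\widehat{\Gamma(1)}$, so that $\mu$ is exactly the index appearing in \eqn{valform}, and form the norm
$$
g := \prod_{j=1}^{\mu} \stroke{f}{\gamma_j}{k}.
$$
For any $\delta\in\Gamma(1)$, right multiplication by $\delta$ permutes the cosets $\widehat{\Gamma}\gamma_j$, so each factor $\stroke{f}{\gamma_j\delta}{k}$ equals $\stroke{f}{\gamma_{\sigma(j)}}{k}$ up to the automorphy of $f$ under $\Gamma$; hence $\stroke{g}{\delta}{k\mu}=g$ and $g$ is a modular form of weight $k\mu$ on $\Gamma(1)$. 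If $f$ carries a nontrivial multiplier system the same computation applies once one verifies that the product of multipliers over a full set of cosets is trivial, which is where the half-integral-weight bookkeeping of the present paper would enter.

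Next I would invoke the base case $\ORD(g,\Gamma(1))=\tfrac{1}{12}(k\mu)$, i.e. the textbook identity
$$
\ord(g;\infty)+\tfrac{1}{2}\ord(g;i)+\tfrac{1}{3}\ord(g;\rho)+\!\!\!\sum_{P\neq i,\rho,\infty}\!\!\!\ord(g;P)=\frac{k\mu}{12},
$$
whose proof integrates $\tfrac{1}{2\pi i}\,d\log g$ around the standard fundamental region: the interior gives the count of zeros minus poles, the horizontal arc near the cusp gives $-\ord(g;\infty)$ after passing to $q=\exp(2\pi i z)$, the two vertical sides cancel by periodicity, the two circular arcs are matched by $S$ and produce the fractional terms at $i$ and $\rho$, and the residual contribution assembles, through the hyperbolic area $\pi/3$ of the region, into $k\mu/12$.

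Finally I would match the two sides. With $\mathcal{F}_0$ a fundamental region for $\Gamma(1)$, the translates $\gamma_1\mathcal{F}_0,\dots,\gamma_\mu\mathcal{F}_0$ tile a fundamental region $R^\ast$ for $\Gamma$, and since $\stroke{f}{\gamma_j}{k}$ vanishes (or has a pole) at $\tau_0$ precisely when $f$ does at $\gamma_j\tau_0$, the divisor of $g$ over $\mathcal{F}_0$ is exactly the divisor of $f$ over $R^\ast$. The local weights agree automatically: the fan width $n(\Gamma;\zeta)$ at a cusp, and the elliptic order at a point lying over $i$ or $\rho$, are precisely the ramification indices of the covering $\mathfrak{h}^\ast/\Gamma\to\mathfrak{h}^\ast/\Gamma(1)$, so that $\ORD(g,\Gamma(1))=\ORD(f,\Gamma)$ and \eqn{valform} follows.

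The hard part will be this last matching step: proving that the $\Gamma$-orders, together with their fan-width and elliptic weightings, repackage exactly into the $\Gamma(1)$-orders of $g$ --- including the behaviour at the several inequivalent cusps of $\Gamma$ and at points over the elliptic vertices --- and verifying triviality of the product multiplier system in the half-integral-weight setting. An alternative that avoids the norm is to run the contour-integration argument directly on a fundamental polygon for $\Gamma$; there the only genuinely new feature is summing the side-pairing contributions $\tfrac{k}{2\pi i}\int \tfrac{c\,dz}{cz+d}$ over all paired geodesic sides, which by a Gauss--Bonnet computation equals $\tfrac{k}{4\pi}$ times the hyperbolic area $\tfrac{\pi}{3}\mu$ of the region and hence again yields $\tfrac{k\mu}{12}$. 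This second route makes the origin of the constant $\tfrac{1}{12}$ transparent but demands more care with the cusps and elliptic vertices sitting on the boundary.
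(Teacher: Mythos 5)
There is nothing in the paper to compare your argument against: Theorem \thm{val} is not proved in the paper at all. It is the classical Valence Formula, quoted together with Rankin's notation and normalizations (invariant orders, fan widths, $\ORD$) directly from \cite{Ra}, and it is used in Section 7 purely as a black box in the algorithm for verifying the rank identities. So the only question is whether your outline is a correct proof of the cited statement.

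In outline it is, and it is one of the two standard routes (norm transfer to $\Gamma(1)$ versus direct contour integration over a fundamental polygon of $\Gamma$; both are classical). One step, however, is genuinely wrong as written: you propose to verify that ``the product of multipliers over a full set of cosets is trivial.'' That is neither true in general nor what you need. The norm $g=\prod_{j}\stroke{f}{\gamma_j}{k}$ simply carries \emph{some} weight-$k\mu$ multiplier system on $\Gamma(1)$ --- already for $f=\eta$, $\Gamma=\Gamma(1)$, $\mu=1$ it is the nontrivial eta multiplier --- so the base case you must invoke is the valence formula on $\Gamma(1)$ for forms with an arbitrary multiplier system, in which the order at $i\infty$ is read from the expansion in $q$ shifted by the cusp parameter $\kappa\in[0,1)$ determined by $\upsilon(T)=e^{2\pi i\kappa}$; the contour argument goes through verbatim in that setting, the vertical sides still cancelling because $\upsilon(T)$ is a constant. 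This is not a pedantic point for the present paper: every form fed into the theorem here (weight-$1$ forms with multiplier, the generalized eta-quotients $j(p,\overrightarrow{n},z)$, and $\eta$ itself) has a nontrivial multiplier system, which is exactly why the statement is taken in Rankin's framework. The remaining work is where you say it is: grouping the coset representatives by the cusp class of $\Gamma$ they represent gives $\ord(g;i\infty)=\sum_{\zeta}n(\Gamma;\zeta)\,\ord(f;\zeta)$ with $\sum_{\zeta}n(\Gamma;\zeta)=\mu$, and the analogous ramification count at points above $i$ and $\rho$ handles the elliptic weights; once that matching is written out, your first route yields a complete and correct proof.
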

\begin{remark}
For $\zeta\in\mathfrak{h}$,
$\ORD(f;\zeta;\Gamma)$ is defined in terms of 
 the invariant order $\ord(f;\zeta)$, which  is interpreted
in the usual sense. See \cite[p.91]{Ra} for details of this and the 
notation used.
\end{remark}

\noindent
{\bf An Algorithm for Proving Rank Mod $p$ Identities.}
We describe an algorithm for proving rank mod $p$ identities that utilizes the
Valence Formula. We apply this algorithm
with the aid of the MAPLE packages THETAIDS and ETA developed by the first author.

We note that in the actual identities deduced and proved in the subsequent subsection, we write them in terms of the permutation $\pi_r$ and the generalized eta-functions $j(p,\pi_r(\overrightarrow{n}),z)$ defined as in Definition \refdef{perm} and \refdef{geneta}. In our algorithm, we write $\mathcal{K}_{p,m}(\zeta_{p},z)$ in terms of generalized eta functions $j_{p,m,k}$, written in a general sense as in \eqn{Kpmid2}, where the sum is finite and the coefficients $c_{p,m,k}$ are nonzero.  

\begin{enumerate}
\item[\textit{Step 1.}] Use Theorem \thm{modular} to check the three modularity conditions for each $j_{p,m,k}(z)$, $1\leq k \leq r$ in the RHS of the expression \eqn{Kpmid2}. This shows that the RHS of \eqn{Kpmid2} is a weakly holomorphic modular form of weight $1$ on $\Gamma(p)$ satisfying the same modularity condition as $\mathcal{K}_{p,m}(\zeta_{p},z)$ in Theorem \thm{KpmthmG}.\\\\
Calculate orders at $i\infty$ of each generalized eta quotient $j_{p,m,k}$.\\

\item[\textit{Step 2.}] For the cases when $m\neq 0$, convert the eta quotients to weight $0$ by dividing each by the eta quotient having the lowest order at $i \infty$ i.e. choose $k=k_0$ such that  
$$
\ORD(j_{p,m,k_0}(z),i\infty,\Gamma_1(p))
=\min\limits_{1 \leq k \leq r} \ORD(j_{p,m,k}(z),i\infty,\Gamma_1(p)).
$$ 
Let $j_0=j_{p,m,k_0}(z)$. Then \eqn{Kpmid2} has the following equivalent form : 
\beq
\frac{\mathcal{K}_{p,m}(\zeta_{p},z)}{j_0}= \sum\limits_{k=1}^r c_{p,m,k}\, \frac{j_{p,m,k}(z)}{j_0} 
\label{eq:Kpmid3}
\eeq\\
We note that the LHS and each term on the RHS is a modular function on $\Gamma_1(p)$.
When $m=0$, we skip this step.\\
\item[\textit{Step 3.}] At each cusp $s \in \mathcal{S}_p$ given in Proposition \propo{cusps1}, calculate 
\begin{align*}
\ORD\left(j_{p,m,k}(z),s,\Gamma_1(p)\right)\quad \text{when} \quad m=0, 
\\
\ORD\left(\frac{j_{p,m,k}(z)}{j_0},s,\Gamma_1(p)\right)\quad \text{when} \quad m\neq 0.
\end{align*}

for $1\leq k \leq r$, using Proposition \propo{ncuspORDS}.

\item[\textit{Step 4.}] At each cusp $s$, calculate the lower bound $\lambda(p,m,s)$ of 
\begin{align*}
\ORD\left(\mathcal{K}_{p,m}(\zeta_{p},z),s,\Gamma_1(p)\right)\quad \text{when} \quad m=0, 
\\
\ORD\left(\frac{\mathcal{K}_{p,m}(\zeta_{p},z)}{j_0},s,\Gamma_1(p)\right)\quad \text{when} \quad m\neq 0.
\end{align*}

using Theorem \thm{ordlb}. We note that value is an integer.

\item[\textit{Step 5.}] Calculate 

 \begin{equation}
\label{eq:BLB1}
    B=
    \begin{cases}
     \sum\limits_{s \in \mathcal{S}_p,s \neq i\infty} \min\left(\left\{\ORD\left(j_{p,m,k}(z),s,\Gamma_1(p)\right):1 \leq k \leq r\right\}\cup \{\lambda(p,m,s)\}\right), & \text{if}\ m=0 
\\
      \sum\limits_{s \in \mathcal{S}_p,s \neq i\infty} \min\left(\left\{\ORD\left(\frac{j_{p,m,k}(z)}{j_0},s,\Gamma_1(p)\right):1 \leq k \leq r\right\}\cup \{\lambda(p,m,s)\}\right), & \text{if}\ m\neq 0
    \end{cases}
  \end{equation}

\item[\textit{Step 6.}] Show 
$$
\ORD(LHS-RHS~\text{of}~\eqn{Kpmid2},i \infty,\Gamma_1(p))\ge -B+1+\frac{\mu}{12}~\text{if}~m=0,
$$ 
$$
\ORD(LHS-RHS~\text{of}~\eqn{Kpmid3},i \infty,\Gamma_1(p))\ge -B+1~\text{if}~m\neq0.
$$
Here 
\begin{equation}
\label{eq:mupeq}
\mu = \frac{1}{2}(p^2-1),
\end{equation}
which is  index of  
of  $\widehat{\Gamma_1(p)}$ in $\widehat{\Gamma(1)}$. 
See \cite[Thm.4.2.5, p.106]{Miyake}.
Then the Valence formula in Theorem \thm{val} implies LHS=RHS and \eqn{Kpmid2} is proved.
\end{enumerate}

To aid with the calculations we include some propositions on cusps and 
orders at cusps.
From \cite[Corollary 4, p.930]{Ch-Ko-Pa} and \cite[Lemma 3, p.929]{Ch-Ko-Pa}
we have
\begin{prop}
\label{propo:cusps1}
Let $p>3$ be prime. Then we have the following set of inequivalent cusps $\mathcal{S}_p$ 
for $\Gamma_1(p)$ and their corresponding fan widths.
$$
\begin{matrix}
\mbox{Cusp:}\quad  & i\infty,\, & 0,\,  & \frac{1}{2},\, & \frac{1}{3},\, & \dots,\, & \frac{1}{\tfrac{1}{2}(p-1)},\, & \frac{2}{p},\, & \frac{3}{p},\, & \dots,\, & \frac{\tfrac{1}{2}(p-1)}{p},\\
\mbox{Fan width:}\quad & 1,\, &  p,\, & p,\, & p,\, & \dots,\, & p,\, & 1,\, & 1,\, & \dots,\, & 1.     
\end{matrix}
$$
\end{prop}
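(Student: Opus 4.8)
The plan is to establish the two assertions of the proposition separately: first that $\mathcal{S}_p$ is a complete set of representatives for the $\Gamma_1(p)$-inequivalent cusps, and then that the fan widths are as tabulated. For the enumeration I would invoke the standard description of the cusps of $\Gamma_1(N)$ used in \cite{Ch-Ko-Pa}: writing a cusp in lowest terms as $\frac{a}{c}$ with $\gcd(a,c)=1$, only the class of $(a,c)$ modulo $p$ matters, and two cusps $\frac{a}{c}$, $\frac{a'}{c'}$ are $\Gamma_1(p)$-equivalent precisely when
\[
\begin{pmatrix} a' \\ c' \end{pmatrix} \equiv \pm \begin{pmatrix} a + jc \\ c \end{pmatrix} \pmod{p}
\]
for some integer $j$. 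Thus a cusp is determined by $(a,c)\bmod p$ (with $(a,c)\not\equiv(0,0)$) under the simultaneous operations of adding a multiple of $c$ to $a$ and negating the whole vector.

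Next I would split the count according to whether $p\mid c$. When $c\equiv 0\pmod p$, adding multiples of $c$ does nothing, so the class is just $a\bmod p$ up to sign, giving the $\tfrac12(p-1)$ classes with $a\equiv 1,2,\dots,\tfrac12(p-1)$; these are exactly $i\infty=\tfrac10$ (the case $a\equiv 1$) together with $\tfrac{2}{p},\tfrac{3}{p},\dots,\tfrac{(p-1)/2}{p}$. When $c\not\equiv 0\pmod p$, the unit $c$ lets us normalize $a\equiv 0$, so the class is $c\bmod p$ up to sign, giving the $\tfrac12(p-1)$ classes with $c\equiv 1,2,\dots,\tfrac12(p-1)$, namely $0=\tfrac01$ (for $c\equiv 1$) together with $\tfrac12,\tfrac13,\dots,\tfrac{1}{(p-1)/2}$. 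The two families are disjoint and together give $p-1$ classes, matching the known cusp count $\tfrac12\sum_{d\mid p}\phi(d)\phi(p/d)=p-1$ for $\Gamma_1(p)$; this shows $\mathcal{S}_p$ is complete with pairwise inequivalent members.

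For the fan widths I would apply the conjugation test directly. Given $s=\frac{a}{c}$ choose $\sigma=\begin{pmatrix} a & b \\ c & d\end{pmatrix}\in\SL_2(\mathbb{Z})$ with $\sigma(\infty)=s$; the width is the least $h\ge 1$ with $\sigma T^h\sigma^{-1}\in\Gamma_1(p)$, where $T=\TMat$. A direct computation gives
\[
\sigma T^h \sigma^{-1} = \begin{pmatrix} 1 - ach & a^2 h \\ -c^2 h & 1 + ach\end{pmatrix},
\]
so membership in $\Gamma_1(p)$ forces $c^2 h\equiv 0$ and $ach\equiv 0\pmod p$ (the upper-right entry is unconstrained). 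If $p\mid c$ both hold already for $h=1$, so the width is $1$; this covers $i\infty$ (lower-left entry $0$) and the cusps $\tfrac{n}{p}$ (where $c=p$). If $p\nmid c$ then $c$ is a unit and we need $p\mid h$, forcing $h=p$; this covers $0$ and the $\tfrac1n$ with $2\le n\le\tfrac12(p-1)$. A short check that $\sigma T^h\sigma^{-1}\in-\Gamma_1(p)$ is never solvable (it would require $ach\equiv 2\equiv -2\pmod p$) confirms that the same widths arise whether one works with $\Gamma_1(p)$ or $\pm\Gamma_1(p)$, reconciling the computation with the valence-formula convention of \cite{Ra}.

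The part needing the most care will be the cusp-equivalence bookkeeping: pinning down the exact relation on $(a,c)\bmod p$, including the role of $-I$ (which lies outside $\Gamma_1(p)$ yet acts trivially on the extended upper half-plane), and verifying that the apparently \emph{missing} cusps such as $\tfrac1p$ or $\tfrac{(p+1)/2}{p}$ are genuinely equivalent to listed representatives rather than new classes. Once the equivalence relation is fixed, both the enumeration and the width computation are routine, and one recovers exactly the table quoted from \cite{Ch-Ko-Pa}.
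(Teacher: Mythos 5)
Your proposal is correct, and it differs from the paper in an essential way: the paper gives no argument at all for this proposition, simply importing both the cusp list and the fan widths from \cite[Lemma 3, p.929]{Ch-Ko-Pa} and \cite[Corollary 4, p.930]{Ch-Ko-Pa}, whereas you supply a self-contained derivation. Your route is the standard one: the equivalence criterion for cusps of $\Gamma_1(p)$ (namely $a/c \sim a'/c'$ iff $(a',c') \equiv \pm(a+jc,\,c) \pmod{p}$ for some $j$), followed by the two-case analysis according to whether $p \mid c$ (class determined by $\pm a \bmod p$; this family contains $i\infty$ and the cusps $n/p$, and absorbs $1/p$ into the class of $i\infty$) or $p \nmid c$ (normalize $a \equiv 0$ using that $c$ is a unit; class determined by $\pm c \bmod p$; this family contains $0$ and the cusps $1/n$), giving $p-1$ pairwise inequivalent classes in agreement with the known cusp count of $\Gamma_1(p)$. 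Your width computation is also correct: the conjugate $\sigma T^h \sigma^{-1}$ has lower-left entry $-c^2h$ and diagonal entries $1 \mp ach$, so membership in $\Gamma_1(p)$ holds at $h=1$ when $p \mid c$ and first at $h=p$ when $p \nmid c$; and your observation that landing in $-\Gamma_1(p)$ would force $ach \equiv 2 \equiv -2 \pmod{p}$, impossible for $p>3$, is precisely what reconciles the $\Gamma_1(p)$-width with the width of the projective group $\widehat{\Gamma_1(p)}$ that enters the Valence Formula. What the paper's citation buys is brevity; what your argument buys is a verifiable proof in which the delicate points — the role of $-I$ (outside $\Gamma_1(p)$ but acting trivially on cusps) and the fact that apparently missing cusps such as $1/p$ and $\tfrac{(p+1)/2}{p}$ are equivalent to listed representatives — are made explicit rather than left inside the reference.
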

From \cite[Prop.2.1, p.34]{Ko-book} and \cite[Lemma 3.2, p.285]{Bi89} we have
\begin{prop}
\label{propo:ford}
Let $1\le N\nmid \rho$, and $(a,c)=1$. Then
$$
\ord\Lpar{f_{N,\rho}(z),\frac{a}{c}} = 
\frac{g^2}{2N}\Lpar{\frac{a\rho}{g} - \FL{\frac{a\rho}{g}} - \frac{1}{2}}^2,
$$
and
$$
\ord\Lpar{\eta(N z),\frac{a}{c}} = 
\frac{g^2}{2N},
$$
where $g=(N,c)$.
\end{prop}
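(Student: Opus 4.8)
The plan is to read off each order as the leading $q$-exponent at the cusp after conjugating it to $i\infty$. Fix $\gamma=\begin{pmatrix}a & b\\ c & d\end{pmatrix}\in\SL_2(\mathbb{Z})$ with $\gamma(i\infty)=a/c$; then $\ord\Lpar{F,\tfrac ac}$ is the exponent of the lowest power of $q=\exp(2\pi i z)$ occurring in $\stroke{F}{\gamma}{k}$, so in each case it suffices to exhibit the leading term of $F(\gamma z)$ after stripping the automorphy factor.

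For $\eta(Nz)$ I would reduce to the classical transformation \eqn{etatrans}. The matrix $\begin{pmatrix}N&0\\0&1\end{pmatrix}\gamma=\begin{pmatrix}Na & Nb\\ c & d\end{pmatrix}$ has determinant $N$, and since $(a,c)=1$ we have $(Na,c)=(N,c)=g$; hence it factors as $\gamma'\begin{pmatrix}g & \beta\\ 0 & N/g\end{pmatrix}$ with $\gamma'\in\SL_2(\mathbb{Z})$ and $\beta\in\mathbb{Z}$. Thus $\eta(N\gamma z)=\eta\Lpar{\gamma'\,\tfrac{gz+\beta}{N/g}}$, and applying \eqn{etatrans} under $\gamma'$ reduces the problem to the leading term of $\eta\Lpar{\tfrac{gz+\beta}{N/g}}$, namely $\exp\Lpar{2\pi i z\,\tfrac{g^2}{24N}}$. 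This gives order $\tfrac{g^2}{24N}$ — the normalization in which $\eta$ itself has order $\tfrac1{24}$ at every cusp, consistent with the constant $s_p=\tfrac1{24}(p^2-1)$ used later.

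For $f_{N,\rho}$ I would exploit that it is a weight-$\tfrac12$ unary theta series. Applying the Jacobi triple product to \eqn{fdef} and completing the square gives $f_{N,\rho}(z)=\Zsum(-1)^n\exp\Lpar{2\pi i z\,\tfrac{(2Nn+2\rho-N)^2}{8N}}$, a theta series supported on the residue class $2\rho-N\pmod{2N}$ with an alternating character. Transforming this by $\gamma$ via the Shimura-type formula already used for the functions $\ttwid(k,M;z)$ in Section 2 again yields a unary theta series, now supported on the progression determined by $a\rho\bmod g$; its leading exponent is $\tfrac{g^2}{2N}\Lpar{\{a\rho/g\}-\tfrac12}^2$, which is the claimed value. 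As a cross-check, and more in the spirit of the paper, one may instead use \eqn{propf2}, $f_{N,\rho}=\eta(Nz)\,\eta_{N,\rho}(z)$, together with the generalized-Dedekind-eta order $\ord\Lpar{\eta_{N,\rho},\tfrac ac}=\tfrac{g^2}{2N}P_2(\{a\rho/g\})$; adding the two orders and using $P_2(x)+\tfrac1{12}=(x-\tfrac12)^2$ collapses $\tfrac{g^2}{24N}+\tfrac{g^2}{2N}P_2(\{a\rho/g\})$ to $\tfrac{g^2}{2N}\Lpar{\{a\rho/g\}-\tfrac12}^2$.

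The main obstacle is the transformation of $f_{N,\rho}$ (equivalently of $\eta_{N,\rho}$) under a general $\gamma$ in the ramified case $g=(N,c)>1$: one must show that, after the theta series rescales by $g$, the surviving arithmetic progression is governed by $\{a\rho/g\}$ rather than $\{a\rho/N\}$, and keep track of the root-of-unity multiplier carefully enough to be certain the leading coefficient is nonzero (so that the stated exponent really is the order and not merely a lower bound). Once this transformation law is established — which is exactly the content of the cited results of Biagioli \cite{Bi89} and K\"ohler \cite{Ko-book} — extracting the leading exponent and the Bernoulli-polynomial simplification are routine.
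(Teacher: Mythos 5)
Your proposal is correct, and in substance it rests on the same foundation as the paper: the paper gives no argument for Proposition \propo{ford} at all, simply quoting it from K\"ohler \cite[Prop.\ 2.1, p.\ 34]{Ko-book} and Biagioli \cite[Lemma 3.2, p.\ 285]{Bi89}. What you add is genuine content. The factorization $\begin{pmatrix}N&0\\0&1\end{pmatrix}\gamma=\gamma'\begin{pmatrix}g&\beta\\0&N/g\end{pmatrix}$ with $\gamma'\in\SL_2(\mathbb{Z})$ and $(Na,c)=(N,c)=g$ is a complete and correct proof of the $\eta(Nz)$ part. For $f_{N,\rho}$, your first route (triple product giving $f_{N,\rho}(z)=\sum_n(-1)^n q^{(2Nn+2\rho-N)^2/(8N)}$, then transforming the unary theta series) is exactly Biagioli's own method, and the step you flag as the obstacle --- identifying the surviving progression mod $g$ and the nonvanishing of the leading coefficient when $g>1$ --- is precisely what his Lemma 3.2 supplies, so you defer to the literature at exactly the point where the paper does; your second route via \eqn{propf2} and $P_2(x)+\tfrac{1}{12}=\Lpar{x-\tfrac12}^2$, assuming Robins' order formula for $\eta_{N,\rho}$, is a valid independent cross-check.

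One discrepancy must be flagged, and it is in your favor: you derive $\ord\Lpar{\eta(Nz),\frac{a}{c}}=\frac{g^2}{24N}$, while the proposition as printed asserts $\frac{g^2}{2N}$. The printed value is a misprint (off by the factor $12$). Indeed, at $i\infty$ (take $a/c=1/0$, so $g=N$) the function $\eta(Nz)=q^{N/24}\prod_{n\ge1}(1-q^{Nn})$ has order $N/24$, not $N/2$; and Proposition \propo{ncuspORDS}, which the paper deduces from Proposition \propo{ford}, is only consistent with $\frac{g^2}{24N}$: its term $\frac{1}{24}n_0$ in case (i) arises as $p\cdot\frac{1}{24p}\,n_0$ (fan width $p$, $g=1$), and the term $\frac{p}{24}n_0$ in case (ii) arises as $\frac{p^2}{24p}\,n_0$ (fan width $1$, $g=p$). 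So your derivation corrects the stated formula rather than contradicting it; you should make that reconciliation explicit instead of leaving it implicit in the phrase about consistency with $s_p=\tfrac{1}{24}(p^2-1)$.
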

\begin{remark}
We have corrected an error in the statement of \cite[Prop.6.13]{Ga19a}.
\end{remark}
The following proposition follows easily from \eqn{ORDdef} and 
Propositions \propo{cusps1} and \propo{ford}.
\begin{prop}
\label{propo:ncuspORDS}
Let $p>3$ be prime and suppose $s=\frac{a}{c}$ is one of the cusps listed
in Proposition \propo{cusps1} with $i\infty$ represented by $\frac{1}{p}$.
Then 
\begin{enumerate}
\item[(i)]
If $(c,p)=1$ then
$$
\ORD\left(j(p,\overrightarrow{n},z),s,\Gamma_1(p)\right)
= \frac{1}{24}\left( n_0 + 3 \sum_{j=1}^{(p-1)/2} n_j\right).
$$
\item[(ii)] 
If $c=p$ then
$$
\ORD\left(j(p,\overrightarrow{n},z),s,\Gamma_1(p)\right)
= \frac{p}{24}\left( n_0 + 12 \sum_{j=1}^{(p-1)/2} 
n_j \left( \frac{aj}{p} - \FL{\frac{aj}{p}} - \frac{1}{2}\right)^2\right).
$$
\end{enumerate}
\end{prop}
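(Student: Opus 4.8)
The plan is to evaluate $\ORD(j(p,\overrightarrow{n},z),s,\Gamma_1(p))$ straight from its definition \eqn{ORDdef}, namely as the product of the fan width $n(\Gamma_1(p);s)$ with the invariant order $\ord(j(p,\overrightarrow{n},z);s)$. The fan widths are supplied by Proposition \propo{cusps1} and the invariant orders of the building blocks by Proposition \propo{ford}, so the statement reduces to a short arithmetic combination once the correct case split is identified. First I would use that the invariant order is additive over products together with $j(p,\overrightarrow{n},z)=\eta(pz)^{n_0}\prod_{k=1}^{\frac{1}{2}(p-1)}f_{p,k}(z)^{n_k}$, giving
\[
\ord\Lpar{j(p,\overrightarrow{n},z);\tfrac{a}{c}}
= n_0\,\ord\Lpar{\eta(pz);\tfrac{a}{c}}
 + \sum_{k=1}^{\frac{1}{2}(p-1)} n_k\,\ord\Lpar{f_{p,k}(z);\tfrac{a}{c}}.
\]

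Next I would organize everything around $g=(p,c)$. Since $p$ is prime, $g\in\{1,p\}$, and by Proposition \propo{cusps1} the cusps with $(c,p)=1$ (namely $0,\tfrac12,\dots,\tfrac{1}{(p-1)/2}$) all have fan width $p$, while the cusps with $c=p$ (namely $\tfrac2p,\dots,\tfrac{(p-1)/2}{p}$ together with $i\infty$, represented by $\tfrac1p$) all have fan width $1$. Thus case (i) of the statement is exactly $g=1$ and case (ii) is $g=p$; this alignment of the case split with the shape of the cusp list is the one structural point, and it is the step I would set up most carefully.

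In case (i), $g=1$, so $\tfrac{ak}{g}=ak\in\mathbb{Z}$ and the fractional part in Proposition \propo{ford} collapses to $\tfrac{ak}{g}-\FL{\tfrac{ak}{g}}-\tfrac12=-\tfrac12$, whence $\ord(f_{p,k};\tfrac ac)=\tfrac{1}{2p}\cdot\tfrac14=\tfrac{1}{8p}$ while $\ord(\eta(pz);\tfrac ac)=\tfrac{1}{24p}$. Multiplying the additive order by the fan width $p$ gives $\tfrac{1}{24}\big(n_0+3\sum_j n_j\big)$, where the factor $3$ records the ratio $\tfrac{1/(8p)}{1/(24p)}=3$ of the two orders. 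In case (ii), $g=p$, the squared fractional parts survive; here $\ord(f_{p,k};\tfrac ac)=\tfrac p2(\tfrac{ak}{p}-\FL{\tfrac{ak}{p}}-\tfrac12)^2$ and $\ord(\eta(pz);\tfrac ac)=\tfrac{p}{24}$, and multiplying by fan width $1$ yields $\tfrac{p}{24}\big(n_0+12\sum_j n_j(\tfrac{aj}{p}-\FL{\tfrac{aj}{p}}-\tfrac12)^2\big)$, using $\tfrac p2=\tfrac{p}{24}\cdot 12$.

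The computation is therefore nearly immediate, and the main care goes into the normalization of the $\eta(pz)$ order rather than into any hard estimate. The $\eta$ formula in Proposition \propo{ford} must be applied in its weight-$\tfrac12$ normalization $\ord(\eta(Nz);\tfrac ac)=\tfrac{g^2}{24N}$, so that the expansion order $\tfrac{p}{24}$ of $\eta(pz)$ at $i\infty$ is recovered; the cleanest safeguard is to cross-check the resulting formulas against the Valence formula for $\eta(pz)$ on $\Gamma_1(p)$, where summing $\tfrac{1}{24}$ over the $\tfrac12(p-1)$ cusps with $g=1$ and $\tfrac{p}{24}$ over the $\tfrac12(p-1)$ cusps with $g=p$ returns the expected total $\tfrac{p^2-1}{48}$. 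Once that bookkeeping is fixed, both displayed identities fall out by direct substitution.
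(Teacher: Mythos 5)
Your proof is correct and is essentially the paper's own argument: the paper deduces this proposition directly from \eqn{ORDdef} together with Propositions \propo{cusps1} and \propo{ford} (fan width times the invariant order, split according to $g=(p,c)\in\{1,p\}$), exactly as you do. Your observation that the order of $\eta(Nz)$ must be taken as $\tfrac{g^2}{24N}$ rather than the $\tfrac{g^2}{2N}$ printed in Proposition \propo{ford} is a correct catch of a misprint (as your valence-formula cross-check $\tfrac{p^2-1}{48}$ confirms), and with that normalization both cases fall out as you computed.
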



\subsection{Rank mod 11 identities}
\label{subsec:mod11}

\subsubsection{\textbf{Identity for $\mathcal{K}_{11,0}$}}

Let the permutation $\pi_r$ and the generalized eta function $j(z)=j(p,\overrightarrow{n},z)$ be defined as in Definition \refdef{perm} and \refdef{geneta}. We follow the steps of the stated algorithm in the process of proving the following identity for $\mathcal{K}_{11,0}(\zeta_{11},z)$ :
\\
\begin{align}
\mathcal{K}_{11,0}(\zeta_{11},z)&=(q^{11};q^{11})_\infty \sum_{n=1}^\infty \Lpar{\sum_{k=0}^{10} N(k,11,11n-5)\,\zeta_{11}^k}q^n \label{eq:rank11id0} \\
\nonumber
\\
&=\sum_{r=1}^{5} c_{11,r}\,j(11,\pi_r(\overrightarrow{n_1}),z),
\nonumber
\end{align}
\\
where 
\begin{align*}
\hspace{-43mm}\overrightarrow{n_1}=(15,-4,-2,-3,-2,-2),
\end{align*}
and the coefficients are :\\
\begin{align*}
c_{11,1} &=-(\zeta_{11}^{9}+\zeta_{11}^{8}+2\,\zeta_{11}^{7}+\zeta_{11}^{6}+\zeta_{11}^{5}+2 \,\zeta_{11}^{4}+\zeta_{11}^{3}+\zeta_{11}^{2}+1),
\\
c_{11,2} &=4\,\zeta_{11}^{9}+\zeta_{11}^{8}+2\,\zeta_{11}^{7}+2\,\zeta_{11}^{6}+2\,\zeta_{11}^{5}+2 \,\zeta_{11}^{4}+\zeta_{11}^{3}+4\,\zeta_{11}^{2}+4,
\\
c_{11,3} &=-\zeta_{11}^{9}-2\,\zeta_{11}^{8}+\zeta_{11}^{7}-2\,\zeta_{11}^{6}-2\,\zeta_{11}^{5}+\zeta_{11}^{4}-2\,\zeta_{11}^{3}-\zeta_{11}^{2}-3,
\\
c_{11,4} &=2\,\zeta_{11}^{8}+2\,\zeta_{11}^{7}+2\,\zeta_{11}^{4}+2\,\zeta_{11}^{3}+3,
\\
c_{11,5} &=2\,\zeta_{11}^{9}+2\,\zeta_{11}^{8}+\zeta_{11}^{7}+\zeta_{11}^{4}+2\,\zeta_{11}^{3}+2 \,\zeta_{11}^{2}+1.\\
\end{align*}
We note that a different but similar identity for 
$\mathcal{K}_{11,0}(\zeta_{11},z)$ was found previously by the first
author \cite[Section 6.4]{Ga19a}. 

\begin{enumerate}
\item[Step 1] We check the conditions for modularity as in Theorem \thm{modular} for $j(11,\pi_r(\overrightarrow{n_1}),z), 1\leq r \leq 5$ involved in \eqn{rank11id0}. Here, $p=11, m=0, n_0=15$. By Theorem \thm{jtrans}, we only need to check modularity for $r=1$. With $\overrightarrow{n_1}=(15,-4,-2,-3,-2,-2)$, we easily see that 
$$n_0+\displaystyle \sum_{k=1}^{5}n_k=2,$$ 

$$n_0+3 \displaystyle \sum_{k=1}^{5}n_k = -24 \equiv 0\pmod {24},$$

$$\displaystyle \sum_{k=1}^{5}k^2 n_k = -143 \equiv 0 \pmod {11},$$
as required.
\\\\
Since $m=0$, we skip Step 2.
\\
\item[Step 3] Using Proposition \propo{ncuspORDS},
 we calculate the orders of each of the five functions $f$ at each cusp 
$s$ of 
$\Gamma_1(11)$.

\begin{center}
$\ORD(f,s,\Gamma_1(11))$\\
\begin{tabular}{c c c c c c c c c}\\
&&&& cusp $s$  \\
$f$ & $i\infty$ & $0$ & $1/n$ & $2/11$ & $3/11$ & $4/11$ & $5/11$  \\
$j(11,\pi_1(\overrightarrow{n_1}),z)$ &  $1$  & $-1$ & $-1$  & $2$    & $2$    &  $2$   &  $3$   \\
$j(11,\pi_2(\overrightarrow{n_1}),z)$ &  $2$  & $-1$ & $-1$  & $2$    & $3$    &  $2$   &  $1$   \\
$j(11,\pi_3(\overrightarrow{n_1}),z)$ &  $2$  & $-1$ & $-1$  & $3$    & $2$    &  $1$   &  $2$   \\
$j(11,\pi_4(\overrightarrow{n_1}),z)$ &  $2$  & $-1$ & $-1$  & $2$    & $1$    &  $3$   &  $2$   \\
$j(11,\pi_5(\overrightarrow{n_1}),z)$ &  $3$  & $-1$ & $-1$  & $1$    & $2$    &  $2$   &  $2$   
\end{tabular}
\end{center}
where $2\le n\le 5$.
\item[Step 4] Considering the LHS of equation \eqn{rank11id0}
we now calculate lower bounds $\lambda(11,0,s)$ of 
the  orders 
$\ORD\left(\mathcal{K}_{11,0}(\zeta_{11},z),s,\Gamma_1(11)\right)$ 
for the cusps $s$ of $\Gamma_1(11)$.
\begin{center}
\begin{tabular}{c c c c c}
cusp $s$ &   $\lambda(11,0,s)$\\
$i\infty$    &           \\
$0$          &   $-1$    \\
$1/n$        &   $0$     \\
$2/11$       &   $\CL{5/11}=1$  \\
$3/11$       &   $\CL{5/11}=1$  \\
$4/11$       &   $\CL{5/11}=1$  \\
$5/11$       &   $\CL{5/11}=1$  \\
\end{tabular} 
\end{center} 
where $2\le n\le 5$, using Theorem \thm{ordlb}. 
Again we note that each value is an integer.
\item[Step 5] 
We summarize the calculations in Steps 4 and 5 in a Table.
The gives lower bounds for the LHS and RHS of equation \eqn{rank11id0}
at the cusps $s$.
\begin{center}
\begin{tabular}{c c c c c}
cusp $\zeta$ &  $\ORD(LHS;\zeta)$ & $\ORD(RHS;\zeta)$ 
& $\ORD(LHS-RHS;\zeta)$ \\
$i\infty$     &            &            &             \\
$0$          &  $\ge-1$   &  $\ge-1$   &   $\ge -1$  \\
$1/n$        &  $\ge0$ &  $\ge-1$   &   $\ge -1$  \\
$2/11$        &  $\ge1$    &  $1$       &   $\ge1$    \\ 
$3/11$        &  $\ge1$    &  $1$       &   $\ge 1$   \\ 
$4/11$        &  $\ge1$    &  $1$       &   $\ge 1$   \\ 
$5/11$        &  $\ge1$    &  $1$       &   $\ge 1$   \\ 
\end{tabular} 
\end{center} 
where $2\le n\le 5$. The constant $B$ (in equation \eqn{BLB1}) is
the sum of the lower bounds in the last column, so that $B=-1$.
\item[Step 6] The LHS and RHS are weakly holomorphic modular forms of 
weight $1$ on $\Gamma_1(11)$. So in the Valence Formula, 
$\frac{\mu k}{12} = 5$. 
The result follows provided we can show that 
$\ORD(LHS-RHS,i\infty,\Gamma_1(11))\geq 7$. 
This is easily verified using MAPLE.
\end{enumerate}

\subsubsection{\textbf{A quadratic residue case}}

We follow the steps of the stated algorithm in the process of proving the following identity for $\mathcal{K}_{11,1}(\zeta_{11},z)$ :
\\
\begin{align}
\mathcal{K}_{11,1}(\zeta_{11},z)&=q^\frac{1}{11}(q^{11};q^{11})_\infty \Bigg(\sum_{n=1}^\infty \Lpar{\sum_{k=0}^{10} N(k,11,11n-4)\,\zeta_{11}^k}q^n \label{eq:rank11id1} \\
&+({\zeta_{11}}+{\zeta_{11}}^{10}-{\zeta_{11}}^{9}-{\zeta_{11}}^{2})\,q^{-1} \Phi_{11,5}(q)\Big)
\nonumber
\\
&=\frac{f_{11,5}(z)}{f_{11,1}(z)}\sum_{r=1}^{5} c_{11,r}\,j(11,\pi_r(\overrightarrow{n_1}),z)+\frac{f_{11,4}(z)}{f_{11,5}(z)}\sum_{r=1}^{5} d_{11,r}\,j(11,\pi_r(\overrightarrow{n_1}),z),
\nonumber
\end{align}
\\
where 
\begin{align*}
\hspace{-55mm}\overrightarrow{n_1}=(15,-4,-2,-3,-2,-2),
\end{align*}
and the coefficients are :
\begin{align*}
c_{11,1} &=0,
\\
c_{11,2} &=5\, \zeta_{11}^{9}+\zeta_{11}^{8}+4\, \zeta_{11}^{7}+2\, \zeta_{11}^{6}+2\, \zeta_{11}^{5}+4\, \zeta_{11}^{4}+\zeta_{11}^{3}+5\, \zeta_{11}^{2}+5,
\\
c_{11,3} &=-(5 \zeta_{11}^{9}+3\, \zeta_{11}^{7}+2\, \zeta_{11}^{6}+2\, \zeta_{11}^{5}+3\, \zeta_{11}^{4}+5\, \zeta_{11}^{2}+1),
\\
c_{11,4} &=\zeta_{11}^{9}-\zeta_{11}^{8}-\zeta_{11}^{7}-\zeta_{11}^{4}-\zeta_{11}^{3}++\zeta_{11}^{2}-2,
\\
c_{11,5} &=-(6\, \zeta_{11}^{9}+2\, \zeta_{11}^{8}+3\, \zeta_{11}^{7}+5\, \zeta_{11}^{6}+5\, \zeta_{11}^{5}+3\, \zeta_{11}^{4}+2\, \zeta_{11}^{3}+6\, \zeta_{11}^{2}+5),
\\
d_{11,1} &=0,
\\
d_{11,2} &=0,
\\
d_{11,3} &=\zeta_{11}^{9}+\zeta_{11}^{8}+\zeta_{11}^{6}+\zeta_{11}^{5}+\zeta_{11}^{3}+\zeta_{11}^{2}+1,
\\
d_{11,4} &=0.
\\
d_{11,5} &=-(2\, \zeta_{11}^{9}+\zeta_{11}^{8}+\zeta_{11}^{7}+\zeta_{11}^{6}+\zeta_{11}^{5}+\zeta_{11}^{4}+\zeta_{11}^{3}+2\, \zeta_{11}^{2}+1),
\\
\end{align*}

\begin{enumerate}
\item[Step 1] We check the conditions for modularity as in Theorem \thm{modular} for $\frac{f_{11,5}(z)}{f_{11,1}(z)}j(11,\pi_r(\overrightarrow{n_1}),z)$ and $\frac{f_{11,4}(z)}{f_{11,5}(z)}j(11,\pi_r(\overrightarrow{n_1}),z), 1\leq r \leq 5$ involved in \eqn{rank11id1}. Here, $p=11, m=1, n_0=15$. 
\\
\begin{center}
\begin{tabular}{c c c c c c c c}
generalized eta-functions & $n_1$ & $n_2$ & $n_3$ & $n_4$ & $n_5$ &  $\displaystyle \sum_{k=1}^{5}k^2 n_k$.  \\\\
$\frac{f_{11,5}(z)}{f_{11,1}(z)}j(11,\pi_2(\overrightarrow{n_1}),z)$   & $-3$   &  $-4$   &  $-2$  &  $-2$  &  $-2$  &  $-119$ \\
$\frac{f_{11,5}(z)}{f_{11,1}(z)}j(11,\pi_3(\overrightarrow{n_1}),z)$   & $-3$   &  $-3$   &  $-4$  &  $-2$  &  $-1$  &  $-108$ \\
$\frac{f_{11,5}(z)}{f_{11,1}(z)}j(11,\pi_4(\overrightarrow{n_1}),z)$   & $-4$   &  $-2$   &  $-2$  &  $-4$  &  $-1$  &  $-119$ \\
$\frac{f_{11,5}(z)}{f_{11,1}(z)}j(11,\pi_5(\overrightarrow{n_1}),z)$   & $-3$   &  $-2$   &  $-2$  &  $-3$  &  $-3$  &  $-152$ \\ 
$\frac{f_{11,4}(z)}{f_{11,5}(z)}j(11,\pi_1(\overrightarrow{n_1}),z)$   & $-2$  &  $-3$   &  $-4$  &  $-1$  &  $-3$   &  $-141$ \\ 
$\frac{f_{11,4}(z)}{f_{11,5}(z)}j(11,\pi_2(\overrightarrow{n_1}),z)$   & $-2$  &  $-2$   &  $-2$  &  $-2$  &  $-5$   &  $-185$\\\\
\end{tabular} 
\end{center} 
For each of the generalized eta-functions, we can see that $\displaystyle \sum_{k=1}^{5}k^2 n_k\equiv 2 \pmod {11}, \displaystyle \sum_{k=1}^{5}n_k=-13$. Thus, $n_0+\displaystyle \sum_{k=1}^{5}n_k=2$, and, $n_0+3 \displaystyle \sum_{k=1}^{5}n_k = -24$.
\item[Step 2] Next, we calculate the orders of the generalized eta-functions at $i\infty$ and considering the identity with zero coefficients removed, we find that $k_0=1$. Thus we divide each generalized eta-function by $j_0=\frac{f_{11,5}(z)}{f_{11,1}(z)}j(11,\pi_2(\overrightarrow{n_1}),z)$, which has the lowest order at $i\infty$. 
\item[Step 3] Using Proposition \propo{ncuspORDS},
 we calculate the orders of each of the six functions $f$ at each cusp 
$s$ of 
$\Gamma_1(11)$.
\begin{center}
$\ORD(f,s,\Gamma_1(11))$\\
\begin{tabular}{c c c c c c c c c}\\
 &&&& cusp $s$  \\
$f$ & $i\infty$ & $0$ & $1/n$ & $2/11$ & $3/11$ & $4/11$ & $5/11$  \\
$\frac{f_{11,5}(z)}{f_{11,1}(z)}j(11,\pi_2(\overrightarrow{n_1}),z)/j_0$ &  $1$  & $0$ & $0$   & $0$    & $1$    &  $0$   &  $-2$   \\
$\frac{f_{11,5}(z)}{f_{11,1}(z)}j(11,\pi_3(\overrightarrow{n_1}),z)/j_0$ &  $1$  & $0$ & $0$   & $1$    & $0$    &  $-1$   &  $-1$   \\
$\frac{f_{11,5}(z)}{f_{11,1}(z)}j(11,\pi_4(\overrightarrow{n_1}),z)/j_0$ &  $1$  & $0$ & $0$   & $0$    & $-1$   &  $1$   &  $-1$   \\
$\frac{f_{11,5}(z)}{f_{11,1}(z)}j(11,\pi_5(\overrightarrow{n_1}),z)/j_0$ &  $2$  & $0$ & $0$   & $-1$   & $0$    &  $0$   &  $-1$   \\
$\frac{f_{11,4}(z)}{f_{11,5}(z)}j(11,\pi_1(\overrightarrow{n_1}),z)/j_0$ &  $2$  & $0$ & $0$   & $0$    & $1$    &  $-2$   &  $-1$   \\
$\frac{f_{11,4}(z)}{f_{11,5}(z)}j(11,\pi_2(\overrightarrow{n_1}),z)/j_0$ &  $3$  & $0$ & $0$   & $-2$    & $1$   &  $-1$   &  $-1$               
\end{tabular}
\end{center}
where $2\le n\le 5$.
\item[Step 4] Considering the LHS of equation \eqn{rank11id1}
after division by $j_0$, 
we now calculate lower bounds $\lambda(11,1,s)$ of 
the  orders 
$\ORD\left(\frac{\mathcal{K}_{11,1}(\zeta_{11},z)}{j_0},\zeta,\Gamma_1(11)\right)$
for the cusps $s$ of $\Gamma_1(11)$.
\begin{center}
\begin{tabular}{c c c c c}
cusp $s$ & $\lambda(11,1,\zeta)$     \\
$i\infty$    &                     \\
$0$          &   $0$               \\
$1/n$        &   $1$               \\
$2/11$       &   $-2$  \\
$3/11$       &   $\CL{-16/11}=-1$  \\
$4/11$       &   $\CL{-23/11}=-2$  \\
$5/11$       &   $\CL{-32/11}-2$ 
\end{tabular} 
\end{center} 
where $2\le n\le 5$, using Theorem \thm{ordlb}. 
Again we note that each value is an integer.
\item[Step 5] 
We summarize the calculations in Steps 4 and 5 in a Table.
The gives lower bounds for the LHS and RHS of equation \eqn{rank11id1}
after division by $j_0$,
at the cusps $s$.
\begin{center}
\begin{tabular}{c c c c c}
cusp $s$ & $\ORD(LHS;s)$ & $\ORD(RHS;s)$ 
& $\ORD(LHS-RHS;s)$ \\
$i\infty$   &            &          &            \\
$0$        &  $\ge0$    &  $\ge0$  &   $\ge 0$  \\
$1/n$      &  $\ge1$ &   $\ge0$  &   $\ge 0$  \\
$2/11$      &  $\ge-2$   &  $-2$    &   $\ge -2$ \\ 
$3/11$      &  $\ge-1$   &  $-1$    &   $\ge -1$ \\ 
$4/11$      &  $\ge-2$   &  $-2$    &   $\ge -2$  \\ 
$5/11$      &  $\ge-2$   &  $-2$    &   $\ge -2$ 
\end{tabular} 
\end{center} 
where $2\le n\le 5$. The constant $B$ (in equation \eqn{BLB1}) is
the sum of the lower bounds in the last column, so that $B=-7$.
\item[Step 6] This time the LHS and RHS are weakly holomorphic modular 
forms of weight $0$ on $\Gamma_1(11)$. 
So in the Valence Formula, $\frac{\mu k}{12} = 0$. 
The result follows provided we can show that 
$\ORD(LHS-RHS,i\infty,\Gamma_1(11))\geq 8$. 
This is easily verified using MAPLE.
\end{enumerate}

\subsubsection{\textbf{A quadratic non-residue case}}
We follow the steps of the stated algorithm in the process of proving the following identity for $\mathcal{K}_{11,2}(\zeta_{11},z)$ :
\\
\begin{align}
\mathcal{K}_{11,2}(\zeta_{11},z)&=q^\frac{2}{11}(q^{11};q^{11})_\infty \sum_{n=1}^\infty \Lpar{\sum_{k=0}^{10} N(k,11,11n-3)\,\zeta_{11}^k}q^n \label{eq:rank11id2} \\
\nonumber
\\
&=\frac{f_{11,4}(z)}{f_{11,1}(z)}\sum_{r=1}^{5} c_{11,r}\,j(11,\pi_r(\overrightarrow{n_1}),z)+\frac{f_{11,3}(z)}{f_{11,4}(z)}\sum_{r=1}^{5} d_{11,r}\,j(11,\pi_r(\overrightarrow{n_1}),z),
\nonumber
\end{align}
\\
where 
\begin{align*}
\hspace{-46mm}\overrightarrow{n_1}=(15,-4,-2,-3,-2,-2),
\end{align*}
and the coefficients are :
\begin{align*}
c_{11,1} &=0,
\\
c_{11,2} &=-(4\,\zeta_{11}^{9}+2\, \zeta_{11}^{8}+3\, \zeta_{11}^{7}+3 \,\zeta_{11}^{6}+3\, \zeta_{11}^{5}+3\, \zeta_{11}^{4}+2\, \zeta_{11}^{3}+4\, \zeta_{11}^{2}+6),
\\
c_{11,3} &=-2\, \zeta_{11}^{8}+\zeta_{11}^{7}-\zeta_{11}^{6}-\zeta_{11}^{5}+\zeta_{11}^{4}-2\, \zeta_{11}^{3}-1,
\\
c_{11,4} &=3\, \zeta_{11}^{9}+3\, \zeta_{11}^{8}+\zeta_{11}^{7}+2\, \zeta_{11}^{6}+2\, \zeta_{11}^{5}+\zeta_{11}^{4}+3\, \zeta_{11}^{3}+3\, \zeta_{11}^{2}+6,
\\
c_{11,5} &=-(\zeta_{11}^{9}+\zeta_{11}^{8}+\zeta_{11}^{7}+\zeta_{11}^{4}+\zeta_{11}^{3}+\zeta_{11}^{2}),
\\
d_{11,1} &=0,
\\
d_{11,2} &=\zeta_{11}^{9}+\zeta_{11}^{7}+\zeta_{11}^{4}+\zeta_{11}^{2}+1,
\\
d_{11,3} &=0,
\\
d_{11,4} &=-(\zeta_{11}^8 + \zeta_{11}^7 + \zeta_{11}^4 + \zeta_{11}^3 + 2),
\\
d_{11,5} &=0.\\
\end{align*}

\begin{enumerate}
\item[Step 1] We check the conditions for modularity as in Theorem \thm{modular} for $\frac{f_{11,4}(z)}{f_{11,1}(z)}j(11,\pi_r(\overrightarrow{n_1}),z)$ and $\frac{f_{11,3}(z)}{f_{11,4}(z)}j(11,\pi_r(\overrightarrow{n_1}),z), 1\leq r \leq 5$ involved in \eqn{rank11id2}. Here, $p=11, m=2, n_0=15$. 
\\
\begin{center}
\begin{tabular}{c c c c c c c c}
generalized eta-functions & $n_1$ & $n_2$ & $n_3$ & $n_4$ & $n_5$ &  $\displaystyle \sum_{k=1}^{5}k^2 n_k$.  \\\\
$\frac{f_{11,4}(z)}{f_{11,1}(z)}j(11,\pi_2(\overrightarrow{n_1}),z)$   & $-3$   &  $-4$   &  $-2$  &  $-1$  &  $-3$  &  $-128$ \\
$\frac{f_{11,4}(z)}{f_{11,1}(z)}j(11,\pi_3(\overrightarrow{n_1}),z)$   & $-3$   &  $-3$   &  $-4$  &  $-1$  &  $-2$  &  $-117$ \\
$\frac{f_{11,4}(z)}{f_{11,1}(z)}j(11,\pi_4(\overrightarrow{n_1}),z)$   & $-4$   &  $-2$   &  $-2$  &  $-3$  &  $-2$  &  $-128$ \\
$\frac{f_{11,4}(z)}{f_{11,1}(z)}j(11,\pi_5(\overrightarrow{n_1}),z)$   & $-3$   &  $-2$   &  $-2$  &  $-2$  &  $-4$  &  $-161$ \\ 
$\frac{f_{11,3}(z)}{f_{11,4}(z)}j(11,\pi_1(\overrightarrow{n_1}),z)$   & $-2$  &  $-4$   &  $-1$  &  $-3$  &  $-3$   &  $-150$ \\ 
$\frac{f_{11,3}(z)}{f_{11,4}(z)}j(11,\pi_2(\overrightarrow{n_1}),z)$   & $-3$  &  $-2$   &  $-1$  &  $-5$  &  $-2$   &  $-150$\\\\
\end{tabular} 
\end{center} 
For each of the generalized eta-functions, we can see that $\displaystyle \sum_{k=1}^{5}k^2 n_k\equiv 4 \pmod {11}, \displaystyle \sum_{k=1}^{5}n_k=-13$. Thus, $n_0+\displaystyle \sum_{k=1}^{5}n_k=2$, and, $n_0+3 \displaystyle \sum_{k=1}^{5}n_k = -24$.
\item[Step 2] Next, we calculate the orders of the generalized eta-functions at $i\infty$ and considering the identity with zero coefficients removed, we find that $k_0=1$. Thus we divide each generalized eta-function by $j_0=\frac{f_{11,4}(z)}{f_{11,1}(z)}j(11,\pi_2(\overrightarrow{n_1}),z)$, which has the lowest order at $i\infty$. 
\item[Step 3] Using Proposition \propo{ncuspORDS},
 we calculate the orders of each of the six functions $f$ at each cusp 
$s$ of $\Gamma_1(11)$.
\begin{center}
$\ORD(f ,s,\Gamma_1(11))$\\
\begin{tabular}{c c c c c c c c c}\\
&&&& cusp $s$  \\
$f$ & $i\infty$ & $0$ & $1/n$ & $2/11$ & $3/11$ & $4/11$ & $5/11$  \\
$\frac{f_{11,4}(z)}{f_{11,1}(z)}j(11,\pi_2(\overrightarrow{n_1}),z)/j_0$ &  $1$  & $0$ & $0$   & $0$    & $1$    &  $0$   &  $-2$   \\
$\frac{f_{11,4}(z)}{f_{11,1}(z)}j(11,\pi_3(\overrightarrow{n_1}),z)/j_0$ &  $1$  & $0$ & $0$   & $1$    & $0$    &  $1$   &  $-1$   \\
$\frac{f_{11,4}(z)}{f_{11,1}(z)}j(11,\pi_4(\overrightarrow{n_1}),z)/j_0$ &  $1$  & $0$ & $0$   & $0$    & $-1$   &  $1$   &  $-1$   \\
$\frac{f_{11,4}(z)}{f_{11,1}(z)}j(11,\pi_5(\overrightarrow{n_1}),z)/j_0$ &  $2$  & $0$ & $0$   & $-1$   & $0$    &  $0$   &  $-1$   \\
$\frac{f_{11,3}(z)}{f_{11,4}(z)}j(11,\pi_1(\overrightarrow{n_1}),z)/j_0$ &  $2$  & $0$ & $0$   & $0$    & $0$    &  $1$   &  $-3$   \\
$\frac{f_{11,3}(z)}{f_{11,4}(z)}j(11,\pi_2(\overrightarrow{n_1}),z)/j_0$ &  $2$  & $0$ & $0$   & $0$    & $-2$   &  $2$   &  $-2$               
\end{tabular}
\end{center}
where $2\le n\le 5$.
\item[Step 4] Considering the LHS of equation \eqn{rank11id2}
after division by $j_0$, 
we now calculate lower bounds $\lambda(11,2,s)$ of 
the  orders 
$\ORD\left(\frac{\mathcal{K}_{11,2}(\zeta_{11},z)}{j_0},s,\Gamma_1(11)\right)$
for the cusps $s$ of $\Gamma_1(11)$.
\begin{center}
\begin{tabular}{c c c c c}
cusp $s$ & $\lambda(11,2,\zeta)$     \\
$i\infty$    &                     \\
$0$          &   $0$               \\
$1/n$        &   $1$               \\
$2/11$       &   $\CL{-14/11}=-1$  \\
$3/11$       &   $\CL{-24/11}=-2$  \\
$4/11$       &   $\CL{-16/11}=-1$  \\
$5/11$       &   $\CL{-34/11}=-3$  \\
\end{tabular} 
\end{center} 
where $2\le n\le 5$, using Theorem \thm{ordlb}. 
Again we note that each value is an integer.
\item[Step 5] 
We summarize the calculations in Steps 4 and 5 in a Table.
The gives lower bounds for the LHS and RHS of equation \eqn{rank11id2}
after division by $j_0$,
at the cusps $s$.
\begin{center}
\begin{tabular}{c c c c c}
cusp $s$ & $n(\Gamma_1(11);s)$ & $\ORD(LHS;s)$ & $\ORD(RHS;s)$ 
& $\ORD(LHS-RHS;s)$ \\
$i\infty$    & $1$  &            &          &            \\
$0$          & $11$ &  $\ge0$    &  $\ge0$  &   $\ge 0$  \\
$1/n$        & $11$ &  $\ge1$ &   $\ge0$  &   $\ge 0$  \\
$2/11$       & $1$  &  $\ge-1$   &  $-1$    &   $\ge -1$ \\ 
$3/11$       & $1$  &  $\ge-2$   &  $-2$    &   $\ge -2$ \\ 
$4/11$       & $1$  &  $\ge-1$   &  $-1$    &   $\ge -1$  \\ 
$5/11$       & $1$  &  $\ge-3$   &  $-3$    &   $\ge -3$ \\ 
\end{tabular} 
\end{center} 
where $2\le n\le 5$. The constant $B$ (in equation \eqn{BLB1}) is
the sum of the lower bounds in the last column, so that $B=-7$.
\item[Step 6] As in the previous case, the LHS and RHS are weakly holomorphic 
modular 
forms of weight $0$ on $\Gamma_1(11)$. 
So in the Valence Formula, $\frac{\mu k}{12} = 0$. 
The result follows provided we can show that 
$\ORD(LHS-RHS,i\infty,\Gamma_1(11))\geq 8$. 
This is easily verified using MAPLE.
\end{enumerate}


\subsection{Rank mod 13 identities}
\label{subsec:mod13}

\subsubsection{\textbf{Identity for $\mathcal{K}_{13,0}$}}

Let the permutation $\pi_r$ and the generalized eta function $j(z)=j(p,\overrightarrow{n},z)$ be defined as in Definition \refdef{perm} and \refdef{geneta}. The following is an identity for $\mathcal{K}_{13,0}(\zeta_{13},z)$ in terms of generalized eta-functions :
\\
\begin{align}
\mathcal{K}_{13,0}(\zeta_{13},z)&=(q^{13};q^{13})_\infty \sum_{n=1}^\infty \Lpar{\sum_{k=0}^{12} N(k,13,13n-7)\,\zeta_{13}^k}q^n \label{eq:rank13id1} \\
\nonumber
\\
&=\sum_{k=0}^{1}\sum_{r=1}^{6} \left(\dfrac{\eta(13z)}{\eta(z)}\right)^{2k}c_{13,r,k}\,j(13,\pi_r(\overrightarrow{n_1}),z),
\nonumber
\end{align}
\\
where 
\begin{align*}
\overrightarrow{n_1}=(15,-2,-3,-2,-1,-3,-2),
\end{align*}
and the coefficients are :

\begin{align*}
c_{13,1,0} &=  3\, \zeta_{11}^{11}+3\, \zeta_{11}^{10}+5\, \zeta_{11}^{8}+\zeta_{11}^{7}+\zeta_{11}^{6}+5\, \zeta_{11}^{5}+3\, \zeta_{11}^{3}+3\, \zeta_{11}^{2}+5,
\\
c_{13,2,0} &= \zeta_{11}^{11}-3\, \zeta_{11}^{10}-\zeta_{11}^{9}+2\, \zeta_{11}^{8}-2\, \zeta_{11}^{7}-2\, \zeta_{11}^{6}+2\, \zeta_{11}^{5}-\zeta_{11}^{4}-3\, \zeta_{11}^{3}+\zeta_{11}^{2}-2,
\\
c_{13,3,0} &= 5\, \zeta_{11}^{11}+\zeta_{11}^{10}+5\, \zeta_{11}^{9}+2\, \zeta_{11}^{8}+3\, \zeta_{11}^{7}+3\, \zeta_{11}^{6}+2\, \zeta_{11}^{5}+5\, \zeta_{11}^{4}+\zeta_{11}^{3}+5\, \zeta_{11}^{2}+6,
\\
c_{13,4,0} &= \zeta_{11}^{11}+2\, \zeta_{11}^{10}+2\, \zeta_{11}^{9}+2\, \zeta_{11}^{4}+2\, \zeta_{11}^{3}+\zeta_{11}^{2}-1,
\\
c_{13,5,0} &= -(\zeta_{11}^{11}+\zeta_{11}^{10}+2\, \zeta_{11}^{9}+\zeta_{11}^{8}+2\, \zeta_{11}^{7}+2\, \zeta_{11}^{6}+\zeta_{11}^{5}+2\, \zeta_{11}^{4}+\zeta_{11}^{3}+\zeta_{11}^{2}+1),
\\
c_{13,6,0} &= -\zeta_{11}^{11}+2\, \zeta_{11}^{9}+2\, \zeta_{11}^{8}-\zeta_{11}^{7}-\zeta_{11}^{6}+2\, \zeta_{11}^{5}+2\, \zeta_{11}^{4}-\zeta_{11}^{2}+3,
\\
c_{13,1,1} &=  13\, (\zeta_{11}^{10}-\zeta_{11}^{9}+ \zeta_{11}^{8}+ \zeta_{11}^{5}-\zeta_{11}^{4}+\zeta_{11}^{3}+1),
\\
c_{13,2,1} &=  -13\, \left(\zeta_{11}^{10}+\zeta_{11}^{9}+\zeta_{11}^{7}+\zeta_{11}^{6}+\zeta_{11}^{4}+\zeta_{11}^{3}+2\right),
\\
c_{13,3,1} &= 13\, \left(2\, \zeta_{11}^{11}+\zeta_{11}^{9}+\zeta_{11}^{8}+\zeta_{11}^{7}+\zeta_{11}^{6}+\zeta_{11}^{5}+\zeta_{11}^{4}+2 \zeta_{11}^{2}+2\right),
\\
c_{13,4,1} &= 13\, \left(\zeta_{11}^{11}+\zeta_{11}^{10}+\zeta_{11}^{9}+\zeta_{11}^{8}+\zeta_{11}^{5}+\zeta_{11}^{4}+\zeta_{11}^{3}+\zeta_{11}^{2}+1\right),
\\
c_{13,5,1} &= -13\, \left(\zeta_{11}^{8}+\zeta_{11}^{5}\right),
\\
c_{13,6,1} &= -13\, (\zeta_{11}^{11} + \zeta_{11}^{10} + \zeta_{11}^7 + \zeta_{11}^6 + \zeta_{11}^3 + \zeta_{11}^2).
\end{align*}
We note that a different but similar identity for 
$\mathcal{K}_{13,0}(\zeta_{13},z)$ was found previously by the first
author \cite[Section 6.5]{Ga19a}. 

\subsubsection{\textbf{A quadratic residue case}}

The following is an identity for $\mathcal{K}_{13,2}(\zeta_{13},z)$ in terms of generalized eta-functions : 

\begin{align}
\mathcal{K}_{13,2}(\zeta_{13},z)&=q^\frac{2}{13}(q^{13};q^{13})_\infty \Bigg(\sum_{n=1}^\infty \Lpar{\sum_{k=0}^{12} N(k,13,13n-5)\,\zeta_{13}^k}q^n \label{eq:rank13id2} \\
&-(\zeta_{13}^{7}+\zeta_{13}^{6}-\zeta_{13}^5-\zeta_{13}^8)\,q^{0} \Phi_{13,4}(q)\Bigg)
\nonumber
\\
&=\frac{f_{13,1}(z)}{f_{13,6}(z)}\sum_{k=-1}^{1}\sum_{r=1}^{6} \left(\dfrac{\eta(13z)}{\eta(z)}\right)^{2k}c_{13,r,k}\,j(13,\pi_r(\overrightarrow{n_1}),z),
\nonumber
\end{align}
where 
\begin{align*}
\overrightarrow{n_1}=(15,-2,-3,-2,-1,-3,-2),
\end{align*}
and the coefficients are :

\begin{align*}
c_{13,1,-1} &= \zeta_{13}^{10}-3\, \zeta_{13}^{9}+\zeta_{13}^{8}-2\, \zeta_{13}^{7}-2\, \zeta_{13}^{6}+\zeta_{13}^{5}-3\, \zeta_{13}^{4}+\zeta_{13}^{3}+1,
\\
c_{13,2,-1} &= -(\zeta_{13}^{11}+\zeta_{13}^{10}+2\, \zeta_{13}^{9}+\zeta_{13}^{8}+3\, \zeta_{13}^{7}+3 \,\zeta_{13}^{6}+\zeta_{13}^{5}+2\, \zeta_{13}^{4}+\zeta_{13}^{3}+\zeta_{13}^{2}+1),
\\
c_{13,3,-1} &= -(\zeta_{13}^{11}+\zeta_{13}^{9}+\zeta_{13}^{8}+\zeta_{13}^{5}+\zeta_{13}^{4}+\zeta_{13}^{2}+1),
\\
c_{13,4,-1} &= -\zeta_{13}^{10}-\zeta_{13}^{8}+\zeta_{13}^{7}+\zeta_{13}^{6}-\zeta_{13}^{5}-\zeta_{13}^{3},
\\
c_{13,5,-1} &= -\zeta_{13}^{11}-5\, \zeta_{13}^{10}-7\, \zeta_{13}^{9}-9\, \zeta_{13}^{8}-12\, \zeta_{13}^{7}-12\, \zeta_{13}^{6}-9\, \zeta_{13}^{5}-7\, \zeta_{13}^{4}-5\, \zeta_{13}^{3}-\zeta_{13}^{2}+2,
\\
c_{13,6,-1} &= 4\, \zeta_{13}^{11}+2\, \zeta_{13}^{10}-5\, \zeta_{13}^{9}-3\, \zeta_{13}^{8}+2\, \zeta_{13}^{7}+2\, \zeta_{13}^{6}-3\, \zeta_{13}^{5}-5 \zeta_{13}^{4}+2\, \zeta_{13}^{3}+4\, \zeta_{13}^{2}-3,
\\
c_{13,1,0} &= 0,
\\
c_{13,2,0} &= 0,
\\
c_{13,3,0} &= 0,
\\
c_{13,4,0} &= 0,
\\
c_{13,5,0} &= -(\zeta_{13}^{11}+\zeta_{13}^{10}+2\, \zeta_{13}^{9}+2\, \zeta_{13}^{8}+2\, \zeta_{13}^{7}+2\, \zeta_{13}^{6}+2\, \zeta_{13}^{5}+2\, \zeta_{13}^{4}+\zeta_{13}^{3}+\zeta_{13}^{2}+1),
\\
c_{13,6,0} &= -\zeta_{13}^{8}+\zeta_{13}^{7}+\zeta_{13}^{6}-\zeta_{13}^{5},
\\
c_{13,1,1} &= 13\, (\zeta_{13}^{11}+\zeta_{13}^{10}+\zeta_{13}^{8}+\zeta_{13}^{5}+\zeta_{13}^{3}+\zeta_{13}^{2}+2),
\\
c_{13,2,1} &= -13\, (\zeta_{13}^{10}+\zeta_{13}^{7}+\zeta_{13}^{6}+\zeta_{13}^{3}),
\\
c_{13,3,1} &= -13\, (\zeta_{13}^{11}+\zeta_{13}^{9}+\zeta_{13}^{7}+\zeta_{13}^{6}+\zeta_{13}^{4}+\zeta_{13}^{2}+1),
\\
c_{13,4,1} &= -13\, (\zeta_{13}^{9}+\zeta_{13}^{4}),
\\
c_{13,5,1} &= -13\, (\zeta_{13}^{11}+\zeta_{13}^{10}+\zeta_{13}^{9}+2\zeta_{13}^{8}+2\zeta_{13}^{7}+2\zeta_{13}^{6}+2\zeta_{13}^{5}+\zeta_{13}^{4}+\zeta_{13}^{3}+\zeta_{13}^{2}),
\\
c_{13,6,1} &= 13\, (\zeta_{13}^{11}-\zeta_{13}^{9}-\zeta_{13}^{8}-\zeta_{13}^{5}-\zeta_{13}^{4}+\zeta_{13}^{2}-1).
\end{align*}

\subsubsection{\textbf{A quadratic non-residue case}}

The following is an identity for $\mathcal{K}_{13,1}(\zeta_{13},z)$ in terms of generalized eta-functions : 
\\
\begin{align}
\mathcal{K}_{13,1}(\zeta_{13},z)&=q^\frac{1}{13}(q^{13};q^{13})_\infty \sum_{n=1}^\infty \Lpar{\sum_{k=0}^{12} N(k,13,13n-6)\,\zeta_{13}^k}q^n \label{eq:rank13id3} \\
\nonumber
\\
&=\frac{f_{13,1}(z)}{f_{13,5}(z)}\sum_{k=-1}^{1}\sum_{r=1}^{6} \left(\dfrac{\eta(13z)}{\eta(z)}\right)^{2k}c_{13,r,k}\,j(13,\pi_r(\overrightarrow{n_1}),z),
\nonumber
\end{align}
where 
\begin{align*}
\overrightarrow{n_1}=(15,-2,-3,-2,-1,-3,-2),
\end{align*}
and the coefficients are :

\begin{align*}
c_{13,1,-1} &= \zeta_{13}^{11}-\zeta_{13}^{8}-\zeta_{13}^{5}+\zeta_{13}^{2}+2,
\\
c_{13,2,-1} &= -\zeta_{13}^{11}+4\, \zeta_{13}^{10}+\zeta_{13}^{9}-4\, \zeta_{13}^{8}+3\, \zeta_{13}^{7}+3\, \zeta_{13}^{6}-4\, \zeta_{13}^{5}+\zeta_{13}^{4}+4\, \zeta_{13}^{3}-\zeta_{13}^{2}+6,
\\
c_{13,3,-1} &= 3\, \zeta_{13}^{11}+\zeta_{13}^{10}+2\, \zeta_{13}^{9}+\zeta_{13}^{8}+2\, \zeta_{13}^{7}+2\, \zeta_{13}^{6}+\zeta_{13}^{5}+2\, \zeta_{13}^{4}+\zeta_{13}^{3}+3\, \zeta_{13}^{2}+3,
\\
c_{13,4,-1} &= 2\, \zeta_{13}^{11}+4\, \zeta_{13}^{10}+3\, \zeta_{13}^{9}+2\, \zeta_{13}^{8}-\zeta_{13}^{7}-\zeta_{13}^{6}+2\, \zeta_{13}^{5}+3\, \zeta_{13}^{4}+4\, \zeta_{13}^{3}+2\, \zeta_{13}^{2}-1,
\\
c_{13,5,-1} &= -(3\, \zeta_{13}^{11}+5\, \zeta_{13}^{10}+8\, \zeta_{13}^{9}+10\, \zeta_{13}^{8}+11\, \zeta_{13}^{7}+11\, \zeta_{13}^{6}+10\, \zeta_{13}^{5}+8\, \zeta_{13}^{4}+5\, \zeta_{13}^{3}+3\, \zeta_{13}^{2}+1),
\\
c_{13,6,-1} &= \zeta_{13}^{11}-\zeta_{13}^{9}-2\, \zeta_{13}^{8}-2\, \zeta_{13}^{5}-\zeta_{13}^{4}+\zeta_{13}^{2},
\\
c_{13,1,0} &= 0,
\\
c_{13,2,0} &= 0,
\\
c_{13,3,0} &= 0,
\\
c_{13,4,0} &= 0,
\\
c_{13,5,0} &= \zeta_{13}^{11}-\zeta_{13}^{8}-\zeta_{13}^{5}+\zeta_{13}^{2}+2,
\\
c_{13,6,0} &= 0,
\\
c_{13,1,1} &= -13\, (\zeta_{13}^{11}+\zeta_{13}^{10}+ \zeta_{13}^{9}+ \zeta_{13}^{8}+ \zeta_{13}^{7}+ \zeta_{13}^{6}+ \zeta_{13}^{5}+ \zeta_{13}^{4}+ \zeta_{13}^{3}+ \zeta_{13}^{2}+ 1),
\\
c_{13,2,1} &= 13\, (-\zeta_{13}^{11}+ \zeta_{13}^{10}- \zeta_{13}^{8}-\zeta_{13}^{5}+ \zeta_{13}^{3}- \zeta_{13}^{2}+1),
\\
c_{13,3,1} &= 13\, (\zeta_{13}^{11}+ \zeta_{13}^{9}+ \zeta_{13}^{7}+ \zeta_{13}^{6}+\zeta_{13}^{4}+ \zeta_{13}^{2}+2),
\\
c_{13,4,1} &= 13\, (\zeta_{13}^{11}+ \zeta_{13}^{10}+ \zeta_{13}^{9}+ \zeta_{13}^{4}+ \zeta_{13}^{3}+ \zeta_{13}^{2}),
\\
c_{13,5,1} &= -13\, (\zeta_{13}^{10}+\zeta_{13}^{9}+ \zeta_{13}^{8}+2\, \zeta_{13}^{7}+2\, \zeta_{13}^{6}+ \zeta_{13}^{5}+ \zeta_{13}^{4}+ \zeta_{13}^{3}),
\\
c_{13,6,1} &= -13\, (\zeta_{13}^{9}+\zeta_{13}^{8}+\zeta_{13}^{5}+\zeta_{13}^{4}).
\end{align*}

Below, we present one identity for each of the quadratic residue, quadratic non-residue and $m=0$ cases for $p=17$ and  $p=19$. These are new and do not seem to appear in the literature elsewhere. \\
\subsection{Rank mod 17 identities}
\label{subsec:mod17}

\subsubsection{\textbf{Identity for $\mathcal{K}_{17,0}$}}

Let the permutation $\pi_r$ and the generalized eta function $j(z)=j(p,\overrightarrow{n},z)$ be defined as in Definition \refdef{perm} and \refdef{geneta}. The following is an identity for $\mathcal{K}_{17,0}(\zeta_{17},z)$ in terms of generalized eta-functions :
\\
\begin{align}
\mathcal{K}_{17,0}(\zeta_{17},z)&=(q^{17};q^{17})_\infty \sum_{n=1}^\infty \Lpar{\sum_{k=0}^{16} N(k,17,17n-12)\,\zeta_{17}^k}q^n \label{eq:rank17id1} \\
\nonumber
\\
\nonumber
&=\sum_{k=0}^{2}\sum_{r=1}^{8} \left(\dfrac{\eta(17z)}{\eta(z)}\right)^{3k}c_{17,r,k}\,j(17,\pi_r(\overrightarrow{n_1}),z)\quad+
\\
&\quad\sum_{k=0}^{1}\sum_{r=1}^{8} \left(\dfrac{\eta(17z)}{\eta(z)}\right)^{3k}d_{17,r,k}\,j(17,\pi_r(\overrightarrow{n_2}),z),
\nonumber
\end{align}
\\
where 
\begin{align*}
\overrightarrow{n_1}=(15,-3,-1,-2,-1,-2,-1,-2,-1)
\\
\overrightarrow{n_2}=(27,-2,-2,-3,-2,-4,-4,-4,-4),
\end{align*}
and the coefficients $c_{17,r,k}, d_{17,r,k}$, $1\leq r \leq 8, 0\leq k \leq 2$ are linear combinations of cyclotomic integers like the mod $11$ and $13$ identities found previously. We do not include them here.
\\
\subsubsection{\textbf{A quadratic residue case}}

The following is an identity for $\mathcal{K}_{17,12}(\zeta_{17},z)$ in terms of generalized eta-functions :
\\
\begin{align}
\mathcal{K}_{17,12}(\zeta_{17},z)&=q^\frac{12}{17}(q^{17};q^{17})_\infty \Bigg(\sum_{n=1}^\infty \Lpar{\sum_{k=0}^{16} N(k,17,17n)\,\zeta_{17}^k}q^n \label{eq:rank17id2} \\
&-(\zeta_{17}+\zeta_{17}^{16}-2)\,q^{0} \Phi_{17,3}(q)\Bigg)
\nonumber
\\
&=\frac{f_{17,7}(z)}{f_{17,5}(z)}\Bigg(\sum_{k=-1}^{2}\sum_{r=1}^{8} \left(\dfrac{\eta(17z)}{\eta(z)}\right)^{3k}c_{17,r,k}\,j(17,\pi_r(\overrightarrow{n_1}),z)
\nonumber
\\
&\qquad\qquad+\sum_{k=-1}^{1}\sum_{r=1}^{8} \left(\dfrac{\eta(17z)}{\eta(z)}\right)^{3k}d_{17,r,k}\,j(17,\pi_r(\overrightarrow{n_2}),z)\Bigg),
\nonumber
\\
\nonumber
\end{align}
where
\begin{align*}
&\overrightarrow{n_1}=(15,-3,-1,-2,-1,-2,-1,-2,-1),
\\
&\overrightarrow{n_2}=(27,-2,-2,-3,-2,-4,-4,-4,-4),
\end{align*}
and the coefficients $c_{17,r,k}, d_{17,r,k}$, $1\leq r \leq 8, 0\leq k \leq 2$ are linear combinations of cyclotomic integers like the identities found previously. We do not include them here. We however note that $c_{17,r,-1}=0$ for $1 \leq r \leq 8$ and $r\neq 7$. 
\\\\
\subsubsection{\textbf{A quadratic non-residue case}}
The following is an identity for $\mathcal{K}_{17,1}(\zeta_{17},z)$ in terms of generalized eta-functions :
\\
\begin{align}
\mathcal{K}_{17,1}(\zeta_{17},z)&=q^{\frac{1}{17}}(q^{17};q^{17})_\infty \sum_{n=1}^\infty \Lpar{\sum_{k=0}^{16} N(k,17,17n-11)\,\zeta_{17}^k}q^n \label{eq:rank17id3} \\
\nonumber
\\
&=\frac{f_{17,7}(z)}{f_{17,8}(z)}\Bigg(\sum_{k=0}^{2}\sum_{r=1}^{8} \left(\dfrac{\eta(17z)}{\eta(z)}\right)^{3k}c_{17,r,k}\,j(17,\pi_r(\overrightarrow{n_1}),z)
\nonumber
\\
&\hspace{18mm}+\sum_{k=-1}^{1}\sum_{r=1}^{8} \left(\dfrac{\eta(17z)}{\eta(z)}\right)^{3k}d_{17,r,k}\,j(17,\pi_r(\overrightarrow{n_2}),z)\Bigg),
\nonumber
\end{align}
where
\begin{align*}
&\overrightarrow{n_1}=(15,-3,-1,-2,-1,-2,-1,-2,-1),
\\
&\overrightarrow{n_2}=(27,-2,-2,-3,-2,-4,-4,-4,-4),\\
\end{align*}
and the coefficients $c_{17,r,k}, d_{17,r,k}$, $1\leq r \leq 8, 0\leq k \leq 2$ are linear combinations cyclotomic integers like the identities found previously. We do not include them here. We however note that $d_{17,r,-1}=0$ for $r=3,5,6$ and  $7$.\\\\

\subsection{Rank mod 19 identities}
\label{subsec:mod19}

\subsubsection{\textbf{Identity for $\mathcal{K}_{19,0}$}}

Let the permutation $\pi_r$ and the generalized eta function $j(z)=j(p,\overrightarrow{n},z)$ be defined as in Definition \refdef{perm} and \refdef{geneta}. The following is an identity for $\mathcal{K}_{19,0}(\zeta_{19},z)$ in terms of generalized eta-functions :
\\
\begin{align}
\mathcal{K}_{19,0}(\zeta_{19},z)&=(q^{19};q^{19})_\infty \sum_{n=1}^\infty \Lpar{\sum_{k=0}^{18} N(k,19,19n-15)\,\zeta_{19}^k}q^n \label{eq:rank19id1} \\
\nonumber
\\
\nonumber
&=\sum_{k=0}^{2}\sum_{r=1}^{9} \left(\dfrac{\eta(19z)}{\eta(z)}\right)^{4k}\Big(c_{19,r,k}\,j(19,\pi_r(\overrightarrow{n_1}),z)+d_{19,r,k}\,j(19,\pi_r(\overrightarrow{n_2}),z)+
\\
\nonumber
&\hspace{45mm}e_{19,r,k}\,j(19,\pi_r(\overrightarrow{n_3}),z)\Big),
\end{align}
\\
where 
\begin{align*}
\overrightarrow{n_1}=(27,-3,-2,-4,-4,-3,-3,-2,-3,-1),
\\
\overrightarrow{n_2}=(39,-5,-2,-5,-5,-3,-5,-2,-5,-5),
\\
\overrightarrow{n_3}=(39,-5,-4,-3,-4,-5,-4,-4,-5,-3),
\end{align*}
and the coefficients $c_{19,r,k}, d_{19,r,k}, e_{19,r,k}$, $1\leq r \leq 9, 0\leq k \leq 2$ are linear combinations of cyclotomic integers like the mod $11, 13$ and $17$ identities found previously. We do not include them here.
\\
\subsubsection{\textbf{A quadratic residue case}}

The following is an identity for $\mathcal{K}_{19,15}(\zeta_{19},z)$ in terms of generalized eta-functions :
\\
\begin{align}
\mathcal{K}_{19,15}(\zeta_{19},z)&=q^\frac{15}{19}(q^{19};q^{19})_\infty \Bigg(\sum_{n=1}^\infty \Lpar{\sum_{k=0}^{18} N(k,19,19n)\,\zeta_{19}^k}q^n \label{eq:rank19id2} \\
&\hspace{32mm}+(\zeta_{19}+\zeta_{19}^{18}-2)\,q^{0} \Phi_{19,3}(q)\Bigg)
\nonumber
\\
&=\frac{f_{19,6}(z)}{f_{19,5}(z)}\Bigg(\sum_{k=-1}^{2}\sum_{r=1}^{9} \left(\dfrac{\eta(19z)}{\eta(z)}\right)^{4k}\Big(c_{19,r,k}\,j(19,\pi_r(\overrightarrow{n_1}),z)+d_{19,r,k}\,j(19,\pi_r(\overrightarrow{n_2}),z)
\\
\nonumber
&\hspace{63mm}+e_{19,r,k}\,j(19,\pi_r(\overrightarrow{n_3}),z)\Big)\Bigg),
\nonumber
\\
\nonumber
\end{align}
where\\
\begin{align*}
\overrightarrow{n_1}=(39, -5, -5, -4, -5, -4, -5, -3, -5, -1),
\\
\overrightarrow{n_2}=(39, -3, -5, -5, -5, -3, -2, -4, -5, -5),
\\
\overrightarrow{n_3}=(39, -4, -5, -4, -3, -3, -4, -5, -5, -4),
\end{align*}
and the coefficients $c_{19,r,k}, d_{19,r,k}, e_{19,r,k}$, $1\leq r \leq 9, -1\leq k \leq 2$ are linear combinations of cyclotomic integers like the identities found previously. We do not include them here. We however note that $c_{19,r,2}=0$ for $1 \leq r \leq 9$, $d_{19,r,-1}=0$ for $r=1,2,3,5,6,7,8,9$, and $d_{19,r,0}=0$ for $r=1,2,5,8,9$. 
\\\\
\subsubsection{\textbf{A quadratic non-residue case}}
The following is an identity for $\mathcal{K}_{19,1}(\zeta_{19},z)$ in terms of generalized eta-functions :
\\
\begin{align}
\mathcal{K}_{19,1}(\zeta_{19},z)&=q^{\frac{1}{19}}(q^{19};q^{19})_\infty \sum_{n=1}^\infty \Lpar{\sum_{k=0}^{18} N(k,19,19n-14)\,\zeta_{19}^k}q^n \label{eq:rank19id3} \\
\nonumber
\\
&=\frac{f_{19,8}(z)}{f_{19,9}(z)}\Bigg(\sum_{k=-1}^{2}\sum_{r=1}^{9} \left(\dfrac{\eta(19z)}{\eta(z)}\right)^{4k}\Big(c_{19,r,k}\,j(19,\pi_r(\overrightarrow{n_1}),z)+d_{19,r,k}\,j(19,\pi_r(\overrightarrow{n_2}),z)
\\
\nonumber
&\hspace{63mm}+e_{19,r,k}\,j(19,\pi_r(\overrightarrow{n_3}),z)\Big)\Bigg),
\nonumber
\\
\nonumber
\end{align}
where\\
\begin{align*}
\overrightarrow{n_1}=(39, -5, -5, -4, -5, -4, -5, -3, -5, -1),
\\
\overrightarrow{n_2}=(39, -3, -5, -5, -5, -3, -2, -4, -5, -5),
\\
\overrightarrow{n_3}=(39, -4, -5, -4, -3, -3, -4, -5, -5, -4),\\
\end{align*}
and the coefficients $c_{19,r,k}, d_{19,r,k}, e_{19,r,k}$, $1\leq r \leq 9, -1\leq k \leq 2$ are linear combinations of cyclotomic integers like the identities found previously. We do not include them here. We however note that $c_{19,r,2}=0$ for $1 \leq r \leq 9$, $d_{19,r,-1}=0$ for $1 \leq r \leq 9$, and $d_{19,r,0}=0$ for $r=1,2,4,5,6,7,9$.

\section{Concluding remarks}
\label{sec:conclusion}
In this paper we have found new symmetries for Dyson's rank function.
As well we have extended the work of \cite{Ga19a} and found explicit 
$p$-dissection identities that generalize Ramanujan's result \eqn{Ramid5}
to the cases $p=13$, $17$ and $19$.  It is a non-trivial problem to
find the generalized eta-quotients that are needed in these identities.
What helps is knowing lower bounds for orders at cusps. Our approach
has been by a computer search. It would be interesting to find more
exact conditions for the generalized eta-quotients involved and prove
that identities of this type persist for all larger primes $p$.
The next case to consider is $p=23$.  Another problem to consider
is extending the results of this paper to other rank type functions.
For example investigate whether there are similar symmetry type
results for the $M_2$-rank \cite{Be-Ga02} and for the
overpartition rank \cite{Lo2005}.

\newpage
\bibliographystyle{amsplain}


\end{document}